\documentclass[12pt,twoside]{amsart}
\usepackage{mabliautoref}
\usepackage{amssymb,amsthm,amsmath}
\usepackage{stmaryrd}
\RequirePackage[dvipsnames,usenames]{xcolor}
\usepackage{hyperref}
\usepackage{mathtools}
\usepackage{comment}
\usepackage[abbrev,alphabetic]{amsrefs}
\usepackage[all]{xy}
\usepackage{tikz}
\usepackage{tikz-cd}

\usepackage{kantlipsum} 

\setlength{\textwidth}{\paperwidth}
\addtolength{\textwidth}{-2in}
\calclayout

\hypersetup{
	bookmarksdepth=3,
	bookmarksopen,
	bookmarksnumbered,
	pdfstartview=FitH,
	colorlinks,
	linkcolor=Sepia,
	anchorcolor=BurntOrange,
	citecolor=MidnightBlue,
	citecolor=OliveGreen,
	filecolor=BlueViolet,
	menucolor=Yellow,
	urlcolor=OliveGreen
}



\newcommand{\bR}{\mathbf{R}}
\newcommand{\bQ}{\mathbf{Q}}
\newcommand{\bZ}{\mathbf{Z}}
\newcommand{\bN}{\mathbf{N}}
\newcommand{\bP}{\mathbf{P}}

\newcommand{\bB}{\mathbf{B}}

\newcommand{\cB}{\mathcal{B}}

\newcommand{\cJ}{\mathcal{J}}
\newcommand{\cO}{\mathcal{O}}
\newcommand{\cY}{\mathcal{Y}}
\newcommand{\cX}{\mathcal{X}}
\newcommand{\cC}{\mathcal{C}}
\newcommand{\cZ}{\mathcal{Z}}
\newcommand{\cS}{\mathcal{S}}

\newcommand{\frX}{\mathfrak{X}}
\newcommand{\frY}{\mathfrak{Y}}

\newcommand{\frL}{\mathfrak{L}}

\newcommand{\supp}{\mathrm{Supp}}

\newcommand{\lct}{\textup{lct}}
\newcommand{\im}{\textup{Im}}
\newcommand{\codim}{\textup{codim}}

\newcommand{\Exc}{\textup{Exc}}

\newcommand{\coeff}{\textup{coeff}}

\newcommand{\charac}{\textup{char}}

\newcommand{\Can}{\textup{c}}
\newcommand{\Min}{\textup{m}}
\newcommand{\Amp}{\textup{a}}
\newcommand{\reg}{\textup{reg}}
\newcommand{\sing}{\textup{sing}}
\newcommand{\res}{\textup{res}}
\newcommand{\m}{\textup{m}}
\newcommand{\vvert}[1]{\lVert #1 \rVert}

\makeatletter
\newcommand{\xdashrightarrow}[2][]{\ext@arrow 0359\rightarrowfill@@{#1}{#2}}
\def\arrowfill@@#1#2#3#4{%
  $\m@th\thickmuskip0mu\medmuskip\thickmuskip\thinmuskip\thickmuskip
   \relax#4#1
   \xleaders\hbox{$#4#2$}\hfill
   #3$%
}
\makeatother


\DeclareMathOperator{\Chow}{Chow}
\DeclareMathOperator{\Univ}{Univ}
\DeclareMathOperator{\vol}{vol}
\DeclareMathOperator{\Spec}{Spec}
\DeclareMathOperator{\Proj}{Proj}
\DeclareMathOperator{\Spf}{Spf}
\DeclareMathOperator{\relProj}{\textbf{\textup{Proj}}}



\numberwithin{equation}{section}



\begin{document}

\title{Invariance of plurigenera and good minimal models}

\author{Iacopo Brivio}
\address{Center of Mathematical Sciences and Applications, Harvard University, 20 Garden St, Cambridge, MA 02138, USA}
\email{ibrivio@cmsa.fas.harvard.edu}






\begin{abstract}
    Nakayama showed that deformation invariance of plurigenera for smooth complex varieties follows from the MMP and Abundance Conjectures. We generalize his result to families of singular pairs over DVRs of positive or mixed characteristic. As an application we show invariance of plurigenera for deformations of good minimal models of general type varieties whose canonical model has rational singularities as well as boundedness of Gorenstein canonical models of dimension three and fixed volume over an algebraically closed field of characteristic $p>5$.
\end{abstract}

\subjclass[2020]{14E30, 14G05, 14G17, 14J30, 14J40}
\keywords{Invariance of plurigenera, good minimal models, positive and mixed characteristic,
boundedness, rational points.}

\maketitle

\setcounter{tocdepth}{1}
\tableofcontents


\section{Introduction}

Given a 1-parameter smooth projective family of complex varieties $\pi\colon X\to C$, it was shown by Siu (\cites{Siu_IPGT,Siu_IPNGT}) that the plurigenera of the fibers $h^0(X_c,mK_{X_c})$ are independent of $c\in C$ for all $m\geq 0$. Equivalently, every $m$-pluricanonical form on a fiber can be extended to an $m$-pluricanonical form on $X$. Siu's result, and the techniques introduced in its proof, have had a great impact in the development of higher-dimensional complex algebraic geometry. Notable examples include the proofs of finite generation of the canonical ring (\cites{BCHM,Siu_FGCRAM}) and of boundedness of moduli of general type varieties (\cite{HMX}). It is worth noting that Siu's proof uses analytic techniques which have no algebraic analogue, except for the case when $\pi\colon X\to C$ is a family of complex manifolds of general type (\cite{KawDCS}). On the other hand, Nakayama showed that invariance of plurigenera for smooth complex varieties follows from the MMP and Abundance Conjectures (\cite{Nak}).

In this paper we present a generalization of Nakayama's result to families of singular pairs over bases of any characteristic (zero, positive, or mixed).

\begin{theorem}\label{t-MMP+ABU=>AIP/p (charfree)}
    Assume the Minimal Model Program and the Abundance Conjectures (\autoref{c-MMP} and \autoref{c-ABU}). Let $S$ be either an integral smooth affine curve over a field of characteristic zero or the spectrum of a DVR of positive or mixed characteristic, and let $s,\eta\in S$ be a closed point and the generic point, respectively. Let $\pi\colon (X,\Delta)\to S$ be a projective family of klt pairs such that, if $V\subset \mathbf{B}_-(K_X+\Delta)$ is a non-canonical center of $(X,\Delta+X_s)$, then $V$ is $\pi$-horizontal. Then:
    \begin{itemize}
        \item If $S$ is of equicharacteristic zero, then the restriction map
        $$H^0(X,m(K_X+\Delta))\to H^0(X_s,m(K_{X_s}+\Delta_s))$$
        is surjective for all $m\geq 0$ divisible enough. 
        \item If $S$ is of equicharacteristic $p>0$, or mixed characteristic $(0,p>0)$, then there exists an $e\geq 0$ such that, for all $m\geq 0$ divisible enough, the subspace
        $$H^0(X_s,m(K_{X_s}+\Delta_s))^{p^e}\coloneqq \lbrace s^{p^e} \vert s\in H^0(X_s,m(K_{X_s}+\Delta_s))\rbrace $$
        is contained in the image of the restriction map
        $$H^0(X,mp^e(K_X+\Delta))\to H^0(X_s,mp^e(K_{X_s}+\Delta_s)).$$
    \end{itemize}
    In particular, in both cases, we have $\kappa(X_s,K_{X_s}+\Delta_s)=\kappa(X_\eta,K_{X_\eta}+\Delta_\eta)$.
\end{theorem}

This result is sharp. The condition on the non-canonical centers cannot be dropped, even for families of complex varieties, as shown in \cite{Kaw_OEPPF}*{Example 4.3}. As for positive and mixed characteristic bases, we show in \autoref{s-example} that \autoref{t-MMP+ABU=>AIP/p (charfree)} is the best one can hope for, even for projective families of smooth good minimal models.

In order to obtain positive and mixed characteristic analogues of Siu's theorem, some additional conditions need to be imposed. Previous results (\cites{EH,BBS}) established invariance of plurigenera for families of surface pairs $\pi\colon (X,\Delta)\to S$ which are either of general type or of Kodaira dimension one and $\Delta_s$ is big over $\Proj (R(X_s,K_{X_s}+\Delta_s))$. The following is a higher-dimensional generalization of the aforementioned results.

\begin{theorem}\label{t-extension_R^1_vanishing}
    Let $R$ be either a local Artin ring or a DVR, and suppose the closed point $s\in \Spec(R)$ is perfect of characteristic $p>0$. Let $\pi\colon (X,\Delta)\to\Spec(R)$ be either an infinitesimal deformation\footnote{See \autoref{d-infinitesimal_deformation_pair}} of the pair $(X_s,\Delta_s)$ or a projective family of pairs, respectively. Assume that $K_{X_s}+\Delta_s$ is semiample and that its semiample contraction $f_s\colon X_s\to Y_s$ satisfies $R^1f_{s,*}\cO_{X_s}=0$. Then the restriction map
    \[
    H^0(X,m(K_{X/R}+\Delta))\to H^0(X_s,m(K_{X_s}+\Delta_s))
    \]
    is surjective for all $m\in m_0\bN$, where $m_0$ depends just on $(X_s,\Delta_s)$.
    In particular, this holds when 
    \begin{enumerate}
        \item $K_{X_s}+\Delta_s$ is $\bZ_{(p)}$-Cartier,  $f_s$ is induced by $n(K_{X_s}+\Delta_s)$ for some $n\geq 1$ not divisible by $p$, $\Delta_s$ is $f_s$-ample, $Y_s$ is smooth, and $f_s$ is locally $F$-split\footnote{See \autoref{s-Fsplit}.}; or
        \item $K_{X_s}+\Delta_s$ is big, and $Y_s$ has rational singularities.
    \end{enumerate}
\end{theorem}

\begin{corollary}\label{c-extension_gt_3folds}
    Let $S$ be the spectrum of a DVR with perfect closed point $s$ of characteristic $p>5$. Let $\pi\colon (X,\Delta)\to S$ be a projective family of klt threefold pairs, such that $\Delta$ has standard coefficients, $X$ is $\bQ$-factorial, and $(X_s,\Delta_s)$ is of general type. When $S$ is of equicharacteristic $p$, we also assume $S$ is the spectrum of the complete local ring of a smooth curve $C$ and $(X,\Delta)=(\cX,\Phi)\times_C S$ for some $\bQ$-factorial four-dimensional klt pair $(\cX, \Phi)$, which is projective over $C$. Further, suppose that log resolutions of all log pairs with the underlying variety being birational to $X$ exist (and are constructed by a sequence of blow-ups along the non-snc locus). Lastly, assume that if $V\subset \mathbf{B}_-(K_X+\Delta)$ is a non-canonical center of $(X,\Delta+X_s)$, then $V$ is $\pi$-horizontal. Then the restriction map
    \[
    H^0(X,m(K_X+\Delta))\to H^0(X_s,m(K_{X_s}+\Delta_s))
    \]
    is surjective for all $m\geq 0$ divisible enough.
\end{corollary}

Recall that a scheme $Y$ is said to have \textit{rational singularities} if 
\begin{itemize}
    \item it is excellent, Cohen-Macaulay, normal, with dualizing complex; and
    \item for all locally projective birational morphisms $f\colon X\to Y$ with $X$ an excellent, Cohen-Macaulay, normal scheme the map $\cO_Y\to Rf_*\cO_X$ is a quasi-isomorphism.
\end{itemize}
Strongly $F$-regular (and more generally $F$-rational) singularities are rational by \cite{Smith}. We point out that $n$-dimensional klt singularities in characteristic $p>0$ are expected to be rational when $p$ is large with respect to $n$, while in characteristic zero klt implies rational. For 3-folds over perfect fields, we know this is the case as soon as $p>5$ by \cite{ABL}*{Corollary 1.3}. Combining \autoref{t-extension_R^1_vanishing} with the results in \cites{DW,zhang} we obtain the following boundedness result.

\begin{theorem}\label{t-bounded_3fold_canonical_models'}
Let $k$ be a field of characteristic $p>5$, let $v\in\bQ_{>0}$, and let $\cC_v(k)$ be the set of all three-dimensional varieties $X$ over $k$ such that $K_X$ is ample, $X_{\bar{k}}$ has canonical singularities, $\vol(K_X)=v$, and either $h^1(X,\cO_X)\neq 0$ or $X_{\bar{k}}$ has a Gorenstein terminalization. Then $\cC_v(k)$ is bounded.
\end{theorem}

\autoref{t-bounded_3fold_canonical_models'} has the following arithmetic application.

\begin{corollary}\label{c-rational_points}
    With notation as in \autoref{t-bounded_3fold_canonical_models'}, there exists a positive integer $d$, depending only on $k$ and $v$ such that for all $X\in \cC_v(k)$ there exists an extension $K\supset k$ with $[K:k]\leq d$ and $X(K)\neq \emptyset$. 
\end{corollary}

The paper is organized as follows. \autoref{s-preliminaries} is mostly a miscellaneous recap of fairly well-known notions (Frobenii and universal homeomorphisms, pairs and their singularities, the Minimal Model Program and Abundance Conjectures, and boundedness). The main novelty is the notion of $TNR_1$ pair (\autoref{d-TNR_1pair}), a special class of non-normal pairs which appears naturally in positive and mixed characteristic. The theme of \autoref{s-relative_iitaka_fibration} is understanding to what extent does forming the Iitaka fibration of a semiample divisor on a family of normal varieties commute with restriction to a closed fiber. In general, Iitaka fibration and restriction do not commute with each other, however we can understand this failure in terms of the Stein factorization of the restriction (\autoref{l-finitepartSFconnfibisUH}, \autoref{l-stein_invariance+}). In equicharacteristic zero we answer this question affirmatively in the case of the log canonical divisor of a family of klt pairs (\autoref{t-normalimageK_X+Dnonpositivefibration}). Under a suitable vanishing hypothesis we can show an even stronger characteristic-free result, which allows us to lift semiampleness from the central fiber of an infinitesimal deformation (\autoref{t-formallift_(X,L)_Lsa}). Similarly to \autoref{s-relative_iitaka_fibration}, \autoref{s-MMPfam} studies to what extent does running the MMP in a family of pairs commute with restriction to a closed fiber. The main result, \autoref{t-MMP-topfamilies}, answers this question affirmatively in the case of a family of characteristic zero klt pairs, while in general this holds only ``up to universal homeomorphism''. \autoref{s-proofs} contains the proofs of \autoref{t-MMP+ABU=>AIP/p (charfree)}, \autoref{t-extension_R^1_vanishing}, \autoref{c-extension_gt_3folds}, \autoref{t-bounded_3fold_canonical_models'}, and \autoref{c-rational_points}. \autoref{s-plurigenera_smooth_complex_varieties} is independent from the rest of the paper; there we show that, for projective families of smooth complex varieties, invariance of plurigenera is equivalent to ``asymptotic'' invariance of plurigenera (i.e. $h^0(X_c,mK_{X_c})$ being independent of $c$ for all $m\geq 0$ divisible enough). Lastly, in \autoref{s-example} we present an example of a positive/mixed characteristic projective family of smooth 3-folds violating asymptotic invariance of plurigenera.

\subsection{Proof(s) outline} The proof idea of \autoref{t-MMP+ABU=>AIP/p (charfree)} is fairly simple. Suppose $X_0$ is a smooth projective variety over a field $k$ such that $|K_{X_0}|_{\bQ}\neq \emptyset$. If we assume the MMP and Abundance Conjectures, then we have maps
\begin{center}
    \begin{tikzcd}
        X_0\arrow[r,"\varrho_0",dashed] & X_0^\Min\arrow[r,"g_0"] & X_0^\Amp,
    \end{tikzcd}
\end{center}
where $\varrho_0$ is a (good) minimal model, and $g_0$ is the ample model of $X_0^\Min$. The key feature of this construction is that on $X_0^\Amp$ there is an ample $\bQ$-Cartier $\bQ$-divisor $A_0$ such that $h^0(X_0,mK_{X_0})=h^0(X_0^\Amp,mA_0)$ for all $m\geq 0$ divisible enough. As sections of sufficiently ample line bundles can always be extended from a closed fiber to the total space, simply by Serre vanishing, the upshot is now to show that this construction can be performed in a family. That is, if $\pi\colon X\to C$ is, say, a projective family of smooth varieties over a regular one-dimensional scheme, and 
\begin{center}
    \begin{tikzcd}
        X\arrow[r,"\varrho",dashed] & X^\Min\arrow[r,"g"] & X^\Amp
    \end{tikzcd}
\end{center}
are the minimal, resp., ample model of $X$ \textit{over C}, we want to show that the base change of the above chain of maps to any closed point $c\in C$
\begin{center}
    \begin{tikzcd}
        X_c\arrow[r,"\varrho_c",dashed] & X_c^\Min\arrow[r,"g_c"] & X_c^\Amp
    \end{tikzcd}
\end{center}
yields the minimal, resp., ample model of $X_c$. As mentioned in the introduction, this can be arranged in equicharacteristic zero, but it fails over bases of positive or mixed characteristic. However, this compatibility with base change always holds up to universal homeomorphism and, since universal homeomorphisms in characteristic $p>0$ always factor a (power of) Frobenius, that is why we can always extend sections up to a \textit{fixed} $p^{\textup{th}}$-power.

\autoref{t-extension_R^1_vanishing} is a straightforward consequences of an abstract extension result, \autoref{t-formallift_(X,L)_Lsa}, plus the Kodaira vanishing theorem for $F$-split varieties. \autoref{c-extension_gt_3folds} follows from the same strategy of \autoref{t-MMP+ABU=>AIP/p (charfree)} combined with \autoref{t-extension_R^1_vanishing} and results from \cite{ABL} and \cite{HW}.

The proof of \autoref{t-bounded_3fold_canonical_models'} follows the same ideas as the characteristic zero case (e.g. \cite{HMX_surv}*{1.3}), although some steps become a bit more technical due to the failure of generic smoothness. Lastly, \autoref{c-rational_points} is a straightforward consequence of boundedness.\\

\noindent
\textbf{Acknowledgments}: I would like to thank my PhD advisor, Prof. James M\textsuperscript{c}Kernan, for his support and guidance. I have also greatly benefitted from discussions with Prof. Jungkai A. Chen, Fabio Bernasconi, Justin Lacini, and Liam Stigant to whom I extend my gratitude. The author has been supported by NSF research grants no: 1265263 and no: 1802460, by a grant from the Simons Foundation \# 409187, by the National Center for Theoretical Sciences, a grant from the Ministry of Science and Technology; grant number MOST-110-2123-M-002-005, and by the Center of Mathematical Sciences and Applications.


\section{Preliminaries}\label{s-preliminaries}


\subsection{Notation and conventions}

All of our schemes will be separated and of finite type over a field or a DVR. We will always use $S$ to denote either the spectrum of a DVR of positive or mixed characteristic, or an integral smooth affine curve over a field of characteristic zero. We will denote by $s,\eta\in S$ a closed point and the generic point, respectively. When working in positive characteristic, we will always assume the base field to be $F$-finite.  In practice, almost all of the schemes we will deal with will be integral, $R_1$, and often (albeit not always) $S_2$. For any scheme $X$ we will denote by $X_\reg\subset X$ the \textit{regular locus of $X$} and by $X_{\sing}\coloneqq X\setminus X_{\reg}$ the \textit{singular locus of $X$}. If $X$ is pure-dimensional, an open subset $U\subset X$ is said to be \textit{big} if $\codim_X(X\setminus U)\geq 2$. The normalization morphism will be denoted by $\nu\colon X^\nu\to X$. If $D$ is a $\bQ$-divisor on $X$ such that no codimension one component of $X_{\sing}$ is contained in $\supp(D)$, we denote by $D^\nu$ the divisorial part of $\nu^{-1}D$. Note that if $mD$ is Cartier then $D^\nu=\nu^*(mD)/m$. A morphism of schemes $f\colon X\to Y$ is a \textit{contraction} if it is proper and $f_*\cO_X=\cO_Y$. In particular, contractions have geometrically connected fibers. Most of the times our $X$ will be a normal scheme, in which case $Y$ will be normal as well. If $\pi\colon X\to C$ is a proper morphism of schemes with $\dim(C)=1$, and $Z\subset X$ is a closed subset, we say $Z$ is \textit{$\pi$-horizontal} (resp. \textit{$\pi$-vertical}) if $\pi(Z)=C$ (resp. if $\pi(Z)\subsetneq C$). If $f\colon X\to Y$ and $Z\to Y$ are morphisms of schemes, we will denote by $f_Z\colon X_Z\coloneqq X\times_Y Z\to Z$ the morphism induced by base change.


\subsection{\texorpdfstring{$F$}{F}-split morphisms}\label{s-Fsplit}

Let $k$ be a field of characteristic $p>0$ and let $X$ be a $k$-scheme. We denote by $F_X^e\colon X^{e}\to X$ the ($e^{\textup{th}}$-power of the) \textit{absolute Frobenius of X}. This morphism is the identity on topological spaces and raises functions to the $(p^e)^{\textup{th}}$-power. Note that $X$ and $X^e$ are abstractly isomorphic as schemes, but not as $k$-schemes, in general. Let now $f\colon X\to Z$ be a morphisms of $k$-schemes. Then we have the following commutative diagram for all $e\geq 1$

\begin{equation}\label{e-relfrob}
    \begin{tikzcd}
        X^e \arrow[ddrr, "f^e", bend right=20] \arrow[drr, "F^{e}_{X/Z}"] \arrow[drrrr, bend left=20, "F^{e}_{Z}"] & &  \\
         & &X_{Z^e} \arrow[d, "f_{Z^e}"] \arrow[rr, "(F^{e}_{Z})_{X}"] & & X \arrow[d, "f"]\\
         & & Z^e \arrow[rr, "F^{e}_{Z}"] & & Z.
    \end{tikzcd}
\end{equation}

The morphism $F^e_{X/Z}$ is the \textit{relative Frobenius of $X$ over $Z$} (or \textit{$Z$-linear Frobenius}).

\begin{definition}
    The morphism $f\colon X\to Z$ is \textit{$F$-split} if the natural map $\cO_{X_{Z^e}}\to F^e_{X/Z,*}\cO_{X^e}$ is split in the category of $\cO_{X_{Z^e}}$-modules, for some $e\geq 1$. When $Z=\Spec(k)$ for a perfect field $k$, we say $X$ is $F$-split. Note that this is equivalent to asking the map $\cO_{X}\to F^e_{X,*}\cO_{X^e}$ to be split in the category of $\cO_{X}$-modules.
\end{definition}


\subsection{Universal homeomorphisms}

A morphism of schemes $f\colon X\to Y$ is a \textit{universal homeomorphism} if, for all $Y'\to Y$, the base change $f_{Y'}\colon X_{Y'} \to Y'$ is a homeomorphism. It is known that a finite morphism of $k$-schemes is a universal homeomorphism if and only if it factors a power of the $k$-linear Frobenius morphism (\cite{Kol_QuotSpaces}*{Proposition 6.6}). As a consequence we have the following.

\begin{lemma}[{\cite[Lemma 1.4]{Keel}}]\label{l-keelfrobeniusdescent}
    Let $h\colon Z\to Y$ be a finite universal homeomorphism between schemes over a field of characteristic $p > 0$. Then there exists $q=p^e$ such that, whenever $L$ is a line bundle on $Y$ and $\sigma$ is a global section of $h^*L$ on $Z$, then $\sigma^q$ descends to $Y$; that is, there exists a global section $\tau$ of $L^q$ on $Y$ such that $\sigma^q=h^*\tau$.
\end{lemma}

\begin{corollary}\label{c-QcartsmallUH}
    Let $h\colon Z\to Y$ be a finite universal homeomorphism between integral $R_1$ schemes over a field of characteristic $p > 0$. Let $D$ be a $\bQ$-divisor on $Y$ and let $D_Z$ be the divisorial part of $h^{-1}D$. Then $D$ is $\bQ$-Cartier if and only if $D_Z$ is $\bQ$-Cartier.
\end{corollary}

\begin{proof}
    If $mD$ is Cartier then clearly so is $mD_Z=h^*(mD)$. Suppose now that $mD_Z$ is Cartier and let $f$ be a local equation. By \autoref{l-keelfrobeniusdescent} we know that $f^{p^e}=h^*g$ for some rational function on $X$ and some $e\geq 0$. Again, $h$ is an isomorphism in codimension one, hence the divisor defined by $g$ coincides with $mp^eD$, thus $D$ is $\bQ$-Cartier.
\end{proof}

Universal homeomorphisms appear naturally in the Stein factorization of morphisms with connected fibers.

\begin{lemma}\label{l-finitepartSFconnfibisUH}
    Let $k$ be any field and let $f\colon X\to Y$ be a proper morphism of integral $k$-schemes with connected geometric fibers. Let
    \[
    f\colon X\xrightarrow{g} Z\xrightarrow{h}Y
    \]
    be its Stein facorization. Then $h$ is a finite universal homeomorphism. If $X$ is normal and either $\charac(k)=0$ or $f$ is birational, then $h$ is an isomorphism if and only if $Y$ is normal.
\end{lemma}

\begin{proof}
     By \cite[Corollaire 18.12.11]{EGAIV_4}, in order to show that $h$ is a universal homeomorphism it is enough to check that $h$ is radicial, that is for every algebraically closed field $K\supset k$ the map $Z(K)\to Y(K)$ is injective (see \cite[Definition 3.5.4]{EGA1}). The morphism $\Spec(K)\to \Spec (k)$ is flat, hence 
    \[f_{K}\colon X_{K}\xrightarrow{g_{K}}Z_{K}\xrightarrow{h_{K}}Y_{K}\]
    is the Stein factorization, as its formation commutes with flat base change (\cite[Tag 03GX]{SP}). As $f_{K}$ has connected geometric fibers, $Z(K)\to Y(K)$ is injective. 

    Suppose now that $X$ is normal and that either $\charac(k)=0$ or $f$ is birational. Then $Z$ is normal too and in both cases we have that $h$ is a finite birational universal homeomorphism. We conclude by Zariski's Main Theorem (\cite[Lemme 8.12.10.I]{EGAIV_3}).
\end{proof}

\begin{definition}
    A finite universal homeomorphism of schemes is \textit{small} if it is an isomorphism in codimension one.
\end{definition}

\begin{proposition}\label{p-AmpleUH}
    Let $h\colon X\to Y$ be a finite universal homeomorphism of proper $k$-schemes, and let $A$ be an ample Cartier divisor on $Y$. Then $h$ is an isomorphism if and only if $h^*\colon H^0(Y,mA)\to H^0(X,h^*(mA))$ is an isomorphism for all $m\geq 0$.
\end{proposition}
\begin{proof}
    We have a short exact sequence
    \[
    0\to \cO_Y(mA)\to h_*\cO_X(h^*(mA))\to Q(mA)\to 0,
    \]
    where $Q=0$ if and only if $h$ is an isomorphism. For $m$ large enough, all the above sheaves will be generated by global sections and have vanishing cohomology in positive degree. Hence the claim follows by taking global sections.
\end{proof}


\subsection{Varieties, pairs, and their families}

\begin{definition}
Let $B$ be a regular integral affine scheme, and let $X$ be an $R_1$ scheme, quasi-projective over $B$. Let $j\colon X_{\reg}\to X$ be the natural inclusion, let $i\colon X_\reg\hookrightarrow\bP^N_B$ be a locally closed embedding, and let $I_X$ be the defining ideal of the closure of $i(X_\reg)$. Then we set
\[
\omega_{X_\reg/B}\coloneqq i^*(\omega_{\bP^N_B/B}\otimes (\det(I_{X}/I_{X}^2))^\vee).
\]
The \textit{canonical sheaf of $X$ over $B$} is then 
\[
\omega_{X/B}\coloneqq j_*\omega_{X_\reg/B}.
\]
Note that $\omega_{X/B}$ is a torsion-free $S_2$ sheaf of rank one by construction. The corresponding linear equivalence class of (Weil) divisor is denoted by $K_{X/B}$ and it is called the \textit{canonical divisor of $X$ over $B$}. In particular, when $X$ is normal, $\omega_{X/B}$ is reflexive. In this paper $B$ will typically be the spectrum of a field or a DVR, hence we will usually drop the ``$/B$'' from $\omega_{X/B}$ and $K_{X/B}$.
\end{definition}

\begin{definition}
    Let $k$ be any field. A \textit{k-variety} is a geometrically integral scheme which is quasi-projective over $k$. A \textit{topologically normal $R_1$} ($TNR_1$ from now on) \textit{$k$-variety} is a $k$-variety $X$ which is $R_1$ and whose normalization $\nu\colon X^\nu\to X$ is a small universal homeomorphism. Note that $\nu$ can also be described as the demi-normalization (or $S_2$-ification) of $X$, as in \cite[Definition 5.1]{Kol_SMMP}. We will drop the ``$k$'' when the base field is clear.
\end{definition}

\begin{remark}\label{r-advantage_variety}
    Our definition of variety has the advantage of being stable under field extension. This is particularly useful when the base field is imperfect. In particular, a $k$-variety $X$ will always be generically \textit{smooth} over $k$. Another advantage is the following: suppose that $\pi\colon X\to C$ is a flat contraction from an integral normal scheme to a regular one-dimensional scheme, where the fibers are $TNR_1$ varieties. Then there exists a big regular open subset $U\subset X$ such that $U\to C$ is surjective, $\codim_{X_c}(X_c\setminus U_c)\geq 2$ for all $c\in C$, and the induced morphism $\pi|_U\colon U\to C$ has regular fibers. In particular, $K_{X/C}\vert_{X_c}=K_{X_c}$ for all  $c\in C$. On the other hand, even when $f\colon X\to Y$ is a morphism of \textit{smooth} varities over a perfect field of characteristic $p>0$, it can happen that a general fiber $X_y$ (or the generic fiber $X_\eta$) is not a variety. Compare with \cite[Definition 2.1]{DW}.
\end{remark}

\begin{definition}
    Let $B$ be a regular integral affine scheme. We call $(X,\Delta)$ a \textit{pair over B} if 
    \begin{itemize}
        \item $X$ is an integral, normal, quasi-projective $B$-scheme, and
        \item $\Delta$ is an effective $\bQ$-divisor such that $K_{X/B}+\Delta$ is $\bQ$-Cartier.
    \end{itemize}
    When $B$ is the spectrum of a field, we will additionally require $X$ to be geometrically integral. As usual, we will drop the ``over $B$'' when it is clear from context.
\end{definition}

The following notion arises naturally when working in positive or mixed characteristic.

\begin{definition}\label{d-TNR_1pair}
    Let $B$ be a regular integral affine scheme. We call $(X,\Delta)$ a \textit{$TNR_1$ pair over $B$} if $X$ is an $R_1$ integral scheme whose normalization $\nu\colon X^\nu\to X$ is a small universal homeomorphism, and an effective $\bQ$-divisor $\Delta$ such that $K_{X/B}+\Delta$ is $\bQ$-Cartier. If $B$ is the spectrum of a field we additionally require $X$ to be geometrically integral.
\end{definition}

\begin{remark}\label{r-crepant_pullback_top_R1}
    Let $(X,\Delta)$ be a $TNR_1$ pair over $B$, and let $\nu\colon X^\nu\to X$ be the normalization. It follows from the definition that $(X^\nu,\Delta^\nu)$ is a pair over $B$. Conversely, if $(Y,\Theta)$ is a pair over a positive characteristic field $k$, and $h\colon Y\to \bar{Y}$ is a small birational universal homeomorphism, then $(\bar{Y},\bar{\Theta}\coloneqq h_*\Theta)$ is a $TNR_1$ pair over $k$, by \autoref{c-QcartsmallUH}.
\end{remark}

\begin{definition}
    Let $(X,\Delta)$ be a $TNR_1$ pair over $B$ such that $m(K_X+\Delta)$ is Cartier, let $f\colon Y\to X$ be a proper birational morphism from a normal integral scheme, and let $\lbrace E_i\rbrace_i$ be the set of $f$-exceptional divisors, so that we have an isomorphism of invertible sheaves
    \begin{displaymath}
    \omega_{Y/B}^{[m]}(mf_*^{-1}\Delta)\cong \left(f^*\omega_{X/B}^{[m]}(m\Delta)\right)\left( \sum_ima(E_i,X,\Delta)E_i\right),
    \end{displaymath}
    for some rational numbers $a(E_i,X,\Delta)$. We call $a(E_i,X,\Delta)$ the \textit{discrepancy of $E_i$ with respect to $(X,\Delta)$}. If $D\subset X$ is a prime divisor, we define $a(D,X,\Delta)\coloneqq -\coeff_D(\Delta)$. Discrepancies are a measure of the singularities of $(X,\Delta)$. When $\Delta$ is a boundary we say that $(X,\Delta)$ is 
    \begin{itemize}
        \item terminal if $a(E,X,\Delta)>0$ for every exceptional $E$;
        \item canonical if $a(E,X,\Delta)\geq 0$ for every exceptional $E$;
        \item klt if $a(E,X,\Delta)>-1$ for every $E$;
        \item plt if $a(E,X,\Delta)>-1$ for every exceptional $E$;
        \item dlt if $a(E,X,\Delta)>-1$ for every exceptional $E$ with $\textup{center}_X(E)\subset \textup{non-snc}(X,\Delta)$;
        \item lc if $a(E,X,\Delta)\geq -1$ for every $E$.
    \end{itemize}
    Here $\textup{center}_X(E)$ denotes the image of $E$ in $X$ and $\textup{non-snc}(X,\Delta)$ denotes the non simple normal crossing locus of $(X,\Delta)$. For future reference, we remark that this definition of discrepancy (and the related classes of singularities) is consistent with the discrepancy for non-normal pairs as defined in \cite[Remark 2.6]{FA}. If $V=\textup{center}_X(E)$ for some divisor with discrepancy $<0$, we call $V$ a \textit{non-canonical center of $(X,\Delta)$}.
\end{definition}

\begin{definition}
    Let $C$ be a regular integral affine scheme of dimension one, and let $(X,\Delta)$ be a pair over $C$. We say $\pi\colon (X,\Delta)\to C$ is a \textit{projective family of pairs} if:
    \begin{itemize}
        \item $\pi$ is a projective contraction whose fibers are normal varieties (in particular $\pi$ is flat);
        \item $\pi|_{\Delta_i}\colon\Delta_i\to C$ is flat for every irreducible component $\Delta_i$ of $\supp(\Delta)$.
    \end{itemize}
    In particular, $(X_c,\Delta_c)$ is a pair for all $c\in C$.
    
    We say $\pi\colon (X,\Delta)\to C$ is a \emph{projective family of $TNR_1$ pairs} if:
    \begin{itemize}
        \item $\pi$ is a projective contraction, whose fibers are $TNR_1$ varieties (in particular $\pi$ is flat);
        \item the geometric generic fiber of $\pi$ is normal;
        \item $\pi|_{\Delta_i}\colon\Delta_i\to C$ is flat for every irreducible component $\Delta_i$ of $\supp(\Delta)$.
    \end{itemize}
    In particular, $(X_c,\Delta_c)$ is a $TNR_1$ pair for all $c\in C$ and an actual pair for all $c$ but finitely many.
\end{definition}

$TNR_1$ pairs arise naturally via adjunction on plt centers of characteristic $p>0$.

\begin{lemma}[{\cite[Lemma 2.1]{HW},\cite[Proposition 5.51]{KM}}]\label{l-pltcentersnormalUH}
    Let $(X,\Delta+D)$ be a plt pair, where $D$ is prime and $\bQ$-Cartier. Then the normalization $D^\nu\to D$ is a small universal homeomorphism. If $\bQ\subset \cO_X$, then $\nu$ is an isomorphism.
\end{lemma}

\begin{remark}
Let $C$ be a regular integral affine scheme of dimension one and let $(X,\Delta)$ be a flat pair over $C$, such that $(X,\Delta+X_c)$ is plt for some closed point $c\in C$. In this case, as $X_c$ is Cartier, we have that the different $\textup{Diff}_{X_c}(\Delta)$ equals $\Delta_c$ (see \cite[Proposition 4.5]{Kol_SMMP}). In particular $(X_c,\Delta_c)$ is a $TNR_1$ pair by \autoref{l-pltcentersnormalUH}.
\end{remark}

\begin{remark}
    The definition of family of pairs over a higher-dimensional base is more subtle (\cite[Chapter 4]{Kol_FVGT}). The key issue is how to define the restriction $\Delta|_{X_c}$ as a $\bQ$-divisor. Since we will deal with families over higher-dimensional bases only in the case of empty boundary, this will not trouble us too much.
\end{remark}

\begin{definition}
    Let $B$ be a regular integral affine scheme, and let $X$ be an integral normal scheme. We say that a morphism $\pi\colon X\to B$ is a \textit{projective family of normal $\bQ$-Gorenstein varieties} if $K_X$ is $\bQ$-Cartier and $\pi$ is a flat and projective contraction whose fibers are normal varieties. If $\pi$ is smooth we call it a \textit{projective family of smooth varieties}.
\end{definition}

Next, we briefly recall some notions of lifting and deformations of varieties, line bundles, and pairs over non-reduced bases.

\begin{definition}\label{d-lifting_of_(X,L)}
    Let $X_0$ be an integral normal proper $k$-scheme, and let $L_0$ be a line bundle. A \textit{lifting} of $(X_0,L_0)$ over a scheme $T$ consists of
	\begin{enumerate}
		\item a $T$-scheme $X$ such that $X \to T$ is a flat contraction;
		\item a line bundle $L$ on $X$;
		\item a morphism $\alpha \colon \Spec(k) \to T$ and an isomorphism $\gamma \colon X_0\to X \times_T \Spec(k)$ such that $\gamma^*(L\otimes_T k) = L_0$.
		\end{enumerate}
    Let $(R,\mathfrak{m})$ be a complete local ring, and denote by $\Spf(R)$ the formal completion of $\Spec(R)$ at $\mathfrak{m}$. We say that $(\frX,\frL)$ is a \textit{formal lifting} of $(X,L)$ over $\Spf(R)$ if
    \begin{enumerate}
        \item $\frX \to \Spf(R)$ is formal scheme;
        \item $\frL$ is a line bundle on $\frX$;  
        \item for any $n>0$, the truncation $(X_n, L_n) = (\frX, \frL) \times_{\Spf(R)} \Spec(R/\mathfrak{m}^{n+1})$ is a lifting of $(X_0, L_0)$ over $\Spec(R/\mathfrak{m}^{n+1})$.
    \end{enumerate} 
\end{definition}

\begin{definition}\label{d-infinitesimal_deformation_pair}
    Let $A$ be a local Artinian ring with closed point $s$, let $(X_s,\Delta_s)$ be a projective pair over the residue field $\kappa(s)$, and let $n\geq 1$ be such that $n(K_{X_s}+{\Delta_s})$ is Cartier. We say $\pi\colon (X,\Delta)\to\Spec(A)$ is an \textit{infinitesimal deformation} of $(X_s,\Delta_s)$ if 
    \begin{enumerate}
        \item $X$ is an $A$-scheme such that $X\to\Spec(A)$ is a flat contraction, and there exists an isomorphism $\gamma\colon X\otimes \kappa(s)\cong X_s$;
        \item there exists an $A$-flat closed subscheme $Z\subset X$ such that $X^0\coloneqq X\setminus Z\xhookrightarrow{j} X$ is big and $A$-smooth;
        \item there exists a relative Cartier divisor $D^0$ on $X^0$ such that $\gamma(D^0)=n\Delta$; and
        \item for all $m\in n\bZ$ the sheaf $\omega_{X/A}^{[m]}(m\Delta)\coloneqq j_*\omega_{X^0/A}^m((m/n)D^0)$ is a line bundle on $\widetilde{X}$, and $\omega_{X/A}^{[m]}(m\Delta)\otimes \kappa(s)$ is identified with $\omega_{X_s/s}^{[m]}(m\Delta_s)$ via $\gamma$.
    \end{enumerate}
    Here $\Delta$ is formally identified with $(\textup{the closure of }D^0)/n$. Note that $(X,\omega_{X/A}^{[m]}(m\Delta))$ is a lifting of $(X,\omega^{[m]}_{X_s/s}(m\Delta))$ over $\Spec(A)$ in the sense of \autoref{d-lifting_of_(X,L)}. We will denote by $K_{X/A}+\Delta$ the $\bQ$-Cartier $\bQ$-divisor (class) such that $m(K_{X/A}+\Delta)$ corresponds to $\omega_{X/A}^{[m]}(m\Delta)$.
\end{definition}

\begin{remark}
    Suppose $\pi\colon (X,\Delta)\to S$ is a projective family of pairs over the spectrum of a DVR $(R,\mathfrak{m})$, and let $S_n\coloneqq \Spec(R/\mathfrak{m}^{n+1})$ for all $n\geq 0$. Then the base change $\pi_n\colon (X_n,\Delta_n)\to S_n$ is an infinitesimal deformation of $(X_s,\Delta_s)$ for all $n\geq 0$. 
\end{remark}

\begin{lemma}\label{l-lifting_infinitesimals_implies_lifting_in_family}
    Let $X_0$ be an integral proper $k$-scheme and let $L_0$ be a line bundle on $X_0$. Let $S$ be a DVR with closed point $s$ and let $(X,L)$ be a lifting of $(X_0,L_0)$ over $S$. Suppose that the restriction map
    \[
    H^0(X_n,L_n)\to H^0(X_0,L_0)
    \]
    is surjective for all $n\geq 0$. Then so is the restriction map
    \[
        H^0(X,L)\to H^0(X_0,L_0).
    \]
\end{lemma}
\begin{proof}
    Let $S=\Spec(R)$ and let $\varpi\in R$ be a uniformizer. We want to show surjectivity of $H^0(X_{n+1},L_{n+1})\to H^0(X_n,L_n)$ for all $n\geq 0$ by induction, the case $n=0$ being obvious. Note that we have isomorphisms $L_{i-1}\xrightarrow{\cdot \varpi}\varpi L_{i}$ for all $i\geq 1$ hence, by induction, we have a surjection $H^0(X_{n+1},\varpi L_{n+1})\to H^0(X_n,\varpi L_n)$. Fix now $s_n\in H^0(X_n,L_n)$: we want to find $s_{n+1}\in H^0(X_{n+1},L_{n+1})$ such that $s_{n+1}|_{X_n}=s_n$. By induction we can find $s_{n+1}'$ such that $s_{n+1}'|_{X_0}=s_n|_{X_0}$. In other words $s'_{n+1}|_{X_n}-s_n\in H^0(X_n,\varpi L_n)$. Hence we can find $s_{n+1}''\in H^0(X_{n+1},\varpi L_{n+1})$ such that $s_{n+1}''|_{X_n}=s'_{n+1}|_{X_n}-s_n$. By setting $s_{n+1}\coloneqq s_{n+1}'-s_{n+1}''$ we conclude.
\end{proof}


\subsection{Birational geometry} We refer the reader to \cite{KM} and \cite{Kol_SMMP} for an introduction to the Minimal Model Program. Here we just recall some notions needed in order to make sense of the results proven in this paper.

\begin{definition}
    Let $(X,\Delta)$ be a projective pair over an integral regular affine scheme $B$. We say it is a 
    \begin{itemize}
        \item \textit{minimal model} if it is dlt and $K_X+\Delta$ is nef over $B$;
        \item \textit{good minimal model} if it is a minimal model and $K_X+\Delta$ is semiample\footnote{See \autoref{d-semiample} and \autoref{t-semiamplefibration} for more on semiample divisors.} over $B$, i.e. $m(K_X+B)$ is basepoint free for some $m\geq 1$;
        \item \textit{canonical model} if it is lc and $K_X+\Delta$ is ample over $B$.
    \end{itemize}
    A contraction $f\colon X\to Y$ of $B$-schemes is a $(K_X+\Delta)$\textit{-Mori fiber space} (also called \textit{fiber-type contraction} in \cite{KM}) if $\dim(X)>\dim(Y)$, $K_X+\Delta$ is $f$-antiample, and $X$ has Picard rank one over $Y$.
\end{definition}

\begin{definition}\label{d-D-negative contraction_ample_model}
    A birational map of integral normal quasi-projective $B$-schemes $\phi \colon X \dashrightarrow Y$ is a \emph{birational contraction} if it is proper and $\phi^{-1}$ does not contract any divisor.
    Let $\phi\colon X\dashrightarrow Y$ be a  birational contraction and let $D$ be a $\bQ$-Cartier divisor on $X$ such that $D'\coloneqq \phi_* D$ is also $\bQ$-Cartier. Let $W$ be the normalized closure of the graph of $\phi$ in $X\times_B Y$, and let $p$ and $q$ the induced morphisms to $X$ and $Y$, respectively. We say that $\phi$ is \textit{$D$-non-positive} (respectively \textit{$D$-negative}) if
    \[
    p^*D=q^*D'+E
    \]
    where $E\geq 0$ is $q$-exceptional (respectively $E\geq 0$ is $q$-exceptional and the support of $E$ contains the strict transform of the $\phi$-exceptional divisors).

    Let now $g\colon X\dashrightarrow Z$ be a dominant rational map of integral normal quasi-projective $B$-schemes. Let $V$ be the normalized closure of the graph of $g$ in $X\times_B Z$, and let $p$ and $q$ be the induced morphisms to $X$ and $Z$, respectively. We say $g$ is the \textit{ample model of $D$ over $B$} if $q$ is a contraction and there is a $\bQ$-divisor $H$ on $Z$, ample over $B$, such that we may write $p^*D\sim_{\bQ,B}q^*H+E$, where $E\geq 0$ and $G\geq E$ for every $G\in |p^*D/B|_{\bQ}$.
\end{definition}

\begin{remark}\label{r-non_positive_contractions_preserve_sections}
    Keeping the notation of \autoref{d-D-negative contraction_ample_model}, if $\phi\colon X\dashrightarrow Y$ is a $D$-non-positive birational contraction, the projection formula yields isomorphisms $H^0(X,mD)\cong H^0(Y,mD')$ for all $m\geq 0$ divisible enough. Similarly, if $g\colon X\dashrightarrow Z$ is the ample model of $D$ then the projection formula yields isomorphisms $H^0(X,mD)\cong H^0(Z,mH)$ for all $m\geq 0$ divisible enough.
\end{remark}

An important feature of $(K_X+\Delta)$-non-positive birational contractions is that they preserve singularities.

\begin{lemma}\label{l-negcontimprovesings}
    Let $B$ be a regular integral affine scheme. Consider the commutative diagram
    \begin{equation*}
        \begin{tikzcd}
            X_1\arrow[rr,"\varrho",dashed]\arrow[dr,"f_1",swap] &   & X_2\arrow[dl,"f_2"]\\
              & Z & 
        \end{tikzcd}
    \end{equation*}
    where $f_1$ and $f_2$ are proper birational maps of integral normal $B$-schemes. Let $(X_1,\Delta_1)$ and $(X_2,\Delta_2)$ be pairs over $B$ such that
    \begin{itemize}
        \item $f_{1,*}\Delta_1=f_{2,*}\Delta_2$;
        \item $-(K_{X_1}+\Delta_1)$ is $f_1$-nef;
        \item $K_{X_2}+\Delta_2$ is $f_2$-nef.
    \end{itemize}
    Then for any exceptional divisor $E$ over $Z$ we have
    \[
    a(E, X_1, \Delta_1) \leq a(E, X_2, \Delta_2).
    \]
    In particular, if $(X_1,\Delta_1)$ is klt (resp. lc) then so is $(X_2,\Delta_2)$. The same holds when $(X_1,\Delta_1)$ is plt (resp. dlt), provided $\varrho$ does not contract any component of $\lfloor\Delta_1\rfloor$.
    Furthermore, if $-(K_{X_1} + \Delta_1)$ is $f_1$-ample and $f_1$ is not an isomorphism
    above the generic point of $\textup{center}_{Z}(E)$, or if $K_{X_2} + \Delta_2$ is $f_2$-ample and $f_2$ is not an isomorphism
    above the generic point of $\textup{center}_{Z}(E)$ then $a(E, X_1, \Delta_1) < a(E, X_2, \Delta_2)$. 
    
    The same conclusions hold also when $(X_1,\Delta_1)$ and $(X_2,\Delta_2)$ are $TNR_1$ pairs over a field of characteristic $p>0$.
\end{lemma}

\begin{proof}
    When the $X_i$ are normal, this is \cite[Corollary 1.23]{Kol_SMMP}. 

    When the $(X_i,\Delta_i)$ are $TNR_1$ pairs over a field of characteristic $p>0$ we have a commutative diagram of normalizations
    \begin{equation*}
        \begin{tikzcd}
            (X_1^\nu,\Delta_1^\nu)\arrow[rr,"\varrho^\nu",dashed]\arrow[dr,"f_1^\nu",swap] &   & (X_2^\nu,\Delta_2^\nu)\arrow[dl,"f_2^\nu"]\\
              & Z^\nu, & 
        \end{tikzcd}
    \end{equation*}
    such that $f^\nu_{1,*}\Delta_1^\nu=f^\nu_{2,*}\Delta_2^\nu$, $-(K_{X_1^\nu}+\Delta_1^\nu)$ is $f_1^\nu$-nef, and $K_{X_2^\nu}+\Delta_2^\nu$ is $f_2^\nu$-nef.
    We conclude by applying \cite[Corollary 1.23]{Kol_SMMP} to the above diagram.
\end{proof}

\begin{definition}\label{d-goodminimalcanonical_model}
    Let $(X,\Delta)$ be a projective dlt pair over a regular integral affine scheme $B$ and let 
    \[
    \varrho\colon (X,\Delta)\dashrightarrow (Y,\Gamma\coloneqq \varrho_*\Delta)
    \]
    be a birational contraction over $B$. We say that $\varrho$ (or $(Y,\Gamma)$) is
    \begin{itemize}
        \item a \textit{(good) minimal model of} $(X,\Delta)$ if $K_Y+\Gamma$ is $\bQ$-Cartier, $\varrho$ is $(K_X+\Delta)$-negative, and $(Y,\Gamma)$ is a (good) minimal model;
        \item the\footnote{If the canonical model exists, it is unique (\cite[Theorem 1.26]{Kol_SMMP})} \textit{canonical model} of $(X,\Delta)$ if $K_Y+\Gamma$ is $\bQ$-Cartier, $\varrho$ is $(K_X+\Delta)$-non-positive, and $(Y,\Gamma)$ is a canonical model;
    \end{itemize}        
\end{definition}

The Minimal Model Program predicts the following dichotomy: given a dlt pair $(X,\Delta)$ projective over $B$, there exists a $(K_X+\Delta)$-negative birational contraction 
\[
\varrho\colon (X,\Delta)\dashrightarrow (X',\Delta'\coloneqq\varrho_*\Delta)
\]
such that
\begin{itemize}
    \item if $K_X+\Delta$ is pseudoeffective over $B$, then $(X',\Delta')$ is a minimal model;
    \item if $K_X+\Delta$ is not pseudoeffective over $B$, then there exists a $(K_{X'}+\Delta')$-Mori fiber space, $f\colon X'\to Y$.
\end{itemize}

Furthermore, the map $\varrho$ can be understood in terms of elementary birational transformations, namely \textit{divisorial contractions} and \textit{flips}\footnote{See \cite{KM} for the definition of divisiorial and flip(ping) extremal contraction.}. 

The Abundance Conjecture predicts that every minimal model is actually good.

Over the complex numbers the Minimal Model Program has been established in dimension $\leq 3$, or when $K_X+\Delta$ is big over $B$ (\cites{Mori,BCHM}). In the latter case, Abundance is a consequence of the basepoint-free theorem. In the non-big case, Abundance is known in dimension $\leq 3$ (\cites{Miy_Abu,Miy_min3fold,Kaw_Plurican,KMM}), and when $K_X+\Delta\equiv 0$ (\cites{Kaw_Fiberspaces,Nak_ZDA}). Over a perfect field of characteristic $p>5$, the Minimal Model Program is known in dimension $\leq 3$ (\cites{HXp,Tan_MMPABU}). A good part of it holds even when the field is imperfect (\cites{DW,Waldron}). Abundance is also known to hold in dimension $\leq 3$ (\cites{Tan_MMPABU,DW_abu,Wal_FG,Wal_LMMP,Wit_CBF,XuZheng_Abu}), except the case where $\dim (X)=3$ and $K_X+\Delta$ has numerical Kodaira dimension one. More recently, advances in mixed characteristic commutative algebra have made it possible to establish the Minimal Model Program even for mixed characteristic 3-folds (\cite{BMPSTWW}), and subsequently the Abundance conjecture (\cite{BBS}).

For the purposes of proving \autoref{t-MMP+ABU=>AIP/p (charfree)}, we will assume the following special cases of the Minimal Model Program and Abundance Conjectures for pairs over regular affine schemes of dimension one.

\begin{conjecture}\label{c-MMP}
     Let $S$ be either an integral smooth affine curve over a field of characteristic zero or the spectrum of a DVR of positive or mixed characteristic, and let $s\in S$ be a closed point. Let $\pi\colon (X,\Delta)\to S$ be a projective family of klt pairs such that, if $V\subset \bB_-(K_X+\Delta)$ is a non-canonical center of $(X,\Delta+X_s)$ then $V$ is $\pi$-horizontal. Then there is a sequence of finitely many $(K_X+\Delta)$-negative divisorial contractions and $(K_X+\Delta)$-flips
    \[
    \varrho\colon (X,\Delta)\dashrightarrow (X',\Delta'\coloneqq \varrho_*\Delta)
    \]
    over $S$, such that
    \begin{itemize}
        \item if $K_X+\Delta$ is pseudoeffective, then $(X',\Delta')$ is a minimal model;
        \item if $K_X+\Delta$ is not pseudoeffective, then $(X',\Delta')$ admits a $(K_{X'}+\Delta')$-Mori fiber space structure.
    \end{itemize}
\end{conjecture}

\begin{conjecture}\label{c-ABU}
    Let $S$ be either an integral smooth affine curve over a field of characteristic zero or the spectrum of a DVR of positive or mixed characteristic, and let $s\in S$ be a closed point. Let $\pi\colon (X,\Delta)\to S$ be a family of klt $TNR_1$ pairs. If $K_X+\Delta$ is nef then it is semiample.
\end{conjecture}

\begin{remark}
    We point out that, when $S$ is of positive or mixed characteristic, \autoref{c-ABU} holds if and only if Abundance holds for klt pairs over \textit{fields}. Indeed, suppose $\pi\colon (X,\Delta)\to S$ is a projective family of klt $TNR_1$ pairs. Then $K_{X_\eta}+\Delta_\eta$ and $K_{X^\nu_s}+\Delta_s^\nu$ are both semiample, in particular so is $K_{X_s}+\Delta_s$ by \autoref{l-keelfrobeniusdescent}. Then $K_X+\Delta$ is semiample over $S$ by \cite{CT} and \cite{Wit_RelSA}.
\end{remark}

\subsection{Boundedness}

\begin{definition}
    Let $k$ be any field and let $\cB=\lbrace Y_i\rbrace_{i\in I}$ be a class of projective $k$-varieties. We say $\cB$ is \textit{bounded} (resp. \textit{birationally bounded}) if there exists a surjective projective morphism of finite type $k$-schemes $\beta\colon \cY\to\cS$ such that for any $i\in I$ there exists $s\in\cS(k)$ and an isomorphism $Y_i\cong \cY_s$ (resp. a birational isomorphism $Y_i\simeq\cY_s$). Such a $\beta$ is called a \textit{bounding family for }$\cB$ (resp. a \textit{birationally bounding family for} $\cB$).
\end{definition}

\begin{definition}
        Let $k$ be any field and let $\cS$ be a $k$-scheme of finite type. A \textit{stratification of $\cS$} is a collection $\lbrace \cS_j\rbrace_{j\in J}$ of finitely many pairwise disjoint locally closed subschemes such that $\cS=\bigsqcup_{j\in J} \cS_j$ set-theoretically.
\end{definition}

\begin{remark}
    If $\beta\colon\cY\to \cS$ is a (birationally) bounding family for a class of projective varieties $\cB$ and $\cS'\coloneqq \bigsqcup_{j\in J}\cS_j$ is a stratification of $\cS$, then the base change
    \[
    \beta'\colon \cY\times_\cS\cS'\to\cS'
    \]
    via the natural morphism $\cS'\to \cS$ is again a (birationally) bounding family for $\cB$.
\end{remark}

\begin{lemma}\label{l-simnor}
Let $k$ be a perfect field, let $\cB=\lbrace Y_i\rbrace_{i\in I}$ be a class of projective $k$-varieties, and let $\cB^\nu=\lbrace Y^{\nu}_i\rbrace_{i\in I}$ be the class of the normalizations. If $\cB$ is bounded then so is $\cB^\nu$. Furthermore, we can pick a bounding family for $\cB^{\nu}$
\[
\bigsqcup_{j\in J}\beta_j^{\nu}\colon\bigsqcup_{j\in J}\cY^{\nu}_j\to\bigsqcup_{j\in J}\cS_j
\] 
where each $\beta_j^{\nu}$ is a flat projective surjective morphism with geometrically integral and normal fibers, such that $\cY_j$ normal and $\cS_j$ is smooth for all $j\in J$.
\end{lemma}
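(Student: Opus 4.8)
The goal is: (i) if $\cB$ is bounded then $\cB^n$ is bounded; and (ii) we can arrange the bounding family so that the base schemes $\cS_i$ are integral and both the total spaces $\cY_i^n$ and the geometric fibers of $\beta_i^n$ are normal. The strategy is to start from a bounding family $\beta\colon\cY\to\cS$ for $\cB$, stratify the base, pass to normalizations, and invoke a constructibility/openness result for the locus where relative normalization commutes with base change (this is where the real content sits).

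First I would reduce to the case where $\cS$ is integral: $\cS$ is a $k$-scheme of finite type, hence Noetherian, so it has finitely many irreducible components, and restricting $\beta$ over each (with the reduced induced structure) and taking the disjoint union gives a bounding family with integral base. So assume $\cS$ integral, and by further stratifying we may assume $\beta$ is flat (generic flatness, plus Noetherian induction on $\cS$) and that $\cY$ is irreducible — actually we should first restrict to the union of the components of $\cY$ that dominate $\cS$, since every $Y_i$ is irreducible and appears as a fiber, so a general point of $\cS$ receives $Y_i$ as $\cY_s$, and we only care about such general points; after removing a proper closed subset of $\cS$ we may take $\cY$ integral and $\beta$ flat with geometrically integral generic fiber. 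Then further stratify so that all geometric fibers $\cY_{\bar s}$ are varieties (the locus of geometrically integral fibers is open in the flat case, \cite[Tag 0559]{SP}, or one stratifies the non-integral locus and discards it since those fibers are never any $Y_i$). Now form the relative normalization $\nu\colon\cY^n\to\cY$ over $\cS$ (finite, since $\cY\to\cS$ is of finite type over a field, so the fibers are excellent, hence Nagata). Then $\cY^n$ is integral and normal: $\cY^n$ is normal because it is the normalization of the integral scheme $\cY$, which is excellent.

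The key step is to show that, after shrinking $\cS$ to a dense open, the formation of $\cY^n$ commutes with passage to geometric fibers, i.e. $(\cY^n)_{\bar s}\to\cY_{\bar s}$ is the normalization for $\bar s$ general. This is a standard but nontrivial point: the locus of points $s\in\cS$ over which relative normalization commutes with base change is constructible, and contains the generic point because the generic fiber of $\cY^n\to\cS$ is geometrically normal after a further shrink (generically on $\cS$, geometric normality of the generic fiber holds, since $\cY^n$ is normal and we can apply generic geometric normality of fibers of a finite type morphism with normal source, e.g. \cite[Tag 0578]{SP} combined with \cite[Tag 038O]{SP}); hence it contains a dense open. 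Over that dense open $U\subset\cS$, the geometric fibers of $\beta^n\colon\cY^n\to\cS$ are normal, and since $\nu$ is finite and an isomorphism in codimension zero fiberwise (both sides being varieties over a field), $(\cY^n)_{\bar s}$ is the normalization of $\cY_{\bar s}$ for all geometric points $\bar s$ of $U$. Restricting $\beta^n$ over $U$ and taking the disjoint union over the finitely many strata produced along the way gives the desired bounding family: for each $i$, $Y_i\cong\cY_s$ for some $s$ in some stratum, hence $Y_i^n\cong(\cY_s)^n\cong(\cY^n)_s\cong\cY^n_s$, so $\cB^n$ is bounded, with $\cY^n$ normal and all geometric fibers of $\beta^n$ normal.

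The main obstacle is precisely the commutation of relative normalization with base change to geometric fibers — this fails in general (normalization does not commute with arbitrary base change, and geometric fibers of a normal scheme need not be normal, e.g. in the presence of inseparability), so the whole point is to establish that it holds generically on the base. The safe route is to argue via geometric normality of the generic fiber: shrink $\cS$ so that the generic fiber of $\cY^n\to\cS$ is geometrically normal (possible since the source is normal of finite type over a field, by generic geometric normality of fibers), and then use that geometric normality is an open condition on $\cS$ for a finite-type morphism (\cite[Tag 0C3P]{SP}) together with the fact that a finite birational morphism of varieties onto a variety whose target is normal is an isomorphism, applied fiberwise, to conclude $(\cY^n)_{\bar s}$ is normal and $(\cY^n)_{\bar s}\to\cY_{\bar s}$ is the normalization. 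Everything else is routine Noetherian stratification.
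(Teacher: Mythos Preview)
Your plan correctly identifies the key difficulty---that normalization need not commute with passage to geometric fibers---but the proposed resolution fails in positive characteristic, which is the case the paper actually needs (the lemma feeds into \autoref{t-mainD}, where $\charac k>5$). The problematic step is ``shrink $\cS$ so that the generic fiber of $\cY^n\to\cS$ is geometrically normal'': shrinking $\cS$ does not change the generic fiber, and there is no theorem guaranteeing generic geometric normality of fibers for a morphism with normal source in characteristic $p$. Concretely, let $\cS=\bA^1_t$ and let $\cY$ be a smooth projective compactification of $V(y^2-x^p+t)\subset\bA^2\times\cS$ (e.g.\ the cubic $V(Y^2Z-X^3+tZ^3)\subset\bP^2\times\cS$ when $p=3$). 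Then $\cY$ is smooth, so $\cY^n=\cY$, yet every geometric fiber---including the geometric generic one---is the cuspidal curve $y^2=(x-t^{1/p})^p$, which is not normal. No nonempty open subset of $\cS$ has geometrically normal fibers, so your Noetherian induction never gets off the ground. (In characteristic zero your argument does go through, since normal and geometrically normal coincide for finite-type schemes over such a field.)

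The paper instead invokes Koll\'ar's simultaneous normalization results \cite[Theorem~12, Lemma~21]{KolSN} (see also \cite{CL}) as a black box, obtaining directly a locally closed decomposition $\cS=\bigsqcup_i\cS_i$ over which a simultaneous normalization exists in the sense of \cite[Definition~11]{KolSN}; the remaining checks on integrality of $\cS_i$ and normality of $\cY_i^n$ are then routine. Your outline becomes salvageable if you replace the shrinking step by: pass to the geometric generic fiber $\cY_{\overline{K}}$, normalize there, descend to a finite extension $L/K$, spread out over a dominant generically finite $\cS'\to\cS$ with function field $L$, and then run your openness-of-geometric-normality argument on the normalization of $\cY\times_\cS\cS'$. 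But that finite, possibly purely inseparable, base change is exactly the nontrivial input your sketch is missing.
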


\begin{proof}
Let $\beta\colon \cY\to\cS$ be a bounding family for $\cB$. Without loss of generality we may assume that $\cS$ is reduced. Smoothness of $\cS$ is an open condition, and by \cite[Tag 0ASY]{SP} $\beta$ has a flattening stratification. Combining this with \cite[Theoreme 12.2.1]{EGAIV_3} and \cite[Theorem 12, Lemma 21]{KolSN} (see also \cite{CL}) we obtain that the scheme $\cS$ has a stratification $\cS'\coloneqq\bigsqcup_{j\in J}\cS_j$ such that, letting 
\[
\beta_j\colon \cY_j\coloneqq \cY\times_{\cS}\cS_j\to\cS_j
\]
be the morphism induced by base change, the following hold for all $j\in J$:
\begin{itemize}
    \item $\cS_j$ is smooth and $\cY_j$ is integral;
    \item $\beta_j$ is flat, surjective, and with integral geometric fibers;
    \item there exists a simultaneous normalization (\cite[Definition 11]{KolSN}) $\cY^\nu_j\to\cY_j$. 
\end{itemize}
Then 
\[
\bigsqcup_{j\in J}\beta_j^{\nu}\colon\bigsqcup_{j\in J}\cY^{\nu}_j\to\bigsqcup_{j\in J}\cS_j
\] 
is a bounding family for $\cB^{\nu}$.
\end{proof}

\begin{lemma}\label{l-simres}
Let $k$ be an algebraically closed field of characteristic $p>0$, let $\beta\colon\cY\to\cS$ be a flat projective surjective morphism of dimension three between $k$-schemes of finite type, where $\cS$ is integral, smooth, and $\cY$ and the geometric fibers of $\beta$ are integral and normal. Then there exists a dominant morphism of integral schemes $\cS'\to \cS$, which is finite onto its image, such that the induced morphism $\cY'\coloneqq \cY\times_\cS \cS'\to \cS'$ admits a simultaneous resolution.
\end{lemma}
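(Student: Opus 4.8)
The plan is to resolve the \emph{geometric} generic fibre of $\beta$, descend that resolution to a finite extension of the function field $k(\cS)$, spread it out over a dense open of the resulting finite cover of $\cS$, and finally shrink the base so that the total space becomes smooth over it and the construction restricts to a resolution on every geometric fibre.

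First I would set $K:=k(\cS)$, fix an algebraic closure $\overline K$, and let $\overline\eta\to\cS$ be the associated geometric generic point. By hypothesis $\cY_{\overline\eta}$ is an integral normal projective threefold over the algebraically closed (hence perfect) field $\overline K$, so the existence of projective log resolutions for excellent schemes of dimension $\leq 3$ (\cite{CP19, CJS20}, cf. the remark preceding \autoref{d-generic_fiber_pair}) supplies a projective birational $\mu_{\overline\eta}\colon \mathcal{W}_{\overline\eta}\to \cY_{\overline\eta}$, realised as a finite sequence of blow-ups along regular centres supported on the successive strict transforms of the singular locus, with $\mathcal{W}_{\overline\eta}$ smooth over $\overline K$ and $\mu_{\overline\eta}$ an isomorphism over the regular locus of $\cY_{\overline\eta}$.

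Next I would descend: the finitely many blow-up centres, hence $\mu_{\overline\eta}$ and its exceptional locus, are already defined over some finite subextension $K\subseteq K'\subseteq\overline K$ (standard descent, \cite[\S 8]{EGA4}), and since smoothness is insensitive to the faithfully flat base change $\overline K/K'$, the descended map $\mu_{\eta'}\colon \mathcal{W}_{\eta'}\to \cY_{\eta'}$ over $\eta'=\Spec K'$ still has smooth total space and is still an isomorphism over the regular locus. Let $\cS_1$ be the normalisation of $\cS$ in $K'$ — an integral scheme, finite and surjective over $\cS$, with function field $K'$ — and spread $\mu_{\eta'}$ out, over a dense open $\cS'\subseteq\cS_1$, to a projective birational $\mu'\colon \mathcal{W}'\to \cY':=\cY\times_{\cS}\cS'$ (blowing up the spread-out centres). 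Now $\cY'\to\cS'$ is flat with geometric fibres among those of $\beta$, hence integral and normal, so $\cY'$ is integral and normal; and shrinking $\cS'$ (generic flatness for $\mathcal{W}'\to\cS'$; openness of the smooth locus, whose complement has constructible image in $\cS'$ missing the generic point; upper semicontinuity of fibre dimension applied to $\Exc(\mu')\to\cS'$) I may arrange that $\mathcal{W}'\to\cS'$ is smooth and projective, that the fibres of $\Exc(\mu')\to\cS'$ all have dimension $\leq 2$, and that $\mu'$ is an isomorphism over $\cY'\setminus\Exc(\mu')$. Then for every $s\in\cS'$ the base change $\mathcal{W}'_s\to\cY'_s$ is a proper birational morphism from the smooth (hence regular) $\mathcal{W}'_s$ to the integral normal threefold $\cY'_s$, an isomorphism over its regular locus — i.e. a resolution; thus $\mu'$ is a simultaneous resolution of $\cY'\to\cS'$. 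Finally, replacing $\cS'$ by $\beta_1^{-1}(\cS\setminus W)\subseteq\cS'$, where $\beta_1\colon\cS_1\to\cS$ and $W$ is the closed image in $\cS$ of $\cS_1\setminus\cS'$, the map $\cS'\to\cS$ factors as a finite surjection onto the dense open $\cS\setminus W$ followed by the open immersion into $\cS$, so it is dominant and finite onto its image, as required.

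The main obstacle is the positive-characteristic phenomenon that a resolution of $\cY_\eta$ over the (imperfect) field $k(\cS)$ need not be geometrically regular, so one cannot build a relatively smooth resolution directly over $\cS$; this is precisely what forces the passage to the finite cover $\cS_1\to\cS$, over which a resolution of the geometric generic fibre is already available so that smoothness survives the remaining base change to $\overline K$. The rest is the routine but somewhat delicate bookkeeping — generic flatness, openness of the smooth locus, constructibility of images, semicontinuity of fibre dimension — needed to propagate "smooth and birational over the generic point" to all of $\cS'$, together with the observation that the geometric fibres of $\cY'\to\cS'$ coincide with geometric fibres of $\beta$, so they remain integral and normal and "fibrewise resolution" retains its meaning.
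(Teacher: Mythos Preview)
Your proposal is correct and follows essentially the same strategy as the paper: resolve the geometric generic fibre $\cY_{\overline K}$, descend the sequence of blow-ups to a finite extension $K'/K$, spread out over an integral base $\cS'$ with function field $K'$ that is finite onto its image in $\cS$, and then shrink so that the construction restricts to a resolution on every fibre. The paper phrases the spreading-out by keeping the blow-up centres $\cS'$-smooth (citing the argument of \cite[Corollary~1.10]{DW}), whereas you invoke the general openness/constructibility package (smooth locus, generic flatness, fibre dimension) to achieve the same end; both are valid. One small point: your clause ``an isomorphism over its regular locus'' is asserted but not quite established by the preceding shrinking (you control $\dim\Exc(\mu')_s\le 2$, which gives birationality, not containment of $\Exc(\mu')_s$ in $\mathrm{Sing}(\cY'_s)$); however, this stronger property is not needed for the application and could be arranged by one more shrink, so it does not affect correctness.
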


\begin{proof}
Let $K$ be the function field of $\cS$ and let $Y\coloneqq \cY_{\overline{K}}$ be the geometric generic fiber of $\beta$. By \cite{CP19} there exists a resolution of singularities of $\overline{K}$-varieties
\begin{equation}\label{e-ros}
f\colon X=X_n\xrightarrow{f_{n-1}} X_{n-1}\xrightarrow{f_{n-2}}...\xrightarrow{f_0} X_0= Y
\end{equation}
where each $f_i$ is a blowup along a $\overline{K}$-smooth center $Z_i\subset X_i$. For every $i=0,...,n$ we can find finite extensions of $K$, $K_i\subset\overline{K}$, and $K_i$-schemes $Z'_i\subset X'_i$ such that $(Z'_{i,\overline{K}}\subset X'_{i,\overline{K}})=(Z_i\subset X_i)$. Note that $Z'_i$ is $K_i$-smooth by \cite[Tag 02YJ]{SP}. Let now $L$ be a finite extension of $K$ contained in $\overline{K}$ and containing $K_i$ for all $i$. By the usual ``spreading out'' technique (see for example the proof of \cite[Corollary 1.10]{DW}) we can find a dominant morphism of integral normal $k$-schemes $\cS'\to \cS$ which is finite onto its image, such that $L$ is the function field of $\cS'$, and a sequence of morphisms of integral normal projective and flat $\cS'$-schemes
\begin{equation}\label{e-ros'}
\tilde{f}\colon \cX=\cX_n\xrightarrow{\tilde{f}_{n-1}} \cX_{n-1}\xrightarrow{\tilde{f}_{n-2}}...\xrightarrow{\tilde{f}_0} \cX_0= \cY'.
\end{equation}
Here $\cY'\coloneqq \cY\times_\cS \cS'$, each $\tilde{f}_i$ is a blowup along a $\cS'$-smooth center $\cZ_i\subset \cX_i$, and \autoref{e-ros} is the base change of \autoref{e-ros'} through $\Spec (\overline{K})\to \cS'$. In particular $\cX\to\cS'$ is generically smooth. Hence, up to replacing $\cS$ and $\cS'$ with open subsets, we may assume it is actually smooth and the induced morphism $\tilde{f}_s\colon \cX_s\to \cY'_s$ is a resolution of singularities for all $s\in\cS'$.
\end{proof}


\section{Relative Iitaka fibrations}\label{s-relative_iitaka_fibration}

Let $B$ be an integral regular affine scheme, let $X$ be an integral normal proper $B$-scheme, let $D$ be an effective Cartier divisor, and consider the dominant rational maps of $B$-schemes
\[
\phi_{|mD|}\colon X\dashrightarrow Z_m\subset \bP_B^{h^0(X,mD)-1},
\]
where $Z_m$ denotes the image of $\phi_{|mD|}$. By \cite[Sections 2.1.A and 2.1.B]{La1} for all $m>0$ sufficiently divisible the maps $\phi_{|mD|}$ are birational to a fixed morphism of integral normal $B$-schemes
\[
\phi_{\infty}\colon X_{\infty}\to Z_{\infty},
\] called the \textit{Iitaka fibration of $D$ over $B$}, satisfying $\phi_{\infty,\ast}\cO_{X_{\infty}}=\cO_{Z_{\infty}}$. By a slight abuse of notation we will also refer to the maps $\phi_{|mD|}$ as the Iitaka fibration of $D$, whenever $m>0$ is sufficiently divisible (this shall not cause too much confusion, as we will primarily be concerned with semiample $D$). The above definitions extend naturally to $\bQ$-Cartier $\bQ$-divisors.

\begin{definition}\label{d-semiample}
		Let $\pi \colon X \to B$ be a proper morphism. A $\bQ$-Cartier $\bQ$-divisor $D$ on $X$ is said to be \textit{semiample over $B$} (or $\pi$-\textit{semiample}) if there exists $m>0$ such that $mD$ is integral and globally generated over $B$, that is the natural map $\pi^*\pi_*\cO_X(mD) \to \cO_X(mD)$ is surjective. 
	\end{definition} 
		
\begin{theorem}[{\cite[Theorem 2.1.26]{La1}}]\label{t-semiamplefibration}
		Let $X$ be an integral normal proper $B$-scheme and let $D$
    be a $\bQ$-Cartier $\bQ$-divisor on $X$. Then the following are equivalent.
\begin{itemize}
    \item $D$ is semiample over $B$;
    \item there is a contraction of $B$-schemes $f\colon X\to Z$ such that $f$ is the morphism induced
          by $|mD/B|$ for all sufficiently divisible $m$;
    \item There is a contraction of $B$-schemes $f \colon X \to Z$ such that $D\sim_{\bQ,B}f^*A$ where $A$ is a $\bQ$-divisor which is ample over $B$.
\end{itemize}
\end{theorem}

\begin{remark}[Section rings and ample models]
    The contraction $f$ is the same in points (b) and (c) of \autoref{t-semiamplefibration}, and it is the Iitaka fibration of $D$ (also called the \textit{semiample contraction of D}). Since $D$ is semiample, then the \textit{section ring} 
    \[
    R(X/B,D)\coloneqq \bigoplus_{m\geq 0}\pi_*\cO_X(\lfloor mD\rfloor)
    \]
    is a sheaf of finitely generated graded $\cO_B$-algebras, and $f$ is the natural morphism of $B$-schemes $f\colon X\to \relProj_B (R(X/B,D))$. More in general, whenever $R(X/B,D)$ is finitely generated, the Iitaka fibration of $D$ is given by the natural dominant rational map of $B$-schemes $f\colon X\dashrightarrow \relProj_B (R(X/B,D))$ (see \cite[Section 2.1]{La1}). In this case $X^\Amp\coloneqq \relProj_B (R(X/B,D))$ is the ample model of $D$ over $B$, introduced in \autoref{d-D-negative contraction_ample_model}. In particular, if $(X,\Delta)$ is a dlt pair over $B$ such that $K_X+\Delta$ is big and finitely generated, then its ample model coincides with the canonical model (\autoref{d-goodminimalcanonical_model}).
\end{remark}

\begin{lemma}\label{l-f_scontraction_fcontraction}
    Let $B$ be an integral regular affine scheme and let $X$ be an integral normal scheme. Let $\pi\colon X\to B$ be a flat contraction whose fibers are normal varieties, and let $f\colon X\to Y$ be a proper morphism of flat $B$-schemes. Then the following hold.
    \begin{itemize}
        \item[(1)] If $f_s$ is a contraction for some closed point $s\in B$ then $f$ and $f_u$ are contractions for all $u$ in an open neighborhood of $s$.
        \item[(2)] If $f$ is a contraction, then $f_s$ is a contraction for all $s\in B$ general.
    \end{itemize}
\end{lemma}

\begin{proof}
    We begin by proving (1). We first show that $f$ is a contraction. By the theorem on formal functions (\cite[Theorem III.11.1]{Har}) we can replace $X$ and $Y$ with their completion along $X_s$ and $Y_s$, which we denote by $\frX$ and $\frY$, respectively. We can express these formal schemes as limits of extension of the following kind
    \begin{center}
        \begin{tikzcd}
        X_0\arrow[d,"f_0"]\arrow[r,hook]&\cdots\arrow[r,hook]&X_{n}\arrow[d,"f_n"]\arrow[r,hook] & X_{n+1}\arrow[d,"f_{n+1}"]\arrow[r,hook]&\cdots\arrow[r,hook]&\frX\arrow[d,"\hat{f}"]\\
            Y_0\arrow[r,hook]\arrow[d]&\cdots\arrow[r,hook]&Y_{n}\arrow[d]\arrow[r,hook] & Y_{n+1}\arrow[d]\arrow[r,hook]&\cdots\arrow[r,hook] & \frY\arrow[d]\\
            \Spec (R_0)\arrow[r,hook]&\cdots\arrow[r,hook]&\Spec (R_{n})\arrow[r,hook] & \Spec (R_{n+1})\arrow[r,hook] & \cdots\arrow[r,hook] & \Spf (R), 
        \end{tikzcd}
    \end{center}
    where $R\coloneqq \widehat{\cO_{B,s}}$ and $\ker (R_{n+1}\to R_n)$ is a principal square-zero ideal which, as an $R_{n+1}$-module, is isomorphic to $R_{r}$ for some $r\leq n$. We will show $f_{n+1,*}\cO_{X_{n+1}}=\cO_{Y_{n+1}}$ by induction. Note that we have the following morphism of exact sequences of $\cO_{Y_{n+1}}$-modules
    \begin{center}
        \begin{tikzcd}
            0\arrow[r] & f_{r,*}\cO_{X_{r}}\arrow[r]           & f_{n+1,*}\cO_{X_{n+1}}\arrow[r]            & f_{n,*}\cO_{X_n}\arrow[r]             & R^1f_{r,*}\cO_{X_r}\\
            0\arrow[r] & \cO_{Y_r}\arrow[r]\arrow[u,"\cong"] & \cO_{Y_{n+1}}\arrow[r]\arrow[u,hook] & \cO_{Y_{n}}\arrow[r]\arrow[u,"\cong"] & 0.\arrow[u]
        \end{tikzcd}
    \end{center}
    The lower row is exact by the infinitesimal criterion for flatness (\cite[Tag 00MD]{SP}), while the first and third vertical arrows are isomorphism by induction.
    In particular, we see that the map $f_{n+1,*}\cO_{X_{n+1}}\to f_{n,*}\cO_{X_{n}}$ is surjective, so we actually have a morphism of short exact sequences. Since both $X_{n+1}$ and $Y_{n+1}$ are faithfully flat over $R_{n+1}$ and $f_{n+1}$ is surjective, we have that the central arrow is injective. A straightforward diagram chase shows then that it is an isomorphism.

    We now show that $f_u$ is a contraction for all $u$ in a neighborhood of $s$. We will denote by $\sigma\colon Y\to B$ the structure morphism. Let $\cO_Y(l)$ be a sufficiently ample line bundle on $Y$ and let $L\coloneqq f^*\cO_Y(l)$. Consider now the short exact sequence
    \[
    0\to L\otimes \pi^*\mathfrak{m}_u\to L\to L_u\to 0,
    \]
    where $\mathfrak{m}_u\subset \cO_B$ denotes the maximal ideal of $u$. By pushing forward via $f$ we see that $f_u$ is a contraction if and only if $f_*L\to f_{u,*}L_u$ is surjective or, equivalently, if and only if the map
    \begin{equation}\label{e-H^1map}
        R^1f_*L\otimes\sigma^*\mathfrak{m}_u\to R^1f_*L
    \end{equation}
    is injective. By the projection formula we have $R^1f_*L=R^1f_*\cO_X\otimes \cO_Y(l)$ so, upon enlarging $l$, we may assume that the left and right hand side of \autoref{e-H^1map} are generated by global sections, and that $H^i(Y_u,R^1f_{u,*}\cO_{X_u}\otimes \cO_{Y_u}(l))$ and $H^i(Y,R^1f_{*}\cO_{X}\otimes \cO_{Y}(l))$ both vanish for all $u\in B$ and all $i>0$. But then, injectivity of \autoref{e-H^1map} can be checked on global sections, i.e. it is equivalent to the injectivity of 
    \begin{equation}\label{e-H^1inj1}
        R^1\pi_*L\otimes \mathfrak{m}_u \xrightarrow{t_u} R^1\pi_*L.
    \end{equation}
    Since $f_s$ is a contraction then $t_s$ is injective, i.e. $R^1\pi_*L$ is flat around $s$ (\cite[Tag 00MK]{SP}). But flatness is open, hence $t_u$ is injective for all $u$ in a neighborhood of $s$. This concludes the proof of item (1). As for item (2), observe that the argument above tells us that the locus $\lbrace s\in B \textup{ such that }f_s\textup{ is a contraction}\rbrace$ coincides with the locus on $B$ where $R^1\pi_*L$ is flat, and the latter is a dense open (\cite[Tag 0ASY]{SP}).
\end{proof}

\begin{lemma}\label{l-stein_invariance+}
    Let $S$ be either an integral smooth affine curve over a field of characteristic zero or the spectrum of a DVR of positive or mixed characteristic, and let $s\in S$ be a closed point. Let $X$ be an integral normal scheme, let $\pi\colon X\to S$ be a contraction whose fibers are normal varieties, and let $D$ be a semiample Cartier divisor on $X$. Then the following are equivalent.
        \begin{itemize}
            \item[(i)] The restriction map $H^0(X,mD)\to H^0(X_s,mD_s)$ is surjective for all $m\geq 0$ divisible enough;
            \item[(ii)] $f_s$ is a contraction.
        \end{itemize}
        If furthermore $\bQ\subset\cO_S$, or $D_s$ is big, then conditions (i) and (ii) are equivalent to $Y_s$ being normal.
\end{lemma}

\begin{proof}
    We have a commutative diagram
    \begin{equation*}
        \begin{tikzcd}
            X\arrow[rr,"f"]   &           & Y\\
            X_s\arrow[rr,"f_s"]\arrow[u,"i",hook]\arrow[dr,"\bar{f}_s"] &           & Y_s\arrow[u,"j",hook]\\
                & \bar{Y}_s,\arrow[ur,"h_s"] & 
        \end{tikzcd}    
    \end{equation*}
    where the square is Cartesian and the triangle is the Stein factorization of $f_s$. Let $m_0$ be a positive integer such that $m_0D$ and $m_0D_s$ induce the respective semiample contraction, let $m\in m_0\bN$, so that $mD\sim f^*A$ for some very ample divisor $A$ on $Y$, and let $\bar{A}_s\coloneqq h^*_sA_s$. We have induced pullback and restriction maps
    \begin{equation}\label{e-restriction&pullback}
        \begin{tikzcd}
            H^0(X,mD)\arrow[d] & & H^0(Y,A)\arrow[d]\arrow[ll,"f^*",swap]\\
            H^0(X_s,mD_s) & H^0(\bar{Y}_s,\bar{A}_s)\arrow[l,"\bar{f}_s^*",swap] & H^0(Y_s,A_s),\arrow[l,"h^*_s",hook',swap]
        \end{tikzcd}
    \end{equation}
    where $f^*$ and $\bar{f}_s^*$ are isomorphisms by the projection formula. It follows then that $\bar{f}_s$ is the morphism induced by $H^0(X_s,mD_s)$. As $\bar{f}_s$ is a contraction by definition, it is the semiample contraction of $D_s$. As $A$ is ample, upon enlarging $m_0$ we may assume that the rightmost vertical map in \autoref{e-restriction&pullback} is surjective. Hence we see that the leftmost vertical map is a surjection if and only if $h_s^*$ is an isomorphism. We conclude by \autoref{p-AmpleUH}.

    As for the ``furthermore'' part, if $f_s$ is a contraction then clearly $Y_s$ is normal. The converse follows from \autoref{l-finitepartSFconnfibisUH}.
\end{proof}

\begin{theorem}\label{t-normalimageK_X+Dnonpositivefibration}
    Let $C$ be an integral smooth affine curve over a field of characteristic zero. Let $\pi\colon (X,\Delta)\to C$ be a projective family of klt pairs. Let $f\colon X\to Y$ be a contraction of $C$-schemes such that either
    \begin{enumerate}
        \item $K_X+\Delta$ is semiample and $f$ is the associated contraction; or
        \item $K_X+\Delta$ is $f$-antiample.
    \end{enumerate}
    Then $Y_c$ is normal for all closed points $c\in C$.
\end{theorem}

\begin{proof}
Consider case (b) first. Fix $c\in C$ a closed point and let $H$ denote an ample divisor on $Y$. If $H$ is sufficiently positive, then $-K_X-\Delta+f^*H$ is ample. In particular, by Bertini theorem (\cite[Theorem 4.8]{KolSOP}) we can find a general element $\Theta\in |-K_X-\Delta+f^*H|_{\bQ}$ so that, after possibly shrinking $C$ around $c$ we have that $\pi\colon (X,\Delta+\Theta)\to C$ is still a family of klt pairs, $K_X+\Delta+\Theta$ is semiample, and $f$ is the associated contraction. Hence we have reduced (b) to (a).

Let us consider case (a) now. The pairs $(X,\Delta)$ and $(X,\Delta+X_c)$ induce generalized polarized pairs $(Z,M_Z+B_Z)$ and $(Z,M_Z+B_Z+Z_c)$ with glc singularities by \cite[Lemma 6.8]{Birlcfib}. We have the following commutative diagram
\[
		\xymatrix{
			X' \ar[d]_{f'}   \ar[r]^{\alpha} &  X \ar[d]^{f}  \\
			Z' \ar[r]^{\beta} &  Z
		}
		\]
where both $\alpha$ and $\beta$ are projective birational morphisms, with $X'$ and $Z'$ smooth. If we define $K_{X'}+\Delta'+X_c'$ by crepant pullback of $K_X+\Delta+X_c$, this sublc pair induces another generalized polarized pair structure $(Z',M_{Z'}+B_{Z'}+Z_c')$ with subglc singularities and such that $M_Z$ is nef. We have the ramification formula
$$K_{Z'}+M_{Z'}+B_{Z'}+Z_c'=\beta^*(K_Z+M_Z+B_Z+Z_c).$$
Note that, unlike $B_Z$ and $Z_c$, it may be the case that $B_{Z'}$ and $Z_c'$ have common components.

We claim that $(Z',B_{Z'}+Z'_c)$ is subplt. Note that we can assume that $(Z',B_{Z'}+Z'_c)$ and $(X',\Delta'+X_c')$ are log smooth. By \cite[Corollary 2.31]{KM} it is then enough to show that if $E$ is a prime divisor on $Z'$ such that $E\neq \beta_*^{-1}Z_c$, then $\coeff_E(B_{Z'}+Z_c')<1$, i.e. $\lct_E(X',\Delta'+X_c';f'^*E)>0$. There are now two possibilities: if $E$ is horizontal over $C$, then  $\lct_E(X',\Delta'+X_c';f'^*E)=\lct_E(X',\Delta';f'^*E)>0$, where the latter inequality is a consequence of $(X',\Delta')$ being subklt. If $E$ is vertical over $C$, then we may assume it is supported over $c$, after possibly shrinking $C$. As $Z_c$ is irreducible and $E\neq \beta_*^{-1}Z_c$ we then have that $E$ is $\beta$-exceptional. Hence $f'^*E$ is a sum of $\alpha$-exceptional components of $X_c'$, in particular $f'^*E$ does not contain $\alpha_*^{-1}X_c$ in its support. As $(X',\Delta'+X_c')$ is subplt, we then have $\lct_E(X',\Delta'+X_c';f'^*E)>0$, thus the claim is proven. 

Let $A$ and $G$ be $\bQ$-divisors, such that $A$ is ample and $G\geq 0$ and $K_{Z'}+M_{Z'}+B_{Z'}+Z'_c\sim_{\bQ}A+G$. Let $\epsilon$ be a very small positive rational number. After possibly replacing $G$ by $G-\coeff_{\beta_*^{-1} Z_c}(G)Z_c'$, we may assume that $G$ does not contain $\beta_*^{-1}Z_c$ in its support, hence $K_{Z'}+B_{Z'}+Z'_c+\epsilon G$ is still subplt. Further, as $M_{Z'}$ is nef, we have $M_{Z'}+\epsilon A$ is $\bQ$-linearly equivalent to $\delta H$, where $H\geq 0$ is a general very ample divisor and $0<\delta\ll 1$. Thus we get a obtain pair $K_{Z'}+B_{Z'}+Z_c'+\epsilon G+\delta H$
which is still subplt and such that $\beta_*^{-1}Z_c$ is its only plt center. Note that all the components of $B_{Z'}+Z_c'+\epsilon G$ with negative coefficient are exceptional over $Z$. As
$$K_{Z'}+B_{Z'}+Z_c'+\epsilon G+\delta H\sim_{\bQ}(1+\epsilon)(K_{Z'}+M_{Z'}+B_{Z'}+Z_c')$$
we have that $\beta$ is a $(K_{Z'}+B_{Z'}+Z_c'+\epsilon G+\delta H)$-trivial contraction, hence $K_Z+\beta_*(B_{Z'}+Z_c'+\epsilon G+\delta H)$ is plt by \cite[Lemma 3.38]{KM}. In particular, $Z_c$ is a plt center, hence it is normal by \autoref{l-pltcentersnormalUH}.
\end{proof}

\begin{theorem}\label{t-formallift_(X,L)_Lsa}
    Let $X_0$ be a proper normal $k$-scheme, let $L_0$ be a semiample line bundle on $X_0$, and let $f_0\colon X_0\to Y_0$ be its semiample contraction. Suppose $R^1f_{0,*}\cO_{X_0}=0$. Then there exists $m_0\geq 1$ such that, for any formal lifting $(\frX,\frL)\to\Spf (R)$ of $(X_0,L_0)$, the restriction map
    \[
    H^0(\frX,\frL^m)\to H^0(X_0,L_0^m)
    \]
    is surjective for all $m\in m_0\bN$.
\end{theorem}

\begin{proof}
We can express our formal scheme as the limit of a series of Cartesian extensions
\begin{center}
        \begin{tikzcd}
        X_0\arrow[d]\arrow[r,hook]&\cdots\arrow[r,hook]&X_{n}\arrow[d]\arrow[r,hook] & X_{n+1}\arrow[d]\arrow[r,hook]&\cdots\arrow[r,hook]&\frX\arrow[d]\\
        \Spec (R_0)\arrow[r,hook]&\cdots\arrow[r,hook]&\Spec (R_{n})\arrow[r,hook] & \Spec (R_{n+1})\arrow[r,hook] & \cdots\arrow[r,hook] & \Spf (R) 
        \end{tikzcd}
\end{center}
where $\ker(R_{n+1}\to R_n)$ is a square-zero principal ideal which, as an $R_{n+1}$-module, is isomorphic to $R_{r}$ for some $r\leq n$. Let $m_0\geq 1$ be such that $f_0$ is the map induced by $|L_0^{m_0}|$, and let $A_0$ be a very ample line bundle on $Y_0$ such that $L_0^{m_0}\cong f_0^*A_0$. Upon replacing $m_0$ with a multiple we may also assume $H^j(Y_0,A_0^l)=0$ for all $j,l\geq 1$. 

Let ($\mathrm{P}_n$) be the following property:
\begin{itemize}
    \item[($a_n$)] the restriction map $H^0(X_i,L_i^m)\to H^0(X_{i-1},L_{i-1}^m)$ is surjective for all $i\leq n$ and all $m\in m_0\bN$; and
    \item[($b_n$)] there exist Cartesian diagrams
    \begin{center}
        \begin{tikzcd}
        X_0\arrow[d,"f_0"]\arrow[r,hook] & X_1\arrow[d,"f_1"]\arrow[r,hook] & \cdots\arrow[r,hook] & X_n\arrow[d,"f_n"]\\
        Y_0\arrow[r,hook]\arrow[d] & Y_1\arrow[r,hook]\arrow[d] & \cdots\arrow[r,hook] & Y_n\arrow[d]\\
        \Spec (R_0)\arrow[r,hook] & \Spec (R_1)\arrow[r,hook] & \cdots\arrow[r,hook] & \Spec (R_n)\\
        \end{tikzcd}
    \end{center}
    where the $Y_i$ are flat and proper $R_i$-schemes with a very ample line bundle $A_i$ such that $A_i|_{Y_{i-1}}=A_{i-1}$, $f_i^*A_i\cong L_i^{m_0}$ , and the $f_i$ are contractions such that $R^1f_{i,*}\cO_{X_i}=0$ for all $i\leq n$.
\end{itemize}
We will show by induction that ($\mathrm{P}_n$) holds for all $n\geq 0$. 
Note that ($\mathrm{P}_0$) clearly holds. Assume now ($\mathrm{P}_n$) and consider the short exact sequence
\[
0\to L_{r}^m\to L_{n+1}^m\to L_n^m\to 0.
\]
To show ($a_{n+1}$) it suffices to show the vanishing $H^1(X_{r},L_{r}^m)=0$. Writing $m=lm_0$ for some $l\geq 1$, by the Grothendieck spectral sequence and projection formula we have an exact sequence 
\[
0\to H^1(Y_{r},A^l_{r}) \to H^1(X_{r},L_{r}^m)\to H^0(Y_{r}, R^1f_{r,*}\cO_{X_{r}}\otimes A^l_{r}).
\]
The rightmost term vanishes, as $R^1f_{r,*}\cO_{X_r}=0$ by induction. As for the leftmost one, note that by base change we have an injective map
\[
H^1(Y_{r},A_{r}^l)\otimes_R R_0\hookrightarrow H^1(Y_0,A_0^l)=0.
\]
Hence ($a_{n+1}$) holds. To show ($b_{n+1}$), observe that by ($a_{n+1}$) the line bundle $L^m_{n+1}$ induces a morphism $f_{n+1}\colon X_{n+1}\to Y_{n+1}$ extending $f_{n}$, and that we have a very ample line bundle $A_{n+1}$ such that $f_{n+1}^*A_{n+1}\cong L_{n+1}^m$. Also, note that $f_{n+1}$ is a contraction by \autoref{l-f_scontraction_fcontraction}. By taking $Rf_{n+1,*}$ of the short exact sequence
\[
0\to \cO_{X_{r}}\to \cO_{X_{n+1}}\to \cO_{X_n}\to 0
\]
we get the vanishing $R^1f_{n+1,*}\cO_{X_{n+1}}=0$ and a short exact sequence
\[
0\to \cO_{Y_{r}}\to \cO_{Y_{n+1}}\to \cO_{Y_n}\to 0.
\]
In particular, $Y_{n+1}$ is flat over $R_{n+1}$ (\cite[Tag 00MD]{SP}).
\end{proof}


\section{MMP in families of \texorpdfstring{$TNR_1$}{TNR1} pairs} \label{s-MMPfam}

\begin{definition}
    Let $\pi\colon X \to B$ be a projective morphism and let $D$ be a $\bQ$-Cartier $\bQ$-divisor on $X$. The \emph{diminished stable base locus of $D$ over $B$}  is
    \[
    \mathbf{B}_{-}(D/B) = \bigcup_{A \ \pi\text{-ample }\bQ\text{-divisor}} \mathbf{B}(D+A/B),
    \]
    where $\mathbf{B}(D+A/B)$ is the stable base locus of $D+A$ over $B$. If $B$ is clear from the context, we will simply write $\mathbf{B}_{-}(D)$.
\end{definition}

\begin{lemma}[{\cite[Lemma 2.26]{BBS}}]\label{l-dimbaselocus_and_MMP}
    Let $(X,\Delta)$ be a projective klt pair over a regular integral affine scheme $B$. Let $f\colon X\dashrightarrow Y$ be a birational step of the $(K_X+\Delta)$-MMP over $B$, let $\Gamma\coloneqq f_*\Delta$, and let 
    \begin{equation*}
        \begin{tikzcd}
                                            & W\arrow[dl,"\alpha",swap]\arrow[dr,"\beta"] & \\
            X \arrow[rr,dashed,"f"]         &                                             & Y
        \end{tikzcd}        
    \end{equation*}
    be a resolution of indeterminacies. Then $\beta^{-1}\mathbf{B}_{-}(K_Y+\Gamma) \subset \alpha^{-1}\mathbf{B}_{-}(K_X + \Delta).$
\end{lemma}

\begin{theorem}\label{t-MMP-topfamilies}
   Let $S$ be either an integral smooth affine curve over a field of characteristic zero or the spectrum of a DVR of positive or mixed characteristic, and let $s\in S$ be a closed point. Let $\pi_X\colon (X,\Delta)\to S$ be a projective family of $TNR_1$ pairs. Assume
   \begin{itemize}
       \item[(I)] the pair $(X,\Delta+X_s)$ is plt;
       \item[(II)] if $V\subset\mathbf{B}_-(K_X+\Delta)$ is a non-canonical center of $(X,\Delta+X_s)$, then $V$ is $\pi_X$-horizontal.
   \end{itemize}
   Let $f\colon X\dashrightarrow Y$ be a step of the $(K_X+\Delta)$-MMP over $S$. Assume either
   \begin{itemize}
       \item[(a)] $S$ is of equicharacteristic zero; or 
       \item[(b)] $S$ is of positive or mixed characteristic.
   \end{itemize}

   If $f$ is a $(K_X+\Delta)$-Mori fiber space then, respectively,
   \begin{itemize}
       \item[(a)] $f_s$ is a $(K_{X_s}+\Delta_s)$-Mori fiber space as well;
       \item[(b)] there is a commutative diagram
       \begin{equation*}
           \begin{tikzcd}
            X_s^\nu\arrow[d,"\nu"]\arrow[dr,"f_s^\nu"]\arrow[r,"\bar{f_s}"] & \bar{Y_s}\arrow[d,"h_s"]\\
               X_s\arrow[r,"f_s"]     & Y_s
           \end{tikzcd}
       \end{equation*}
       where $f_s^\nu=h_s\circ \bar{f_s}$ is the Stein factorization, $\bar{f_s}$ is a $(K_{X_s^\nu}+\Delta_s^\nu)$-Mori fiber space, and $h_s$ is a finite universal homeomorphism.
   \end{itemize}

   If $f$ is a $(K_X+\Delta)$-negative birational contraction then, respectively,
   \begin{itemize}
       \item[(a)] $f_s$ is a $(K_{X_s}+\Delta_s)$-negative birational contraction;
       \item[(b)] there is a commutative diagram
       \begin{equation*}
           \begin{tikzcd}
            X_s^\nu\arrow[d,"\nu_X"]\arrow[r,dashed,"\bar{f}_s"] & Y_s^\nu\arrow[d,"\nu_Y"]\\
               X_s\arrow[r,"f_s",dashed]     & Y_s
           \end{tikzcd}
       \end{equation*}
       where the normalizations $\nu_X$ and $\nu_Y$ are both small universal homeomorphisms, the natural map $\bar{f}_s$ is a $(K_{X_s^\nu}+\Delta_s^\nu)$-negative birational contraction.
   \end{itemize}
   In particular, when $f$ is birational, $\pi_Y\colon (Y,\Gamma\coloneqq f_*\Delta)\to S$ is again a projective family of $TNR_1$ pairs such that (I) and (II) hold. 
\end{theorem}

\begin{remark}\label{r-famtoppairs_char0_fampairs}
    Note that if $\pi_X\colon (X,\Delta)\to S$ is a projective family of $TNR_1$ pairs as in \autoref{t-MMP-topfamilies} and $\bQ\subset  \cO_S$, then $X_s$ is normal since it is a plt center (\autoref{l-pltcentersnormalUH}).
\end{remark}

\begin{proof}
    Suppose $f$ is a Mori fiber space. Then $f_s$ is certainly a morphism with positive-dimensional general fiber by upper-semicontinuity of fiber dimension, and all the fibers will be connected, since $f$ is a contraction. Note furthermore that $K_{X_s}+\Delta_s=(K_X+\Delta)|_{X_s}$ is $f_s$-antiample by construction. 
    \begin{itemize}
        \item[(a)] To conclude it is enough to show that $f_s\colon X_s\to Y_s$ is a contraction. By \autoref{l-finitepartSFconnfibisUH}, this is equivalent to $Y_s$ being normal, hence we conclude by \autoref{t-normalimageK_X+Dnonpositivefibration}.
        \item[(b)] Since $\nu$ is a universal homeomorphism, the induced morphism $f_s^\nu$ also has connected fibers. Hence the existence of the diagram follows by \autoref{l-finitepartSFconnfibisUH}. As $K_{X_s^\nu}+\Delta_s^\nu=\nu^*(K_{X_s}+\Delta_s)$ and $h_s$ is finite, we have that $K_{X_s^\nu}+\Delta_s^\nu$ is $\bar{f}_s$-antiample.
    \end{itemize}
    
    Suppose now that $f$ is birational. As $f$ does not extract divisors, we have that $Y_s$ is irreducible. By rigidity (\cite[Lemma 1.6]{KM}) we also have that $Y_s$ cannot be contracted, hence $f_s$ is birational. Furthermore, as $(X,\Delta+X_s)$ is plt, so will be $(Y,\Gamma+Y_s)$ by \autoref{l-negcontimprovesings}.

    Suppose $f$ is a divisorial contraction. It follows that $f_s$ is a proper birational morphism with connected fibers.
    \begin{itemize}
        \item[(a)] Again, it is enough to show that $f_s\colon X_s\to Y_s$ is a contraction. By \autoref{l-finitepartSFconnfibisUH}, this is equivalent to $Y_s$ being normal, hence we conclude by \autoref{t-normalimageK_X+Dnonpositivefibration}.
        \item[(b)] In this case we have a commutative diagram
        \begin{equation*}
           \begin{tikzcd}
            X_s^\nu\arrow[d,"\nu_X"]\arrow[dr,"f_s^\nu"]\arrow[r,"\bar{f_s}"] & Y_s^\nu\arrow[d,"\nu_Y"]\\
               X_s\arrow[r,"f_s"]     & Y_s,
           \end{tikzcd}
       \end{equation*}
       where $f_s^\nu=\nu_Y\circ \bar{f}_s$ is the Stein factorization. Note that $\nu_Y$ is a small universal homeomorphism, since $Y_s$ is a plt center (\autoref{l-pltcentersnormalUH}). As $K_{X_s^\nu}+\Delta_s^\nu=\nu_X^*(K_{X_s}+\Delta_s)$ and the latter is $f_s$-antiample, we have that $K_{X_s^\nu}+\Delta_s^\nu$ is $\bar{f}_s$-antiample as well. In particular, $\bar{f}_s$ is a $(K_{X_s^\nu}+\Delta_s^\nu)$-negative birational contraction by the negativity lemma (\cite[Lemma 1.17]{Kol_SMMP}).
    \end{itemize}

    Suppose now that $f$ is a flip. In both cases (a) and (b) we have a commutative diagram
    \begin{equation*}
           \begin{tikzcd}
            X_s^\nu\arrow[d,"\nu_X"]\arrow[r,dashed,"\bar{f}_s"] & Y_s^\nu\arrow[d,"\nu_Y"]\\
               X_s\arrow[r,"f_s",dashed]     & Y_s,
           \end{tikzcd}
       \end{equation*}
       where $\nu_Y$ is again a small universal homeomorphism as $Y_s$ is a plt center (both $\nu_X$ and $\nu_Y$ will be isomorphisms in case (a)). Our goal is to show that $f_s$ does not extract divisors. By contradiction, suppose this is not the case and let $D\subset Y_s$ be a prime divisor such that $N\coloneqq \textup{center}_{X_s}(D)$ has codimension $\geq 2$ in $X_s$. As $f$ is not an isomorphism at the generic point of $N$ we have $N\subset \mathbf{B}_-(K_X+\Delta)$. Furthermore \autoref{l-negcontimprovesings} yields
       \[
       a(D;X_s,\Delta_s)< a(D;Y_s,\Gamma_s)\leq 0,
       \]
       i.e. $N$ is a non-canonical center of $(X_s,\Delta_s)$. But now easy adjunction (\cite[Theorem 17.2]{FA}) yields
       \[
       0>\textup{totaldiscrep}(N;X_s,\Delta_s)\geq \textup{discrep}(N;X,\Delta+X_s).
       \]
       So $N$ is a $\pi_X$-vertical non-canonical center of $(X,\Delta+X_s)$ which is contained in $\mathbf{B}_-(K_X+\Delta/S)$, contradicting (II).

       By \autoref{l-negcontimprovesings} we have that $\pi_Y\colon (Y,\Gamma)\to S$ is again a projective family of $TNR_1$ pairs such that $(Y,\Gamma+Y_s)$ is plt. Suppose now that $V$ is a non-canonical center of $(Y,\Gamma+Y_s)$ and take a model $Z$ dominating $X$ and $Y$, and containing an exceptional divisor $E$ such that $V=\textup{center}_Y(E)$ and $a(E,Y,\Gamma+Y_s)<0$. Then by \autoref{l-negcontimprovesings} we must have $a(E,X,\Delta+X_s)\leq a(E,Y,\Gamma+Y_s) < 0$, hence the image $W$ of $E$ on $X$ is a non-canonical center of $(X,\Delta+X_s)$. By \autoref{l-dimbaselocus_and_MMP} if $V\subset \bB_-(K_Y + \Gamma)$ then we have $W \subset \bB_-(K_X + \Delta)$ as well. In which case $W$ is horizontal and hence so is $V$, therefore (II) holds as well.
\end{proof}

\begin{remark}[Flatness and higher dimensional bases]
    If $\bQ\subset \cO_S$ and $\pi_X\colon (X,\Delta)\to S$ is a projective family of pairs as in \autoref{t-MMP-topfamilies} the arguments in \autoref{t-MMP-topfamilies} extend to the case where $S$ is a smooth regular affine scheme of dimension $>1$ (see \cite[Lemmas 3.1 and 3.2]{HMX}). A somewhat delicate point is enforcing the flatness of $\pi_Y$. In characteristic zero one can argue as follows (we assume $\Delta=0$ for simplicity): let $\pi\colon X\to B$ be a projective family of normal $\bQ$-Gorenstein varieties with klt singularities. In particular $X$ itself has klt singularities. Suppose $f\colon X\dashrightarrow Y$ is a step of the $K_X$-MMP over $B$. If $f$ is birational then $Y$ is also klt, in particular Cohen-Macaulay. If $f$ is a Mori fiber space, then a canonical bundle formula argument analogous to the one in \autoref{t-normalimageK_X+Dnonpositivefibration} can be used to construct a boundary $\Delta_Y$ on $Y$ such that $(Y,\Delta_Y)$ is also klt hence, again, Cohen-Macaulay. The argument of \autoref{t-MMP-topfamilies} shows that $\pi_Y\colon Y\to B$ is equidimensional and $B$ is regular, we conclude in both cases by ``miracle flatness'' (\cite[Tag 00R4]{SP}).

    In positive characteristic this kind of argument breaks: for one, there is no canonical bundle formula. More importantly, there are klt singularities which are not Cohen-Macaulay (\cites{Tot_failKVfano, Tot_terminalnonCM3fold, Yas}). In this paper we will not address the higher-dimensional base case of \autoref{t-MMP-topfamilies}. However, it should be noted that even in situations where flatness is automatic (i.e. $S$ regular integral and of dimension one), relative MMP can fail to restrict to fiberwise MMP, even in families with terminal singularities (\cite{Bri_NEMMP}). For our purposes, the following easy result will suffice.
\end{remark}

\begin{proposition}\label{p-relativeMMPgeneralfiber}
    Let $k$ be a field, and let $B$ be an integral regular affine $k$-scheme. Let $\pi\colon X\to B$ be a projective family of smooth varieties. Let $\varrho\colon X\dashrightarrow Y$ be a $K_X$-minimal model over $B$. Then for a general closed point $s\in B$, the map $\varrho_s\colon X_s\dashrightarrow Y_s$ is a $K_{X_s}$-minimal model.
\end{proposition}

\begin{proof}
    Consider the following commutative diagram
    \begin{center}
        \begin{tikzcd}
             & W\arrow[dl,"p",swap]\arrow[dr,"q"] & \\
            X\arrow[rr,dashed,"\varrho"]\arrow[dr,"\pi",swap] &  & Y\arrow[dl]\\
             & B, & 
        \end{tikzcd}
    \end{center}
    where $W$ is the normalized closure of the graph of $\varrho$ inside $X\times_B Y$. As $\varrho$ is a $K_X$-negative birational contraction, we have
    \[
    p^*K_X=q^*K_Y+E
    \]
    for some effective $q$-exceptional divisor $E$ containing $q^{-1}_*\Exc(\varrho)$. Modulo shrinking $B$, we may assume
    \begin{itemize}
        \item $W\to B$ is flat and $W_s$ is the normalized closure of the graph of $\varrho_s$ inside $X_s\times Y_s$ for all $s\in B$;
        \item $E_s$ is an effective $q_s$-exceptional divisor, containing $q^{-1}_{s,*}\Exc(\varrho_s)$ for all $s\in B$.
    \end{itemize}
    As $q$ is a contraction, by point (2) of \autoref{l-f_scontraction_fcontraction} so is $q_s$ for all $s$ in a dense open. In particular, for all such $s$, the fiber $Y_s$ is normal. As $K_Y$ is $\bQ$-Cartier and nef, so is $K_{Y_s}$. Hence $\varrho_s$ is a $K_{X_s}$-minimal model for all such $s$. 
\end{proof}

\begin{theorem}\label{t-relcanstratification}
Let $k$ be an algebraically closed field of characteristic $p>5$. Let $B$ be a smooth affine $k$-variety and let $\pi\colon X\to B$ be a projective family of smooth general type 3-folds. Then, upon replacing $B$ by a dense open subset, the following hold:
\begin{itemize}
    \item[(1)] the canonical ring $R(X/B,K_X)=\bigoplus_{m\geq 0}H^0(X,mK_X)$ is a finitely generated graded $\cO_B$-algebra;
    \item[(2)] letting $f\colon X\dashrightarrow X^\Can\coloneqq \relProj_B (R(X/B,K_X))$ be the canonical model of $X$ over $B$, then $f_s\colon X_s\dashrightarrow (X^\Can)_s=(X_s)^\Can$ is the canonical model of $X_s$ for all $s\in B$ closed.
\end{itemize} 
\end{theorem}

\begin{proof}
By \cite[Corollary 1.10]{DW}, after replacing $B$ by a dense open, we can run a $K_X$-MMP over $B$, say $\varrho\colon X\dashrightarrow X^{\m}$, which terminates with a good minimal model. In particular the canonical ring $R(X^{\m}/B,K_{X^{\m}})$ is finitely generated since $K_{X^{\m}}$ is semiample. Since $\varrho$ is a $K_X$-negative birational contraction we have that $K_{X^{\m}}$ and $K_X$ have the same canonical ring, modulo passing to a Veronese subring (\autoref{r-non_positive_contractions_preserve_sections}). In particular, (1) holds.

Modulo further shrinking $B$, we may assume that $\varrho_s\colon X_s\dashrightarrow (X^\m)_s=(X_s)^\m$ is a good minimal model for all $s\in B$, by \autoref{p-relativeMMPgeneralfiber}. Let $g\colon X^\m\to X^\Can$ be the semiample contraction of $K_{X^\m}$ over $B$ (i.e. the canonical model of $K_{X^\m}$). By \cite[Corollary 1.3]{ABL} the canonical model $(X_s)^\Can$ has rational singularities, hence by \autoref{t-extension_R^1_vanishing}(b)\footnote{Note that the proof of \autoref{t-extension_R^1_vanishing}(b) depends only on \autoref{t-formallift_(X,L)_Lsa}} $g_s\colon X^\m_s\to (X^\Can)_s=(X_s)^\Can$ is the canonical model of $K_{X^\m_s}$ for all $s\in B$. As we have a factorization
\begin{center}
    \begin{tikzcd}
        f\colon X\arrow[r,"\varrho",dashed] & X^\m\arrow[r,"g"] & X^\Can,
    \end{tikzcd}
\end{center}
we conclude.

\end{proof}


\section{Proofs}\label{s-proofs}


\begin{proof}[Proof of \autoref{t-MMP+ABU=>AIP/p (charfree)}]

Note that $(X,\Delta+X_s)$ is plt by \cite[Theorem 4.9]{Kol_SMMP}. We can run a $(K_X+\Delta)$-MMP over $S$, which will terminate with either a Mori fiber space or a good minimal model. More precisely, by \autoref{t-MMP-topfamilies}, we have a commutative diagram
\begin{equation*}
    \begin{tikzcd}
        (X,\Delta)\arrow[r,"\varrho",dashed]   & (Y,\Gamma)\arrow[r,"f"]       & Z\\
                                  & (Y_s,\Gamma_s)\arrow[r,"f_s"]\arrow[u,hook]                  & Z_s\arrow[u,hook]\\
        (X_s,\Delta_s)\arrow[uu,hook]\arrow[ru,"\varrho_s",dashed]\arrow[r,"\bar{\varrho}_s",dashed]                        & (Y_s^\nu,\Gamma_s^\nu)\arrow[r,"\bar{f}_s"]\arrow[u,"\nu"]              & \bar{Z}_s,\arrow[u,"h_s"]
    \end{tikzcd}
\end{equation*}
where $\Gamma\coloneqq \varrho_*\Delta$ and
\begin{itemize}
    \item[(1)] $\varrho$ is a (birational) $(K_X+\Delta)$-MMP over $S$,
    \item[(2)] $\bar{\varrho}_s$ is a (birational) $(K_{X_s}+\Delta_s)$-MMP,
    \item[(3)] if $K_X+\Delta$ is not pseudoeffective then $f$, resp. $\bar{f}_s$, is a $(K_Y+\Gamma)$-, resp. $(K_{Y_s^\nu}+\Gamma_s^\nu)$-, Mori fiber space,
    \item[(4)] if $K_X+\Delta$ is pseudoeffective then $f$ and $\bar{f}_s$ are the semiample contractions of $K_Y+\Gamma$ and $K_{Y_s^\nu}+\Gamma_s^\nu$, respectively, and
    \item[(5)] in both cases $h_s$ is a universal homeomorphism by \autoref{l-finitepartSFconnfibisUH}.
\end{itemize}
By (2) and \autoref{r-non_positive_contractions_preserve_sections} we have $H^0(X_s,m(K_{X_s}+\Delta_s))\cong H^0(Y_s^\nu,m(K_{Y_s^\nu}+\Gamma_s^\nu))$ for all $m\geq 0$ divisible enough. If $K_X+\Delta$ is not pseudoeffective, then $H^0(Y_s^\nu,m(K_{Y_s^\nu}+\Gamma_s^\nu))=0$ for all $m\geq 1$, hence we conclude by upper-semicontinuity of cohomology.

Suppose now that $K_X+\Delta$ is pseudeffective. 

We have an induced commutative diagram of pullback and restriction maps.
\begin{equation*}
    \begin{tikzcd}
        H^0(X,m(K_X+\Delta))\arrow[dd,"\res_{X_s}"] & H^0(Y,m(K_Y+\Gamma))\arrow[l,"\varrho^*",hook', two heads]\arrow[d,"\res_{Y_s}"]       & H^0(Z,mA)\arrow[l,"f^*",hook', two heads]\arrow[d,twoheadrightarrow,"\res_{Z_s}"]\\
                                                    & H^0(Y_s,m(K_{Y_s}+\Gamma_s))\arrow[dl,"\varrho_s^*",swap]\arrow[d,"\nu^*",hook']                  & H^0(Z_s,mA_s)\arrow[l,"f_s^*",hook']\arrow[d,"h_s^*",hook']\\
        H^0(X_s,m(K_{X_s}+\Delta_s)                 & H^0(Y^\nu_s,m(K_{Y^\nu_s}+\Gamma^\nu_s))\arrow[l,"\bar{\varrho}_s^*",hook', two heads] & H^0(\bar{Z}_s,m\bar{A}_s)\arrow[l,"\bar{f}_s^*",hook', two heads]
    \end{tikzcd}    
\end{equation*}

Note that the morphisms $\varrho,\bar{\varrho}_s,f$, and $\bar{f}_s$ are $(K_X+\Delta)$-, $(K_{X_s}+\Delta_s)$-non-positive contractions, respectively $(K_Y+\Gamma)$-, $(K_{Y_s^\nu}+\Gamma_s^\nu)$-ample models, hence their induced pullback maps in the diagram above are all isomorphisms by \autoref{r-non_positive_contractions_preserve_sections}. By Serre vanishing, after possibly replacing $m$ with a multiple, we may assume that $\res_{Z_s}$ is surjective.

If $S$ is of equicharacteristic zero, $\nu$ and $h_s$ are isomorphisms by \autoref{l-pltcentersnormalUH} and \autoref{t-normalimageK_X+Dnonpositivefibration}, hence we conclude.

Suppose now $S$ is of positive or mixed characteristic. Let $\xi\in H^0(X_s,m(K_{X_s}+\Delta_s))$, and let $\bar{\xi}\in H^0(\bar{Z}_s,m\bar{A}_s)$ be the unique section such that $\xi=\bar{\varrho}_s^*\bar{f}_s^*\bar{\xi}$. Then $\xi$ extends if and only if $\bar{\xi}$ descends to $Z_s$. By \autoref{l-keelfrobeniusdescent} there exists $e\geq 0$, depending only on the morphism $h_s\colon \bar{Z}_s\to Z_s$ such that $\bar{\xi}^{p^e}$ descends to $Z_s$, for all $m$ and all $\bar{\xi}\in H^0(\bar{Z}_s,m\bar{A}_s)$.
\end{proof}


\begin{proof}[Proof of \autoref{t-extension_R^1_vanishing}]
    By \autoref{l-lifting_infinitesimals_implies_lifting_in_family} it is enough to consider the infinitesimal deformation case, which follows at once from \autoref{t-formallift_(X,L)_Lsa}, letting $\mathfrak{L}\coloneqq \omega_{X/R}^{[l]}(l\Delta)$ for some $l\geq 1$ sufficiently divisible.

    To prove items (a) and (b) it then suffices to show the vanishing $R^1f_{s,*}\cO_{X_s}$ under the corresponding hypotheses.
    
    In case (a), by \cite[Lemma 5.5]{Ejiri_Alb}, we have that $R^1f_{s,*}\cO_{X_s}$ is a vector bundle. Hence it suffices to show the vanishing $H^1(X_{s,y},\cO_{X_{s,y}})=0$, where $y\in Y_s$ is a general closed point. Since $f_s$ is locally $F$-split, the fiber $X_{s,y}$ is a globally $F$-split variety (\cite[Proposition 5.7]{ejiri}), hence the (twisted) Grothendieck trace map
    \[
    F_{X_{s,y},*}\cO_{X_{s,y}}((1-p^e)K_{X_{s,y}})\to\cO_{X_{s,y}}
    \]
    is split surjective for all $e\geq 0$ divisible enough. But $H^1(X_{s,y},(1-p^e)K_{X_{s,y}})=0$ for all $e\geq 0$ large enough by Serre vanishing, since $-K_{X_{s,y}}\sim_{\bQ}\Delta_{s,y}$ and the latter is ample, thus we conclude.

    Case (b) is immediate, since $Rf_{s,*}\cO_{X_s}$ is concentrated in degree zero by definition of rational singularities.
\end{proof}


\begin{proof}[Proof of \autoref{c-extension_gt_3folds}]
By \cite[Theorem 1.2]{HW} we can run a $(K_X+\Delta)$-MMP over $S$
\[
\varrho\colon (X,\Delta)\dashrightarrow (X^{\textup{m}},\Delta^{\textup{m}})
\]
which terminates with a minimal model. As $X$ is $\bQ$-factorial, so is $X^\Min$, in particular $(X^\Min,(X^\Min)_s)$ is plt. By \cite[Lemma 6.1]{HW} $(X^\Min)_s$ is normal\footnote{Note that \cite[Lemma 6.1]{HW} is stated in mixed characteristic but the same proof works in equicharacteristic $p>5$.}. In particular $\varrho_t$ is a $(K_{X_t}+\Delta_t)$-MMP for all $t\in S$, and $K_{X^\Min_s}+\Delta^\Min_s$ is big and semiample by the basepoint-free theorem (\cite[Theorem 1.1]{Xu_BPF}). Since MMP preserves $m$-pluricanonical sections for $m\geq 0$ divisible enough, and $\Proj(R(X_s,K_{X^\Min_s}+\Delta^\Min_s))$ has rational singularities by \cite[Corollary 1.3]{ABL}, we conclude by \autoref{t-extension_R^1_vanishing}(b).
\end{proof}


\begin{proof}[Proof of \autoref{t-bounded_3fold_canonical_models'}]

We first consider the case where $k$ is algebraically closed.

We begin by showing $\cC_v(k)$ is birationally bounded. First, note that if $X\in \cC_v(k)$ is such that $h^1(X,\cO_X)\neq 0$ and $Y\to X$ is a terminalization, then $h^1(Y,\cO_Y)\neq 0$ as well, since $X$ has rational singularities by \cite[Corollary 1.3]{ABL}. Fix now $m\geq 43$: if $X\in\cC_v(k)$, the map $\phi_{|mK_X|}$ is birational by \cite[Theorem 1.3]{zhang}, hence $X$ is birational to a subvariety $Z\subset \bP^N_k$ of degree $\leq m^3v$, where $N$ depends on $X$. After projecting from a general point we may assume that $Z\subset \bP^7_k$.
Let $\Univ_{n,d}(\bP_k^7)\to\Chow_{n,d}(\bP_k^7)$ be the universal family of cycles of dimension $n$ and degree $d$ on $\bP_k^7$\footnote{See \cite[Theorem 3.21]{KolRCAV}: note that the proof does not use the characteristic zero assumption until \cite[Definition 3.25.3]{KolRCAV}, and this is enough to construct the Chow variety together with the universal family for $\bP^N_k$.}. Then we have a birationally bounding family for $\cB$
\begin{equation}\label{e-chowfam}
\bigsqcup_{1\leq d\leq m^3v}\Univ_{3,d}(\bP^7_k)\to \bigsqcup_{1\leq d\leq m^3v}\Chow_{3,d}(\bP^7_k)
\end{equation}
More precisely: we get a surjective projective morphism of finite type $k$-schemes $\beta\colon\cY\to \cS$ such that for every $X\in \cC_v(k)$ there is $s\in \cS$ for which $\cY_s$ is a linear projection of $\phi_{|mK_X|}(X)$. Let $\cB$ be the class of these linear projections, so that $\beta\colon\cY\to\cS$ is a bounding family for $\cB$. By \autoref{l-simnor} we have a bounding family for $\cB^\nu\coloneqq \lbrace Z^\nu \textup{ such that }Z\in \cB\rbrace$,
\[
\bigsqcup_{i\in I}\beta_i\colon \bigsqcup_{i\in I}\cY_i\to \bigsqcup_{i\in I}\cS_i
\]
where each $\beta_i$ is a flat, surjective, projective morphism with geometrically integral and normal fibers of dimension three, each $\cY_i$ is normal, and each $\cS_i$ is smooth. By \autoref{l-simres} we may assume that every $\beta_i$ admits a simultaneous resolution $\cX_i\to \cY_i$. Let $\cX_i\dashrightarrow\cX_i^\Can$ denote the canonical model over $\cS_i$. After possibly replacing each $\cS_i$ with a suitable stratification, by \autoref{t-relcanstratification} we may further assume that $\cX_{i,s}\dashrightarrow(\cX_{i}^\Can)_s$ is the canonical model of $\cX_{i,s}$ for all $i\in I$ and all $s\in \cS_i$. Hence
\[
\bigsqcup_{i\in I}\cX^\Can_i\to \bigsqcup_{i\in I}\cS_i
\]
is a bounding family for $\cC_v(k)$.

Suppose now that $k$ is not algebraically closed and let $X\in \cC_v(k)$. For all $m\geq0 $ such that $mK_X$ is Cartier, we then have that
\begin{itemize}
    \item $H^i(X,mK_X)\otimes_k\bar{k}\cong H^i(X_{\bar{k}},mK_{X_{\bar{k}}})$ for all $i\geq 0$, by flat base change;
    \item $H^0(X,mK_X)$ induces an embedding if and only if $H^0(X_{\bar{k}},mK_{X_{\bar{k}}})$ induces an embedding, since being a closed immersion satisfies fpqc descent (\cite[Proposition 2.7.1]{EGAIV_2}).
\end{itemize}

Since the class $\lbrace X_{\bar{k}} \textup{ such that } X\in \cC_{3,v}(k)\rbrace$ is bounded, we can find $m_0\gg 0$ such that $H^i(X,mK_{X})=0$ for all $i>0$ and all $m\in m_0\bN$. By setting $P(l)\cong \chi(X,lm_0K_X)$ and $N\coloneqq h^0(X,m_0K_X)-1$, we have that the universal family on the Hilbert scheme
\[
\Univ_P(\bP^N_k)\to \textup{Hilb}_{P}(\bP^N_k)
\]
gives a bounding family for $\cC_v(k)$.
\end{proof}


\begin{proof}[Proof of \autoref{c-rational_points}]
    By \autoref{t-bounded_3fold_canonical_models'} we know that the class $\cC_v(k)$ is bounded. Let $\beta\colon \cX\to\cS$ be a bounding family. As $\beta$ is projective, we can find a finite surjective morphism $\cS'\to\cS$ such that the base change
    \[
    \beta'\colon \cX\times_{\cS}\cS'\to\cS'
    \]
    admits a section. By letting $d\coloneqq \deg(\cS'\to \cS)$, we conclude.
\end{proof}


\section{Plurigenera in projective families of smooth complex varieties}\label{s-plurigenera_smooth_complex_varieties}

This section is independent from the rest of the paper. The main result shows that, for projective families of \textit{complex} varieties, invariance of \textit{all} plurigenera is equivalent to invariance of \textit{all sufficiently divisible} plurigenera. We point out that this kind of result is completely false in positive and mixed characteristic: in \cite{Lang} and \cite{KU} the authors construct families of Enriques and elliptic surfaces where all sufficiently divisible plurigenera are invariant, but $h^0(X_s,K_{X_s})>h^0(X_\eta,K_{X_\eta})$. Even more surprisingly, in \cite{Suh} the author constructs examples of families of canonically polarized surfaces such that $h^0(X_s,K_{X_s})$ is arbitrarily larger than $h^0(X_\eta,K_{X_\eta})$.

Our proof closely follows \cite[Theorem 11.5.1]{La2}. We refer to \cite{La2} for the theory of multiplier ideal sheaves.

\begin{theorem}\label{t-AIP_iff_IP}
Let $C$ be a smooth integral complex affine curve, let $\pi\colon X\to C$ be a projective family of smooth varieties, and suppose that $h^0(X_c,mK_{X_c})$ is independent of $c\in C$ for all $m\geq 0$ divisible enough. Then $h^0(X_c,mK_{X_c})$ is independent of $c\in C$ for all $m\geq 0$.
\end{theorem}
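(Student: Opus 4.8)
The plan is to leverage the multiplier-ideal approach to Siu's theorem as presented in Lazarsfeld \cite[Theorem 11.5.1]{La2}, where the only place one needs input beyond general semicontinuity arguments is a statement about the *asymptotic* multiplier ideal on the generic fiber being trivial, or more precisely that sections lift from the special fiber once one is allowed to pass to a sufficiently divisible multiple. The hypothesis we are handed — that $P_m(X_c)$ is independent of $c$ for all sufficiently divisible $m$ — is exactly the ingredient that controls the asymptotic behaviour, so the strategy is to run the Siu–Lazarsfeld induction on $m$ and feed in the hypothesis at the one step where asymptotics are needed.

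Concretely, I would argue as follows. Fix $0\in C$; we want $h^0(X_0, mK_{X_0}) = h^0(X_c, mK_{X_c})$ for every $m\geq 1$ and $c$ near $0$. By upper semicontinuity it suffices to show every section of $mK_{X_0}$ extends to a section of $mK_X$ over a neighbourhood of $0$, i.e. that the restriction map $H^0(X, mK_X)\to H^0(X_0, mK_{X_0})$ is surjective. One sets up the standard extension machine: write $m = \ell q + r$ with $0\le r<\ell$, where $\ell$ is a fixed sufficiently divisible integer for which invariance is known, and one uses the asymptotic multiplier ideal $\mathcal{J}_{\|\ell K_{X_0}\|}$ (equivalently $\mathcal{J}(\|qK_{X_0}\|)$ after rescaling) together with the Nadel/Kawamata–Viehweg-type vanishing and Lazarsfeld's extension theorem \cite[Theorem 11.5.1]{La2}. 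The key point is that the asymptotic multiplier ideal associated to the full linear system $\bigoplus_m H^0(X_0,mK_{X_0})$ is computed by some sufficiently divisible multiple $\ell K_{X_0}$, and the invariance of $P_{\ell}$ guarantees that $H^0(X,\ell K_X)\to H^0(X_0,\ell K_{X_0})$ is surjective, so a divisor in $|\ell K_{X_0}|$ computing that multiplier ideal lifts to $X$; this produces the auxiliary divisor on the total space needed to run Siu's extension argument and push a section of $mK_{X_0}$ to $X$.

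The one subtlety — and the main obstacle — is matching up the "divisible enough" hypothesis with the specific multiple that computes the asymptotic multiplier ideal and that appears in the induction: Lazarsfeld's proof uses a twist by $K_X$ (to make vanishing work) and one must ensure that the multiple $\ell$ chosen to witness the asymptotic multiplier ideal can simultaneously be taken divisible enough that $P_\ell$-invariance is part of our hypothesis. Since the hypothesis is for *all* sufficiently divisible $m$, and the asymptotic multiplier ideal stabilizes, this is a matter of choosing $\ell$ large and divisible, then absorbing the residue $r$ and the extra $K_X$ twist into the same extension step exactly as in \cite[Theorem 11.5.1]{La2}. I would therefore organize the write-up as: (i) reduce to surjectivity of $H^0(X,mK_X)\to H^0(X_0,mK_{X_0})$; (ii) fix a sufficiently divisible $\ell$ witnessing both the asymptotic multiplier ideal on $X_0$ and $P_\ell$-invariance, and lift a computing divisor $D\in|\ell K_{X_0}|$ to $D_X$ on a neighbourhood; (iii) apply the Siu–Lazarsfeld extension theorem with the pair $(X, \tfrac{m-1}{\ell}D_X + X_0)$, or the appropriate rational twist thereof, to extend an arbitrary section of $mK_{X_0}$; (iv) conclude by semicontinuity. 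The analytic content is entirely quoted from \cite{La2}; our only new input is step (ii), which is precisely where the divisibility hypothesis enters.
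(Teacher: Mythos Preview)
Your reduction to surjectivity of restriction and the observation that the divisibility hypothesis lets you lift divisors from $|\ell K_{X_0}|$ are on target. The gap is step (iii): there is no black-box ``Siu--Lazarsfeld extension theorem'' that takes a pair $(X,\tfrac{m-1}{\ell}D_X+X_0)$ and extends an arbitrary $\sigma_0\in H^0(X_0,mK_{X_0})$. The actual extension mechanism (Corollary~\ref{c-OT} here, which packages the relevant content of \cite[\S 11.5]{La2} via the local-freeness Theorem~\ref{t-lfness}) says that $\sigma_0$ extends \emph{provided it already vanishes along} $\cJ(||(m-1)K_X||)_{X_0}$. All the work lies in verifying that vanishing for an arbitrary $\sigma_0$, and your outline does not address it: having lifted a single $D_0\in|\ell K_{X_0}|$ to $D_X$ gives no a priori reason why $\sigma_0$ should vanish along $\cJ(\tfrac{m-1}{\ell}D_X)_{X_0}$, and the phrase ``apply the extension theorem with the pair'' conflates this situation (no positivity assumed on $K_X$) with the general-type bootstrap in \cite{La2}, which uses bigness to globally generate twisted multiplier-ideal sheaves and run an induction on $m$. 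That induction is simply unavailable here.

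The paper supplies the missing step by a different device. One takes the \emph{power} $\sigma_0^{\,l+1}$ for infinitely many $l$ with $m(l+1)$ lying in the set of invariant multiples; since $P_{m(l+1)}$ is constant, $\sigma_0^{\,l+1}$ lifts to $X$ and therefore vanishes along $\cJ(||m(l+1)K_X||_0)$, the asymptotic ideal of the restricted graded linear series. Subadditivity and monotonicity of asymptotic multiplier ideals then give $\sigma_0^{\,l+1}\in\cJ(||(m-1)K_X||_0)^{l}$ for infinitely many $l$, so $\sigma_0$ lies in the integral closure of $\cJ(||(m-1)K_X||_0)$; but multiplier ideals are integrally closed, and Proposition~\ref{l-asymptotic_restriction} then places $\sigma_0$ inside $\cJ(||(m-1)K_X||)_{X_0}$, after which Corollary~\ref{c-OT} applies. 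So the idea you are missing is the power-and-integral-closure trick; lifting one computing divisor is not, by itself, enough to force the required vanishing.
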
 

\begin{theorem}\label{t-lfness}
Let $C$ be a smooth integral complex affine curve, let $\pi\colon X\to C$ be a projective family of smooth varieties, and let $E$ be a Cartier divisor on $X$. Then 
$$R^q\pi_*\left( \cO_X(K_X+E)\otimes \cJ(\vvert{E}) \right)$$
is locally free for all $q\geq 0$.
\end{theorem}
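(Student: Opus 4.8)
The plan is to reduce the local freeness of $R^q\pi_*(\cO_X(K_X+E)\otimes\cJ(\|E\|))$ to a base-change statement, for which it suffices to show that the natural maps
\[
R^q\pi_*\bigl(\cO_X(K_X+E)\otimes\cJ(\|E\|)\bigr)\otimes\kappa(c)\longrightarrow H^q\bigl(X_c,\cO_{X_c}(K_{X_c}+E_c)\otimes\cJ(\|E\|)_c\bigr)
\]
are isomorphisms for every $c\in C$, together with a dimension-count ensuring the ranks of the fiberwise cohomology groups are constant. Following \cite[Theorem 11.5.1]{La2}, the key input is the Ohsawa--Takegoshi-type extension theorem, or rather its algebraic avatar: the restriction map
\[
H^0\bigl(X,\cO_X(m(K_X+E))\bigr)\to H^0\bigl(X_c,\cO_{X_c}(m(K_{X_c}+E_c))\bigr)
\]
being surjective for all $m$ divisible enough (which is essentially the content of the hypotheses feeding \autoref{t-AIP_iff_IP}), combined with the fact that the asymptotic multiplier ideal $\cJ(\|E\|)$ restricts well: $\cJ(\|E\|)|_{X_c}\subseteq\cJ(\|E_c\|)$, with equality when $X_c$ is a sufficiently general fiber, and the subadditivity/restriction theorems for multiplier ideals. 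First I would set up the asymptotic multiplier ideal $\cJ(\|E\|)$ attached to the linear system $|E|$ (or the graded system $\{|mE|\}$), noting $E$ is effectively an arbitrary Cartier divisor so one works with the stable base locus data; here I implicitly assume $h^0(X,mE)$ behaves well in the family, i.e. one first fixes the sub-Iitaka-fibration picture as in the preceding subsections.

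The key steps, in order, are: (i) record the base-change criterion (\cite[Corollary III.12.9]{Har} / cohomology-and-base-change) reducing local freeness to constancy of $h^q(X_c,-)$ and surjectivity of the base-change maps; (ii) prove the restriction inclusion $\cJ(\|E\|)\cdot\cO_{X_c}\subseteq\cJ(\|E_c\|)$ using the Restriction Theorem for multiplier ideals \cite[Theorem 11.1.1]{La2} applied to the smooth divisor $X_c\subset X$, and observe that $K_X+X_c|_{X_c}=K_{X_c}$ since $\pi$ is smooth; (iii) use Nadel vanishing $R^q\pi_*(\cO_X(K_X+E)\otimes\cJ(\|E\|))=0$ for $q>0$ over suitable opens — more precisely, reduce to $q=0$ after twisting down and running an induction, or invoke that the relevant higher direct images vanish after replacing $E$ by $E+($ample$)$ and taking a limit, which is how one gets local freeness uniformly in $q$; (iv) for $q=0$, use that $\pi_*(\cO_X(K_X+E)\otimes\cJ(\|E\|))$ has fibers $H^0(X_c,\cO_{X_c}(K_{X_c}+E_c)\otimes\cJ(\|E_c\|))$ whose dimension is independent of $c$ — this constancy is exactly where the hypothesis of \autoref{t-AIP_iff_IP}, invariance of sufficiently divisible plurigenera, enters, via the identification of these $H^0$'s with the ``true'' (multiplier-ideal-corrected) plurigenera and a semicontinuity-forces-equality argument; (v) conclude local freeness from the base-change theorem.

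The main obstacle I expect is step (ii)--(iv): controlling the \emph{asymptotic} multiplier ideal under restriction to \emph{every} fiber $X_c$, not just the very general one. For the very general fiber the restriction theorem gives equality $\cJ(\|E\|)|_{X_c}=\cJ(\|E_c\|)$ directly, but for special fibers one only gets an inclusion, and one must argue that this inclusion does not drop the dimension of global sections — this is where one needs the surjectivity of pluricanonical restriction maps (an extension theorem in the style of Siu/\cite{BP}/Takayama, or, in Nakayama's algebraic framework, the finite generation and the MMP/Abundance inputs already invoked in the earlier theorems) to force $h^0(X_c,\cO_{X_c}(K_{X_c}+E_c)\otimes\cJ(\|E_c\|))$ to be bounded above by $h^0(X,\cO_X(K_X+E)\otimes\cJ(\|E\|))\otimes\kappa(c)$, matching the generic value, so that semicontinuity pins it down. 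A secondary technical point is making the Nadel-type vanishing work relatively and uniformly in $q$ so that a single $E$ handles all higher direct images simultaneously; the standard trick is to twist by a relatively very ample bundle, prove the statement there by a direct image computation, and then descend, but the bookkeeping with the asymptotic multiplier ideal of the twisted system needs care.
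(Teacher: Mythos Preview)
Your proposal has a genuine gap and a structural misunderstanding of the statement's role.

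First, the circularity: in step (iv) you invoke the conclusion of \autoref{t-AIP_iff_IP} (invariance of sufficiently divisible plurigenera), but \autoref{t-lfness} is precisely the input to \autoref{c-OT}, which in turn is the main ingredient in the proof of \autoref{t-AIP_iff_IP}. So your argument as written is circular. Moreover, \autoref{t-lfness} is stated for an \emph{arbitrary} Cartier divisor $E$, with no hypothesis relating $E$ to $K_X$ or to any plurigenera; there is nothing to ``feed in'' from \autoref{t-AIP_iff_IP}, and your remark that you ``implicitly assume $h^0(X,mE)$ behaves well in the family'' already signals that the strategy does not match the unconditional statement.

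Second, and more fundamentally, the base-change/constancy-of-$h^q$ route is not how the statement is proved, and it is hard to see how it could be made to work for arbitrary $E$ and all $q$ simultaneously (your step (iii) would require some positivity of $E$ that is simply not assumed). The paper's argument never touches the fibers. Instead it passes to a log resolution $f\colon Y\to X$ of $|dE|$ for $d\gg 0$ (so that $\cJ(\|E\|)=\cJ(d^{-1}|dE|)$), writes $f^*|dE|=|U|+F$ with $U$ free and $F$ snc, and after the standard projection-formula and Leray bookkeeping identifies the sheaf in question with $R^q\delta_*\cO_Y(L)$ where $L\sim_{\bR}K_Y+\{F/d\}+U/d$ and $\delta=\pi\circ f$. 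By Bertini one replaces $\{F/d\}+U/d$ by a klt boundary $B$ with $L-(K_Y+B)$ semiample over $C$. The key input is then Koll\'ar--Fujino torsion-freeness (\cite[Theorem 6.3.(i)]{fujino2010}): every nonzero section of $R^q\delta_*\cO_Y(L)$ has support equal to all of $C$, so the sheaf is torsion-free over the PID $\cO_C$, hence locally free. No restriction theorems, no extension theorems, and no fiberwise dimension counts are needed; over a smooth curve, torsion-free already means locally free, and that is the whole point.
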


\begin{proof}
Let $d\gg 0$ so that $\cJ(\vvert{E})=\cJ(d^{-1}|dE|)$. Let $f\colon Y\to X$ be a log resolution of $|dE|$, so that we can write $f^*|dE|=|U|+F$ where $U$ is free, and $F\geq 0$ is snc. Let $\delta\colon Y\to C$ be the induced morphism. By definition of multiplier ideal and the projection formula we have
\[R^q\pi_* \left( \cO_X(K_X+E) \otimes \cJ(\vvert{E}) \right)=\]
\[R^q\pi_*\left(f_*\cO_Y(f^*(K_X+E)+K_{Y/X}-\lfloor F/d\rfloor)\right).\]
Let $p\geq 1$: we then have
\[R^pf_*\cO_Y(f^*(K_X+E)+K_{Y/X}-\lfloor F/d\rfloor)=\]
\[\cO_X(K_X+E)\otimes R^pf_*\cO_Y(K_{Y/X}-\lfloor F/d\rfloor)=0,\]
where the first equality is given by the projection formula, and the second one by relative Nadel Vanishing \cite[Variant 9.4.5]{La2}. The Leray spectral sequence then yields
\[R^q\delta_*\cO_Y(f^*(K_X+E)+K_{Y/X}-\lfloor F/d\rfloor)=\]
\[R^q\pi_*\left( f_*\cO_Y(f^*(K_X+E)+K_{Y/X}-\lfloor F/d\rfloor)\right).\]
Let now $L\coloneqq f^*(K_X+E)+K_{Y/X}-\lfloor F/d\rfloor$: since $L\sim_{\bR}K_Y+\lbrace F/d \rbrace+U/d$, $\lbrace F/d \rbrace$ is snc, and $U/d$ is semiample, by Bertini's theorem (\cite[Lemma 4.7]{KolSOP}), we can write $\lbrace F/d \rbrace+U/d\sim_{\bR}B$ such that $(Y,B)$ is klt. Note that $R^q\pi_*\cO_Y(L)$ is a finitely generated module over the principal ideal domain $\cO_C$. Since $L-(K_Y+B)$ is $\pi$-semiample, by \cite[Theorem 6.3.(i)]{fujino2010} every nonzero section of $R^q\pi_*\cO_Y(L)$ contains $C$ in its support. In particular, $R^q\pi_*\cO_Y(L)$ has no torsion sections, thus it is locally free.
\end{proof}

Next is an asymptotic version of the Restriction Theorem for ordinary multiplier ideals.

\begin{proposition}\label{l-asymptotic_restriction}
Let $C$ be a smooth integral complex affine curve, let $\pi\colon X\to C$ be a projective family of smooth varieties, and let $L$ be a Cartier divisor on $X$. Then $\cJ(\vvert{mL}_c)\subset \cJ(\vvert{mL})_{X_c}$ for all $m\geq 0$ and all $c\in C$.
\end{proposition}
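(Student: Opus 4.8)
The plan is to reduce the statement to an application of the usual Restriction Theorem for multiplier ideals on a fixed fiber, applied to a sufficiently divisible and sufficiently ample truncation of the linear system $|m\ell L|$. First I would fix $c\in C$ and $m\geq 0$. Since $X_c$ is a smooth divisor on the smooth variety $X$ and moves in a free linear system (being a fiber of $\pi$), it is in particular a general element of a free system, so the Restriction Theorem of \cite[Corollary 9.5.9, Theorem 9.5.35]{La2} applies to it: for any Cartier divisor $D$ on $X$ one has $\cJ(X_c, \tfrac{1}{k}|kD|_{X_c})\subset \cJ(X,\tfrac1k|kD|)\cdot\cO_{X_c}$, where $|kD|_{X_c}$ denotes the restricted linear system. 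The content of the proposition is to pass from these ``finite level'' statements to the asymptotic multiplier ideals $\cJ(\|mL\|)$.

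The key steps, in order, are: (i) recall that by definition $\cJ(\|mL\|)=\cJ(\tfrac{1}{p}|pmL|)$ for $p$ sufficiently divisible, and that this stabilizes; similarly $\cJ(\|mL\|_c)=\cJ(X_c,\tfrac1p|pmL_c|)$ for $p$ sufficiently divisible. (ii) Observe the comparison of linear systems: the restricted system $|pmL|_{X_c}$ — i.e. the image of $H^0(X,\cO_X(pmL))\to H^0(X_c,\cO_{X_c}(pmL_c))$ — is contained in the complete system $|pmL_c|$ on $X_c$, so $\cJ(X_c,\tfrac1p|pmL_c|)\subset \cJ(X_c,\tfrac1p|pmL|_{X_c})$ (larger linear system gives smaller, i.e.\ contained, multiplier ideal). (iii) Apply the Restriction Theorem on $X$ to $D=pmL$ with the general free divisor $X_c$: $\cJ(X_c,\tfrac1p|pmL|_{X_c})\subset \cJ(X,\tfrac1p|pmL|)\cdot\cO_{X_c}=\cJ(\|mL\|)_{X_c}$, the last equality holding once $p$ is large enough that the asymptotic multiplier ideal is computed at level $pm$. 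Chaining (ii) and (iii) for such $p$ gives $\cJ(\|mL\|_c)\subset\cJ(\|mL\|)_{X_c}$, as desired. One must choose $p$ divisible enough to simultaneously compute the asymptotic multiplier ideals on $X$ and on $X_c$ and apply the subadditivity needed in the restriction theorem; since only finitely many conditions are involved such $p$ exists.

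The main obstacle I anticipate is the bookkeeping around ``for $p$ sufficiently divisible'': one needs the asymptotic multiplier ideal $\cJ(X,\|mL\|)$ to equal $\cJ(X,\tfrac1p|pmL|)$ for the \emph{same} $p$ for which $\cJ(X_c,\|mL\|_c)=\cJ(X_c,\tfrac1p|pmL_c|)$, and for which the Restriction Theorem applies cleanly — but since the stabilization of asymptotic multiplier ideals only requires $p$ to be a multiple of some fixed $p_0$, and there are only finitely many such divisibility constraints, we may pass to a common multiple. A secondary subtlety is making sure $X_c$ is genuinely a ``general'' member in the sense required by the Restriction Theorem; this is immediate because $\pi$ is smooth, so every fiber is smooth and the fibers form a base-point-free family, and the locus where restriction of a given linear system behaves badly is a proper closed subset that we may avoid — or, more simply, one invokes the version of the Restriction Theorem valid for any smooth divisor $H$ together with the observation that $\cJ(\|mL\|_c)$ only depends on $L_c$ and $X_c$. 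I expect the proof to be short once these points are set up correctly.
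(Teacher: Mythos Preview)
Your step (ii) contains a genuine error: the claim that ``larger linear system gives smaller, i.e.\ contained, multiplier ideal'' is backwards. If $V\subset W$ are linear series in $|D|$, then more sections generate a \emph{larger} base ideal, $\mathfrak b(|V|)\subset\mathfrak b(|W|)$, and hence $\cJ(c\cdot|V|)\subset\cJ(c\cdot|W|)$. So the correct inclusion is $\cJ(X_c,\tfrac1p|pmL|_{X_c})\subset\cJ(X_c,\tfrac1p|pmL_c|)$, which points the wrong way for your chain (ii)--(iii).

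The error traces back to a misreading of the notation $\cJ(\|mL\|_c)$. In the paper (and in Lazarsfeld's \S11.2) this is the asymptotic multiplier ideal of the \emph{restricted} graded linear series $V_p=\im\bigl[H^0(X,pmL)\to H^0(X_c,pmL_c)\bigr]$, not of the complete series $|pmL_c|$ on $X_c$. With that definition your step (ii) is simply unnecessary: for $p$ sufficiently divisible one has $\cJ(\|mL\|_c)=\cJ(p^{-1}|V_p|)$ by definition, and the Restriction Theorem applied to a general $D\in|pmL|$ (so that $D_c\in V_p$ is general as well) gives the inclusion into $\cJ(p^{-1}|pmL|)_{X_c}=\cJ(\|mL\|)_{X_c}$ directly. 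This is precisely the paper's argument, which invokes \cite[Proposition 9.2.26]{La2} to pass to a general divisor and then \cite[Theorem 9.5.1]{La2} for the restriction. Once you adopt the correct definition, your outline collapses to the paper's proof; with your interpretation, the statement you would be trying to prove is strictly stronger than what is asserted, and your step (ii) does not establish it.
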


\begin{proof}
The statement is obvious when $m=0$. Fix then $m\geq 1$ and $c\in C$, and consider the graded linear series 
$$\lbrace V_p\coloneqq \im\left[H^0(X,pmL)\to H^0(X_c,pmL_c)\right]\rbrace_{p\in\bN}.$$
Let $p\gg 0$ so that
$$\cJ(\vvert{mL}_c)=\cJ(p^{-1}|V_p|)\hspace{5mm}\cJ(\vvert{mL})=\cJ(p^{-1}|pmL|).$$
Let $D\in |pmL|$ be a sufficiently general divisor so that $D_c\coloneqq D|_{X_c}\in V_p$ is also general.  By \cite[Proposition 9.2.26]{La2} we have
$$\cJ(p^{-1}|V_p|)=\cJ(p^{-1}D_c)\hspace{5mm}\cJ(p^{-1}|pmL|)=\cJ(p^{-1}D),$$
thus we conclude by \cite[Theorem 9.5.1]{La2}.
\end{proof}

\begin{corollary}\label{c-OT}
Let $C$ be a smooth integral complex affine curve, let $\pi\colon X\to C$ be a projective family of smooth varieties, and let $c\in C$ be a closed point. If $\sigma_c\in H^0(X_c,mK_{X_c})$ vanishes along the ideal $\cJ(\vvert{(m-1)K_X})_{X_c}$, then there exists $\sigma\in H^0(X,mK_X)$ such that $\sigma\vert_{X_c}=\sigma_c$.
\end{corollary}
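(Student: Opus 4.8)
The strategy is to combine the asymptotic restriction inclusion (\autoref{l-asymptotic_restriction}) with the local freeness of the twisted direct image sheaf (\autoref{t-lfness}), so that surjectivity of the restriction map on global sections is forced by a rank count over the PID $\cO_C$. First I would set $E:=(m-1)K_X$, so that $K_X+E=mK_X$, and consider the sheaf $\cF:=\pi_*\bigl(\cO_X(mK_X)\otimes\cJ(\|(m-1)K_X\|)\bigr)$. By \autoref{t-lfness} (applied with $q=0$) this $\cF$ is locally free on $C$, hence free since $C$ is affine and $\cO_C$ is a PID. The key point is then that the natural restriction map
\[
\cF\otimes\kappa(c)\longrightarrow H^0\bigl(X_c,\cO_{X_c}(mK_{X_c})\otimes\cJ(\|(m-1)K_X\|)_{X_c}\bigr)
\]
is an isomorphism: this is a base-change statement for the locally free sheaf $\cF$, using that $\cJ$ restricts well and that $R^1$ of the twisted sheaf is also locally free by \autoref{t-lfness} with $q=1$ (the usual cohomology-and-base-change criterion, \cite[Theorem III.12.11]{Har}, together with local freeness of both $R^0$ and $R^1$).

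Next I would use \autoref{l-asymptotic_restriction} with $L=K_X$ and the integer $m-1$ in place of $m$ to obtain the inclusion $\cJ(\|(m-1)K_X\|_c)\subset \cJ(\|(m-1)K_X\|)_{X_c}$. A section $\sigma_c\in H^0(X_c,mK_{X_c})$ that vanishes along $\cJ(\|(m-1)K_X\|)_{X_c}$ therefore lies in $H^0\bigl(X_c,\cO_{X_c}(mK_{X_c})\otimes\cJ(\|(m-1)K_X\|)_{X_c}\bigr)$. Combining with the previous paragraph, $\sigma_c$ is in the image of $\cF\otimes\kappa(c)$, i.e. there is a section $\sigma'\in\cF=H^0\bigl(X,\cO_X(mK_X)\otimes\cJ(\|(m-1)K_X\|)\bigr)\subset H^0(X,mK_X)$ with $\sigma'|_{X_c}=\sigma_c$. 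Setting $\sigma:=\sigma'$ finishes the proof.

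I expect the main obstacle to be the base-change isomorphism $\cF\otimes\kappa(c)\cong H^0(X_c,(mK_{X_c})\otimes\cJ(\cdots)_{X_c})$: one has to be careful that (i) $\cF\otimes\kappa(c)$ maps to cohomology of the \emph{restricted} multiplier ideal $\cJ(\|(m-1)K_X\|)_{X_c}$ and not of something larger, which follows because forming $\pi_*$ of a subsheaf commutes with the restriction to a fiber up to the inclusion supplied precisely by \autoref{l-asymptotic_restriction}; and (ii) surjectivity of this map, which uses Grauert-type base change and thus requires the local freeness of $R^1\pi_*$ of the twisted sheaf from \autoref{t-lfness}. Everything else — the reduction $E=(m-1)K_X$, the identification $K_X+E=mK_X$, and the final extension — is formal. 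One minor point to check is that the resolution and snc data in \autoref{t-lfness} are compatible with the linear-series data used in \autoref{l-asymptotic_restriction}, but since both are stated for arbitrary Cartier divisors on a fixed smooth family $\pi\colon X\to C$, no compatibility beyond taking $E=(m-1)K_X$ and $L=K_X$ is needed.
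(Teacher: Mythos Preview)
Your core argument---local freeness of all $R^q\pi_*\bigl(\cO_X(mK_X)\otimes\cJ(\|(m-1)K_X\|)\bigr)$ from \autoref{t-lfness}, followed by cohomology and base change via \cite[Theorem III.12.11]{Har}---is exactly the paper's proof, which consists of those two sentences and nothing more.

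However, your invocation of \autoref{l-asymptotic_restriction} is superfluous and the justification you give for it is confused. The hypothesis of the corollary already states that $\sigma_c$ vanishes along the \emph{restricted} ideal $\cJ(\|(m-1)K_X\|)_{X_c}$; there is no need to compare this with the ideal $\cJ(\|(m-1)K_X\|_c)$ of the restricted linear series. Indeed, the sentence you wrote after citing \autoref{l-asymptotic_restriction} (``A section $\sigma_c$ that vanishes along $\cJ(\|(m-1)K_X\|)_{X_c}$ therefore lies in $H^0(X_c,\cO_{X_c}(mK_{X_c})\otimes\cJ(\|(m-1)K_X\|)_{X_c})$'') is a tautology that does not use the inclusion you just derived. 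In the paper, \autoref{l-asymptotic_restriction} appears only later, in the proof of \autoref{t-AIP_iff_IP}, where it is used to \emph{verify} the hypothesis of \autoref{c-OT}---not in the proof of \autoref{c-OT} itself. Your point (i) about base change likewise does not involve \autoref{l-asymptotic_restriction}: what base change produces is $H^0(X_c,\cG\otimes_{\cO_X}\cO_{X_c})$ for $\cG=\cO_X(mK_X)\otimes\cJ(\|(m-1)K_X\|)$, and comparing this with sections vanishing along the image ideal $\cJ(\|(m-1)K_X\|)_{X_c}$ is a question about possible torsion in $\cJ\otimes\cO_{X_c}$, unrelated to the inclusion of \autoref{l-asymptotic_restriction}. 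Once you drop that paragraph, your argument collapses to the paper's.
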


\begin{proof}
The sheaves $R^q\pi_*\left(\cJ(\vvert{(m-1)K_X})\otimes\cO_X(mK_X)\right)$ are locally free for all $q\geq 0$ by \autoref{t-lfness}, thus we conclude by \cite[Theorem III.12.11]{Har}.
\end{proof}

\begin{proof}[Proof of \autoref{t-AIP_iff_IP}]
First of all, note that the hypothesis implies that $\kappa\coloneqq \kappa(K_{X_c})$ is independent of $c\in C$. If $h^0(X_c,mK_{X_c})=0$ for all $c\in C$ and all $m\geq 0$ divisible enough, then $\kappa=-\infty$ for all $c\in C$, thus $h^0(X_c,mK_{X_c})=0$ for all $m>0$ and all $c\in C$. Thus we can assume $0\leq \kappa\leq\dim (\pi)$. Note that $h^0(X_c,K_{X_c})$ a Hodge number, hence it is invariant in smooth projective families. Let $m\geq 2$ and let $\sigma_c\in H^0(X_c,mK_{X_c})$: by \autoref{c-OT} it is enough to show that $\sigma_c$ vanishes along $\cJ(\vvert{(m-1)K_X})_{X_c}$.

Let $e\in\bN$ such that $h^0(X_c,ekK_{X_c})$ is independent of $c\in C$ for all $k\in\bN$, and consider the equation
\begin{equation}\label{e-trick}
m(l+1)=ek.
\end{equation}
There are infinitely many positive integer values of $k$ for which \autoref{e-trick} has a solution in $l$. For all such $l$, consider the section $\sigma_c^{l+1}\in H^0(X_c,m(l+1)K_{X_c})$. Since $h^0(X_c,m(l+1)K_{X_c})$ does not depend on $c\in C$, the restriction map
$$H^0(X,m(l+1)K_X)\to H^0(X_c,m(l+1)K_{X_c})$$
is surjective, thus $\sigma^{l+1}_c$ vanishes along $\cJ(\vvert{m(l+1)K_X}_c)$. We have inclusions
\[
\begin{split}
\cJ(\vvert{m(l+1)K_X}_c)&\subset\cJ(\vvert{mK_X}_c)^{l+1}\\
							&\subset\cJ(\vvert{mK_X}_c)^l\\
							&\subset\cJ(\vvert{(m-1)K_X}_c)^l
\end{split}
\]
where the first one comes from \cite[Corollary 11.2.4]{La2} and the third one from \cite[Theorem 11.1.19]{La2}. Hence $\sigma_c^{l+1}$ vanishes along $\cJ(\vvert{(m-1)K_X}_c)^l$ for infinitely many $l$. By \cite[Example 11.5.6]{La2} $\sigma_c$ vanishes along the integral closure of $\cJ(\vvert{(m-1)K_X}_c)$, which coincides with $\cJ(\vvert{(m-1)K_X}_c)$, since multiplier ideals are integrally closed by \cite[Corollary 9.6.13]{La2}. By \autoref{l-asymptotic_restriction} $\cJ(\vvert{(m-1)K_X}_c)\subset \cJ(\vvert{(m-1)K_X})_{X_c}$, thus we conclude.
\end{proof}

\section{An example}\label{s-example}

Throughout this section $S$ will denote the spectrum of a DVR of residue characteristic $p>2$, with closed and generic point $s$ and $\eta$, respectively.

\begin{theorem}\label{t-AIP_fails_bfree}
    There are projective families of smooth 3-folds of Kodaira dimension one $\pi\colon X\to S$, such that $K_X$ is semiample and asymptotic invariance of plurigenera fails.
\end{theorem}

\begin{remark}\label{r-failureAIP_Pairs}
    This kind of strong failure of invariance of plurigenera has been first observed in \cite{Bri20} and \cite{Kol3foldcharp} for families of good minimal models of terminal surface pairs of Kodaira dimension one and  of plt 3-folds of general type, respectively. However, all the previously known examples needed a non-empty boundary $\Delta\neq 0$. \autoref{t-AIP_fails_bfree} is new in this sense. 
\end{remark}

\begin{proof}
    We follow closely the construction in \cite{Bri20}. First, consider a family of semistable elliptic K3 surfaces, i.e. a commutative diagram
    \begin{equation*}
        \begin{tikzcd}
            W\arrow[dr,"\psi",swap]\arrow[rr,"\varphi"] &         & \bP^1_S\arrow[dl]\\
              & S, &
        \end{tikzcd}
    \end{equation*}
    where $\psi$ is a smooth projective morphism whose fibers are K3 surfaces, for all $t\in S$ the morphism $\varphi_t$ is a contraction with genus one fibers, and its only singular fibers are of type $I_n$. We will denote by $P_1,\dots,P_l\in W(S)$ the singular points of the morphism $\varphi$, and assume that they are pairwise disjoint. We will also assume that $\varphi$ is smooth over $0,\infty\in \bP^1_S(S)$. Such a family can be easily constructed by ``spreading out'' one of the examples in \cite{MP} over a sufficiently large mixed characteristic base, and then reducing it modulo a sufficiently large prime.

    Let $E\to S$ be a smooth projective family of genus one curves, admitting a non-trivial $q=p^e$-torsion line bundle $M$, such that $M_s\cong \cO_{E_s}$. Consider the product $Z\coloneqq W\times_S E$ with the line bundle $L\coloneqq \varphi^*\cO_{\bP^1_S}(1)\boxtimes M$. Let $[U:V]$ be homogeneous coordinates on $\bP^1_S$ and let $\sigma\coloneqq \varphi^*(V^{q-1}U)\boxtimes\mathbf{1}\in H^0(Z,L^q)$, where $\mathbf{1}\in H^0(E,M^{q})$ is a nowhere vanishing section. 
    Consider now the following chain of morphisms
    \[
    Y\coloneqq (Z[\sigma^{1/q}])^{\nu}\xrightarrow{\gamma} Z\to W\to \bP^1_S,
    \]
    where $\gamma$ is the normalized $q$-to-1 cyclic cover branched along $(\sigma=0)$. Over the affine local charts $(V\neq 0)$ and $(U\neq 0)$ in $\bP^1_S$, the above composition takes the form
    \begin{equation}\label{e-local_coord}
        \begin{split}
            \frac{\cO_{Z}[\lambda]}{(\lambda^q-\alpha\cdot \varphi^*(u))}\leftarrow \cO_Z\to \cO_W\xleftarrow{\varphi^*} \cO_S[u],\\
            \frac{\cO_{Z}\left[\varphi^*(v)/\xi,\xi \right]}{(\xi-\beta\cdot (\varphi^*(v)/\xi)^{q-1})}\leftarrow \cO_Z\leftarrow \cO_W\xleftarrow{\varphi^*} \cO_S[v],
        \end{split}
    \end{equation}
    respectively. Here $\alpha$ and $\beta$ are sections of $\cO_E^\times$ corresponding to local trivializations of the section $\mathbf{1}\in H^0(E,M^q)$. In particular, $\alpha_s,\beta_s\in(\cO_{E_s}^\times)^q$. Hence we see that $Y$ is smooth over $S$ away from $\gamma^{-1}(\lbrace P_j\rbrace_{j=1}^l\times_S E)$. Formally-locally around $\lbrace P_j\rbrace \times_S E$, the above composition becomes
    \[
    \frac{\cO_E\llbracket x,y\rrbracket[\lambda]}{(\lambda^q-\alpha\cdot xy)}\leftarrow \frac{\cO_E\llbracket x,y,u\rrbracket}{(u-xy)}\leftarrow \frac{\cO_S\llbracket x,y,u\rrbracket}{(u-xy)}\leftarrow \cO_S\llbracket u\rrbracket.
    \]
    In particular, as $p>2$, the singularities are \'etale-locally isomorphic to $A_{q-1}\times E$, where $A_{q-1}$ denotes the DuVal singularity $(\lambda^q-xy=0)$. By \cite{Artin}, after possibly replacing $S$ by a finite extension, we can find a simultaneous crepant resolution $\widetilde{A}_{q-1}\to A_{q-1}$ over $S$. As blowups can be constructed \'etale-locally (\cite[Tag 085S]{SP}) we have an induced simultaneous crepant resolution $r\colon X\to Y$ over $S$. 
    As $K_Z\sim 0$ we have that $K_Y\sim_{\bQ}\gamma^*((q-1)L)$ is semiample. As $r_*\cO_X=\cO_Y$ and $r$ is crepant, we have that $K_X$ is semiample as well. In particular, its Iitaka fibration is the induced morphism
    \[
    f\colon X\xrightarrow{r} Y \xrightarrow{\gamma} Z\to \bP_S^1.
    \]
    Thus, by \autoref{l-stein_invariance+}, asymptotic invariance of plurigenera holds if and only if the restriction $f_s$ is a contraction. If that were the case, then the geometric generic fiber of $f_s$ ought to be reduced (see \cite[Theorem 2]{Maclane}). But from \autoref{e-local_coord} we see that this is not the case.
\end{proof}
\begin{remark}
    A simple computation using the projection formula shows that $h^0(X_s,mK_{X_s})\approx p^e\cdot h^0(X_\eta,mK_{X_\eta})$ for all sufficiently divisible $m\geq 1$. In particular, for all primes $p>2$, we can obtain projective families of smooth good minimal models with arbitrarily big jumps in plurigenera.
\end{remark}

\bibliographystyle{alpha}
\bibliography{bib.bib}

@article {ABL,
    AUTHOR = {Arvidsson, Emelie and Bernasconi, Fabio and Lacini, Justin},
     TITLE = {On the {K}awamata-{V}iehweg vanishing theorem for log del
              {P}ezzo surfaces in positive characteristic},
   JOURNAL = {Compos. Math.},
  FJOURNAL = {Compositio Mathematica},
    VOLUME = {158},
      YEAR = {2022},
    NUMBER = {4},
     PAGES = {750--763},
      ISSN = {0010-437X},
   MRCLASS = {14E30 (14F17 14G17 14J17)},
  MRNUMBER = {4438290},
       DOI = {10.1112/S0010437X22007394},
       URL = {https://doi.org/10.1112/S0010437X22007394},
}

@article{Artin,
  title={Algebraic construction of {B}rieskorn's resolutions},
  author={Artin, Michael},
  journal={Journal of Algebra},
  volume={29},
  number={2},
  pages={330--348},
  year={1974},
  publisher={Elsevier}
}

@article{BBS,
  title={Abundance theorem for threefolds in mixed characteristic},
  author={Bernasconi, Fabio and Brivio, Iacopo and Stigant, Liam},
  journal={Mathematische Annalen},
  pages={1--26},
  year={2022},
  publisher={Springer}
}

@article {BCHM,
    AUTHOR = {Birkar, C. and Cascini, P. and Hacon, C. D.
              and McKernan, J.},
     TITLE = {Existence of minimal models for varieties of log general type},
   JOURNAL = {J. Amer. Math. Soc.},
  FJOURNAL = {Journal of the American Mathematical Society},
    VOLUME = {23},
      YEAR = {2010},
    NUMBER = {2},
     PAGES = {405--468},
}

@article{Birlcfib,
  title={Log {C}alabi-{Y}au fibrations},
  author={Birkar, Caucher},
  journal={arXiv preprint arXiv:1811.10709},
  year={2018}
}

@article{BMPSTWW,
  title={Globally-regular varieties and the minimal model program for threefolds in mixed characteristic},
  author={Bhatt, Bhargav and Ma, Linquan and Patakfalvi, Zsolt and Schwede, Karl and Tucker, Kevin and Waldron, Joe and Witaszek, Jakub},
  journal={Publications math{\'e}matiques de l'IH{\'E}S},
  volume={138},
  number={1},
  pages={69--227},
  year={2023},
  publisher={Springer}
}

@article {Bri20,
    AUTHOR = {Brivio, Iacopo},
     TITLE = {Invariance of plurigenera fails in positive and mixed
              characteristic},
   JOURNAL = {Int. Math. Res. Not. IMRN},
  FJOURNAL = {International Mathematics Research Notices. IMRN},
      YEAR = {2023},
    NUMBER = {24},
     PAGES = {21215--21225},
      ISSN = {1073-7928,1687-0247},
   MRCLASS = {14J27 (14E30 14G17 14G45)},
  MRNUMBER = {4682759},
       DOI = {10.1093/imrn/rnac304},
       URL = {https://doi.org/10.1093/imrn/rnac304},
}

@article{Bri_NEMMP,
  title={Non extendable {MMP}s},
  author={Brivio, Iacopo},
  journal={In preparation},
  year={2025}
}

@article{CL,
  title={A numerical criterion for simultaneous normalization},
  author={Chiang-Hsieh, Hung-Jen and Lipman, Joseph},
  journal={Duke Mathematical Journal},
  volume={133},
  number={2},
  pages={347--390},
  year={2006},
  publisher={Duke University Press}
}

@article {CP19,
    AUTHOR = {Cossart, Vincent and Piltant, Olivier},
     TITLE = {Resolution of singularities of arithmetical threefolds},
   JOURNAL = {J. Algebra},
  FJOURNAL = {Journal of Algebra},
    VOLUME = {529},
      YEAR = {2019},
     PAGES = {268--535},
}

@article {CT,
    AUTHOR = {Cascini, Paolo and Tanaka, Hiromu},
     TITLE = {Relative semi-ampleness in positive characteristic},
   JOURNAL = {Proc. Lond. Math. Soc. (3)},
  FJOURNAL = {Proceedings of the London Mathematical Society. Third Series},
    VOLUME = {121},
      YEAR = {2020},
    NUMBER = {3},
     PAGES = {617--655},
      ISSN = {0024-6115,1460-244X},
   MRCLASS = {14C20 (14G17)},
  MRNUMBER = {4100119},
MRREVIEWER = {Andreas\ H\"oring},
       DOI = {10.1112/plms.12323},
       URL = {https://doi.org/10.1112/plms.12323},
}

@article {DW_abu,
    AUTHOR = {Das, Omprokash and Waldron, Joe},
     TITLE = {On the abundance problem for 3-folds in characteristic
              {$p>5$}},
   JOURNAL = {Math. Z.},
  FJOURNAL = {Mathematische Zeitschrift},
    VOLUME = {292},
      YEAR = {2019},
    NUMBER = {3-4},
     PAGES = {937--946},
      ISSN = {0025-5874,1432-1823},
   MRCLASS = {14E30 (14G17)},
  MRNUMBER = {3980278},
MRREVIEWER = {Sung\ Rak\ Choi},
       DOI = {10.1007/s00209-018-2110-5},
       URL = {https://doi.org/10.1007/s00209-018-2110-5},
}

@article {DW,
    AUTHOR = {Das, Omprokash and Waldron, Joe},
     TITLE = {On the log minimal model program for threefolds over imperfect
              fields of characteristic {$p>5$}},
   JOURNAL = {J. Lond. Math. Soc. (2)},
  FJOURNAL = {Journal of the London Mathematical Society. Second Series},
    VOLUME = {106},
      YEAR = {2022},
    NUMBER = {4},
     PAGES = {3895--3937},
      ISSN = {0024-6107,1469-7750},
   MRCLASS = {14E30 (14G17 14J30)},
  MRNUMBER = {4524214},
MRREVIEWER = {Guolei\ Zhong},
       DOI = {10.1112/jlms.12677},
       URL = {https://doi.org/10.1112/jlms.12677},
}

@article {Ejiri_Alb,
    AUTHOR = {Ejiri, Sho},
     TITLE = {When is the {A}lbanese morphism an algebraic fiber space in
              positive characteristic?},
   JOURNAL = {Manuscripta Math.},
  FJOURNAL = {Manuscripta Mathematica},
    VOLUME = {160},
      YEAR = {2019},
    NUMBER = {1-2},
     PAGES = {239--264},
      ISSN = {0025-2611,1432-1785},
   MRCLASS = {14D06 (14G17)},
  MRNUMBER = {3983395},
MRREVIEWER = {Chen\ Jiang},
       DOI = {10.1007/s00229-018-1056-6},
       URL = {https://doi.org/10.1007/s00229-018-1056-6},
}

@article {fujino2010,
    AUTHOR = {Fujino, Osamu},
     TITLE = {Fundamental theorems for the log minimal model program},
   JOURNAL = {Publ. Res. Inst. Math. Sci.},
  FJOURNAL = {Publications of the Research Institute for Mathematical
              Sciences},
    VOLUME = {47},
      YEAR = {2011},
    NUMBER = {3},
     PAGES = {727--789},
      ISSN = {0034-5318},
   MRCLASS = {14E30 (14C30 14F17)},
  MRNUMBER = {2832805},
MRREVIEWER = {Vladimir Lazi\'{c}},
       DOI = {10.2977/PRIMS/50},
       URL = {https://doi.org/10.2977/PRIMS/50},
}

@book {EGA1,
    AUTHOR = {Grothendieck, A. and Dieudonn\'{e}, J. A.},
     TITLE = {\'{E}l\'{e}ments de g\'{e}om\'{e}trie alg\'{e}brique. {I}},
    SERIES = {Grundlehren der mathematischen Wissenschaften [Fundamental
              Principles of Mathematical Sciences]},
    VOLUME = {166},
 PUBLISHER = {Springer-Verlag, Berlin},
      YEAR = {1971},
     PAGES = {ix+466},
      ISBN = {3-540-05113-9; 0-387-05113-9},
   MRCLASS = {14-02 (14A15 14F05)},
  MRNUMBER = {3075000},
}

@article {EGAIV_2,
    AUTHOR = {Grothendieck, A.},
     TITLE = {\'{E}l\'ements de g\'eom\'etrie alg\'ebrique. {IV}. \'{E}tude
              locale des sch\'emas et des morphismes de sch\'emas. {II}},
   JOURNAL = {Inst. Hautes \'Etudes Sci. Publ. Math.},
  FJOURNAL = {Institut des Hautes \'Etudes Scientifiques. Publications
              Math\'ematiques},
    NUMBER = {24},
      YEAR = {1965},
     PAGES = {231},
      ISSN = {0073-8301,1618-1913},
   MRCLASS = {14.00},
  MRNUMBER = {199181},
MRREVIEWER = {H.\ Hironaka},
       URL = {http://www.numdam.org/item?id=PMIHES_1965__24__231_0},
}

@article {EGAIV_3,
    AUTHOR = {Grothendieck, A.},
     TITLE = {\'{E}l\'{e}ments de g\'{e}om\'{e}trie alg\'{e}brique. {IV}. \'{E}tude locale des
              sch\'{e}mas et des morphismes de sch\'{e}mas. {III}},
   JOURNAL = {Inst. Hautes \'{E}tudes Sci. Publ. Math.},
  FJOURNAL = {Institut des Hautes \'{E}tudes Scientifiques. Publications
              Math\'{e}matiques},
    NUMBER = {28},
      YEAR = {1966},
     PAGES = {255},
      ISSN = {0073-8301},
   MRCLASS = {14.55},
  MRNUMBER = {217086},
MRREVIEWER = {J. P. Murre},
       URL = {http://www.numdam.org/item?id=PMIHES_1966__28__255_0},
}

@article {EGAIV_4,
    AUTHOR = {Grothendieck, A.},
     TITLE = {\'{E}l\'{e}ments de g\'{e}om\'{e}trie alg\'{e}brique. {IV}. \'Etude
              locale des sch\'emas et des morphismes de sch\'emas {IV}},
   JOURNAL = {Inst. Hautes \'Etudes Sci. Publ. Math.},
  FJOURNAL = {Institut des Hautes \'Etudes Scientifiques. Publications
              Math\'ematiques},
    NUMBER = {32},
      YEAR = {1967},
     PAGES = {361},
      ISSN = {0073-8301,1618-1913},
   MRCLASS = {14.55},
  MRNUMBER = {238860},
MRREVIEWER = {J.\ P.\ Murre},
       URL = {http://www.numdam.org/item?id=PMIHES_1967__32__361_0},
}

@book {Har,
    AUTHOR = {Hartshorne, Robin},
     TITLE = {Algebraic geometry},
      NOTE = {Graduate Texts in Mathematics, No. 52},
 PUBLISHER = {Springer-Verlag, New York-Heidelberg},
      YEAR = {1977},
     PAGES = {xvi+496},
      ISBN = {0-387-90244-9},
   MRCLASS = {14-01},
  MRNUMBER = {0463157},
MRREVIEWER = {Robert Speiser},
}

@article {HMX,
    AUTHOR = {Hacon, Christopher D. and McKernan, James and Xu, Chenyang},
     TITLE = {Boundedness of moduli of varieties of general type},
   JOURNAL = {J. Eur. Math. Soc. (JEMS)},
  FJOURNAL = {Journal of the European Mathematical Society (JEMS)},
    VOLUME = {20},
      YEAR = {2018},
    NUMBER = {4},
     PAGES = {865--901},
      ISSN = {1435-9855,1435-9863},
   MRCLASS = {14J10 (14E30)},
  MRNUMBER = {3779687},
MRREVIEWER = {Paul\ A.\ Hacking},
       DOI = {10.4171/JEMS/778},
       URL = {https://doi.org/10.4171/JEMS/778},
}

@incollection {HMX_surv,
    AUTHOR = {Hacon, Christopher D. and McKernan, James and Xu, Chenyang},
     TITLE = {Boundedness of varieties of log general type},
 BOOKTITLE = {Algebraic geometry: {S}alt {L}ake {C}ity 2015},
    SERIES = {Proc. Sympos. Pure Math.},
    VOLUME = {97.1},
     PAGES = {309--348},
 PUBLISHER = {Amer. Math. Soc., Providence, RI},
      YEAR = {2018},
      ISBN = {978-1-4704-3577-6},
   MRCLASS = {14E30},
  MRNUMBER = {3821154},
MRREVIEWER = {Carla\ Novelli},
       DOI = {10.1090/pspum/097.1/01677},
       URL = {https://doi.org/10.1090/pspum/097.1/01677},
}

@article {HW,
    AUTHOR = {Hacon, Christopher and Witaszek, Jakub},
     TITLE = {On the relative minimal model program for fourfolds in
              positive and mixed characteristic},
   JOURNAL = {Forum Math. Pi},
  FJOURNAL = {Forum of Mathematics. Pi},
    VOLUME = {11},
      YEAR = {2023},
     PAGES = {Paper No. e10, 35},
      ISSN = {2050-5086},
   MRCLASS = {14E30 (14G17)},
  MRNUMBER = {4565409},
MRREVIEWER = {Artie\ Prendergast-Smith},
       DOI = {10.1017/fmp.2023.6},
       URL = {https://doi.org/10.1017/fmp.2023.6},
}

@article {HXp,
    AUTHOR = {Hacon, Christopher D. and Xu, Chenyang},
     TITLE = {On the three dimensional minimal model program in positive
              characteristic},
   JOURNAL = {J. Amer. Math. Soc.},
  FJOURNAL = {Journal of the American Mathematical Society},
    VOLUME = {28},
      YEAR = {2015},
    NUMBER = {3},
     PAGES = {711--744},
      ISSN = {0894-0347},
   MRCLASS = {14E30 (13A35)},
  MRNUMBER = {3327534},
MRREVIEWER = {Mihnea Popa},
       DOI = {10.1090/S0894-0347-2014-00809-2},
       URL = {https://doi.org/10.1090/S0894-0347-2014-00809-2},
}

@article {Kaw_Fiberspaces,
    AUTHOR = {Kawamata, Yujiro},
     TITLE = {Minimal models and the {K}odaira dimension of algebraic fiber
              spaces},
   JOURNAL = {J. Reine Angew. Math.},
  FJOURNAL = {Journal f\"ur die Reine und Angewandte Mathematik. [Crelle's
              Journal]},
    VOLUME = {363},
      YEAR = {1985},
     PAGES = {1--46},
      ISSN = {0075-4102,1435-5345},
   MRCLASS = {14E30 (14J10 14J15)},
  MRNUMBER = {814013},
MRREVIEWER = {Eckart\ Viehweg},
       DOI = {10.1515/crll.1985.363.1},
       URL = {https://doi.org/10.1515/crll.1985.363.1},
}

@incollection {Kaw_OEPPF,
    AUTHOR = {Kawamata, Yujiro},
     TITLE = {On the extension problem of pluricanonical forms},
 BOOKTITLE = {Algebraic geometry: {H}irzebruch 70 ({W}arsaw, 1998)},
    SERIES = {Contemp. Math.},
    VOLUME = {241},
     PAGES = {193--207},
 PUBLISHER = {Amer. Math. Soc., Providence, RI},
      YEAR = {1999},
      ISBN = {0-8218-1149-5},
   MRCLASS = {14J10 (14E30)},
  MRNUMBER = {1718145},
MRREVIEWER = {S\'andor\ J.\ Kov\'acs},
       DOI = {10.1090/conm/241/03636},
       URL = {https://doi.org/10.1090/conm/241/03636},
}

@article {Kaw_Plurican,
    AUTHOR = {Kawamata, Y.},
     TITLE = {Pluricanonical systems on minimal algebraic varieties},
   JOURNAL = {Invent. Math.},
  FJOURNAL = {Inventiones Mathematicae},
    VOLUME = {79},
      YEAR = {1985},
    NUMBER = {3},
     PAGES = {567--588},
      ISSN = {0020-9910,1432-1297},
   MRCLASS = {14C20 (14J10)},
  MRNUMBER = {782236},
MRREVIEWER = {Rick\ Miranda},
       DOI = {10.1007/BF01388524},
       URL = {https://doi.org/10.1007/BF01388524},
}

@article {Keel,
    AUTHOR = {Keel, Se\'{a}n},
     TITLE = {Basepoint freeness for nef and big line bundles in positive
              characteristic},
   JOURNAL = {Ann. of Math. (2)},
  FJOURNAL = {Annals of Mathematics. Second Series},
    VOLUME = {149},
      YEAR = {1999},
    NUMBER = {1},
     PAGES = {253--286},
      ISSN = {0003-486X},
   MRCLASS = {14C20 (14E30)},
  MRNUMBER = {1680559},
MRREVIEWER = {Yuri G. Prokhorov},
       DOI = {10.2307/121025},
       URL = {https://doi.org/10.2307/121025},
}

@book {KM,
    AUTHOR = {Koll\'{a}r, J. and Mori, S.},
     TITLE = {Birational geometry of algebraic varieties},
    SERIES = {Cambridge Tracts in Mathematics},
    VOLUME = {134},
      NOTE = {With the collaboration of C. H. Clemens and A. Corti,
              Translated from the 1998 Japanese original},
 PUBLISHER = {Cambridge University Press, Cambridge},
      YEAR = {1998},
     PAGES = {viii+254},
}

@article {KMM,
    AUTHOR = {Keel, Sean and Matsuki, Kenji and McKernan, James},
     TITLE = {Log abundance theorem for threefolds},
   JOURNAL = {Duke Math. J.},
  FJOURNAL = {Duke Mathematical Journal},
    VOLUME = {75},
      YEAR = {1994},
    NUMBER = {1},
     PAGES = {99--119},
      ISSN = {0012-7094,1547-7398},
   MRCLASS = {14E30 (14J30 14J35)},
  MRNUMBER = {1284817},
MRREVIEWER = {Mark\ Gross},
       DOI = {10.1215/S0012-7094-94-07504-2},
       URL = {https://doi.org/10.1215/S0012-7094-94-07504-2},
}

@article {KolSN,
    AUTHOR = {Koll\'ar, J\'anos},
     TITLE = {Simultaneous normalization and algebra husks},
   JOURNAL = {Asian J. Math.},
  FJOURNAL = {Asian Journal of Mathematics},
    VOLUME = {15},
      YEAR = {2011},
    NUMBER = {3},
     PAGES = {437--449},
      ISSN = {1093-6106,1945-0036},
   MRCLASS = {14B25 (14D06 14D15 14D22)},
  MRNUMBER = {2838215},
MRREVIEWER = {Liam\ O'Carroll},
       DOI = {10.4310/AJM.2011.v15.n3.a6},
       URL = {https://doi.org/10.4310/AJM.2011.v15.n3.a6},
}

@book {KolRCAV,
    AUTHOR = {Koll\'ar, J\'anos},
     TITLE = {Rational curves on algebraic varieties},
    SERIES = {Ergebnisse der Mathematik und ihrer Grenzgebiete. 3. Folge. A
              Series of Modern Surveys in Mathematics [Results in
              Mathematics and Related Areas. 3rd Series. A Series of Modern
              Surveys in Mathematics]},
    VOLUME = {32},
 PUBLISHER = {Springer-Verlag, Berlin},
      YEAR = {1996},
     PAGES = {viii+320},
      ISBN = {3-540-60168-6},
   MRCLASS = {14-02 (14C05 14E05 14F17 14J45)},
  MRNUMBER = {1440180},
MRREVIEWER = {Yuri\ G.\ Prokhorov},
       DOI = {10.1007/978-3-662-03276-3},
       URL = {https://doi.org/10.1007/978-3-662-03276-3},
}

@book{Kol_SMMP,
    AUTHOR = {Koll\'{a}r, J\'{a}nos},
     TITLE = {Singularities of the minimal model program},
    SERIES = {Cambridge Tracts in Mathematics},
    VOLUME = {200},
      NOTE = {With a collaboration of S\'{a}ndor Kov\'{a}cs},
 PUBLISHER = {Cambridge University Press, Cambridge},
      YEAR = {2013},
     PAGES = {x+370},
      ISBN = {978-1-107-03534-8},
   MRCLASS = {14E30 (14B05)},
  MRNUMBER = {3057950},
MRREVIEWER = {Tommaso De Fernex},
       DOI = {10.1017/CBO9781139547895},
       URL = {https://doi.org/10.1017/CBO9781139547895},
}

@book {Kol_FVGT,
    AUTHOR = {Koll\'ar, J\'anos},
     TITLE = {Families of varieties of general type},
    SERIES = {Cambridge Tracts in Mathematics},
    VOLUME = {231},
      NOTE = {With the collaboration of Klaus Altmann and S\'andor J.
              Kov\'acs},
 PUBLISHER = {Cambridge University Press, Cambridge},
      YEAR = {2023},
     PAGES = {xviii+471},
      ISBN = {978-1-009-34610-8},
   MRCLASS = {14J10 (14D20 14E30 14J29)},
  MRNUMBER = {4566297},
MRREVIEWER = {Chenyang\ Xu},
}

@book {FA,
     TITLE = {Flips and abundance for algebraic threefolds},
      NOTE = {Papers from the Second Summer Seminar on Algebraic Geometry
              held at the University of Utah, Salt Lake City, Utah, August
              1991,
              Ast\'erisque No. 211 (1992)},
    EDITOR = {Koll\'ar, J\'anos'},
 PUBLISHER = {Soci\'et\'e{} Math\'ematique de France, Paris},
      YEAR = {1992},
     PAGES = {1--258},
      ISSN = {0303-1179,2492-5926},
   MRCLASS = {14E30 (14E35 14M10)},
  MRNUMBER = {1225842},
MRREVIEWER = {Mark\ Gross},
}

@article {Kol_QuotSpaces,
    AUTHOR = {Koll\'ar, J\'anos},
     TITLE = {Quotient spaces modulo algebraic groups},
   JOURNAL = {Ann. of Math. (2)},
  FJOURNAL = {Annals of Mathematics. Second Series},
    VOLUME = {145},
      YEAR = {1997},
    NUMBER = {1},
     PAGES = {33--79},
      ISSN = {0003-486X,1939-8980},
   MRCLASS = {14D25 (14L30)},
  MRNUMBER = {1432036},
MRREVIEWER = {Andrzej\ Bia\l ynicki-Birula},
       DOI = {10.2307/2951823},
       URL = {https://doi.org/10.2307/2951823},
}

@incollection {KolSOP,
    AUTHOR = {Koll\'{a}r, J\'{a}nos},
     TITLE = {Singularities of pairs},
 BOOKTITLE = {Algebraic geometry---{S}anta {C}ruz 1995},
    SERIES = {Proc. Sympos. Pure Math.},
    VOLUME = {62},
     PAGES = {221--287},
 PUBLISHER = {Amer. Math. Soc., Providence, RI},
      YEAR = {1997},
   MRCLASS = {14E30 (14E15 32S40)},
  MRNUMBER = {1492525},
MRREVIEWER = {Alessio Corti},
}

@article {KU,
    AUTHOR = {Katsura, Toshiyuki and Ueno, Kenji},
     TITLE = {On elliptic surfaces in characteristic {$p$}},
   JOURNAL = {Math. Ann.},
  FJOURNAL = {Mathematische Annalen},
    VOLUME = {272},
      YEAR = {1985},
    NUMBER = {3},
     PAGES = {291--330},
      ISSN = {0025-5831},
   MRCLASS = {14J27 (14J10)},
  MRNUMBER = {799664},
MRREVIEWER = {Autorreferat},
       DOI = {10.1007/BF01455561},
       URL = {https://doi.org/10.1007/BF01455561},
}

@article {Lang,
    AUTHOR = {Lang, William E.},
     TITLE = {On {E}nriques surfaces in characteristic {$p$}. {I}},
   JOURNAL = {Math. Ann.},
  FJOURNAL = {Mathematische Annalen},
    VOLUME = {265},
      YEAR = {1983},
    NUMBER = {1},
     PAGES = {45--65},
      ISSN = {0025-5831},
   MRCLASS = {14J28 (14J10)},
  MRNUMBER = {719350},
MRREVIEWER = {Toshiyuki Katsura},
       DOI = {10.1007/BF01456935},
       URL = {https://doi.org/10.1007/BF01456935},
}

@book {La1,
    AUTHOR = {Lazarsfeld, Robert},
     TITLE = {Positivity in algebraic geometry. {I}},
    SERIES = {Ergebnisse der Mathematik und ihrer Grenzgebiete. 3. Folge. A
              Series of Modern Surveys in Mathematics [Results in
              Mathematics and Related Areas. 3rd Series. A Series of Modern
              Surveys in Mathematics]},
    VOLUME = {48},
      NOTE = {Classical setting: line bundles and linear series},
 PUBLISHER = {Springer-Verlag, Berlin},
      YEAR = {2004},
     PAGES = {xviii+387},
      ISBN = {3-540-22533-1},
   MRCLASS = {14-02 (14C20)},
  MRNUMBER = {2095471},
MRREVIEWER = {Mihnea Popa},
       DOI = {10.1007/978-3-642-18808-4},
       URL = {https://doi.org/10.1007/978-3-642-18808-4},
}

@book {La2,
    AUTHOR = {Lazarsfeld, Robert},
     TITLE = {Positivity in algebraic geometry. {II}},
    SERIES = {Ergebnisse der Mathematik und ihrer Grenzgebiete. 3. Folge. A
              Series of Modern Surveys in Mathematics [Results in
              Mathematics and Related Areas. 3rd Series. A Series of Modern
              Surveys in Mathematics]},
    VOLUME = {49},
      NOTE = {Positivity for vector bundles, and multiplier ideals},
 PUBLISHER = {Springer-Verlag, Berlin},
      YEAR = {2004},
     PAGES = {xviii+385},
      ISBN = {3-540-22534-X},
   MRCLASS = {14-02 (14C20 14F05 14F17)},
  MRNUMBER = {2095472},
MRREVIEWER = {Mihnea Popa},
       DOI = {10.1007/978-3-642-18808-4},
       URL = {https://doi.org/10.1007/978-3-642-18808-4},
}

@article {Miy_Abu,
    AUTHOR = {Miyaoka, Yoichi},
     TITLE = {Abundance conjecture for {$3$}-folds: case {$\nu=1$}},
   JOURNAL = {Compositio Math.},
  FJOURNAL = {Compositio Mathematica},
    VOLUME = {68},
      YEAR = {1988},
    NUMBER = {2},
     PAGES = {203--220},
      ISSN = {0010-437X,1570-5846},
   MRCLASS = {14J30 (14J10)},
  MRNUMBER = {966580},
MRREVIEWER = {Peter\ Nielsen},
       URL = {http://www.numdam.org/item?id=CM_1988__68_2_203_0},
}

@article {Miy_min3fold,
    AUTHOR = {Miyaoka, Yoichi},
     TITLE = {On the {K}odaira dimension of minimal threefolds},
   JOURNAL = {Math. Ann.},
  FJOURNAL = {Mathematische Annalen},
    VOLUME = {281},
      YEAR = {1988},
    NUMBER = {2},
     PAGES = {325--332},
      ISSN = {0025-5831,1432-1807},
   MRCLASS = {14J10 (14E30 14J15 14J30)},
  MRNUMBER = {949837},
MRREVIEWER = {Atsushi\ Moriwaki},
       DOI = {10.1007/BF01458437},
       URL = {https://doi.org/10.1007/BF01458437},
}

@article {MP,
    AUTHOR = {Miranda, Rick and Persson, Ulf},
     TITLE = {On extremal rational elliptic surfaces},
   JOURNAL = {Math. Z.},
  FJOURNAL = {Mathematische Zeitschrift},
    VOLUME = {193},
      YEAR = {1986},
    NUMBER = {4},
     PAGES = {537--558},
      ISSN = {0025-5874,1432-1823},
   MRCLASS = {14J27 (14J26)},
  MRNUMBER = {867347},
MRREVIEWER = {David\ R.\ Morrison},
       DOI = {10.1007/BF01160474},
       URL = {https://doi.org/10.1007/BF01160474},
}

@article {Mori,
    AUTHOR = {Mori, Shigefumi},
     TITLE = {Flip theorem and the existence of minimal models for
              {$3$}-folds},
   JOURNAL = {J. Amer. Math. Soc.},
  FJOURNAL = {Journal of the American Mathematical Society},
    VOLUME = {1},
      YEAR = {1988},
    NUMBER = {1},
     PAGES = {117--253},
      ISSN = {0894-0347,1088-6834},
   MRCLASS = {14J30 (14E35)},
  MRNUMBER = {924704},
MRREVIEWER = {David\ R.\ Morrison},
       DOI = {10.2307/1990969},
       URL = {https://doi.org/10.2307/1990969},
}

@article {Maclane,
    AUTHOR = {Mac Lane, Saunders},
     TITLE = {Modular fields. {I}. {S}eparating transcendence bases},
   JOURNAL = {Duke Math. J.},
  FJOURNAL = {Duke Mathematical Journal},
    VOLUME = {5},
      YEAR = {1939},
    NUMBER = {2},
     PAGES = {372--393},
      ISSN = {0012-7094,1547-7398},
   MRCLASS = {99-04},
  MRNUMBER = {1546131},
       DOI = {10.1215/S0012-7094-39-00532-6},
       URL = {https://doi.org/10.1215/S0012-7094-39-00532-6},
}

@article {Nak,
    AUTHOR = {Nakayama, Noboru},
     TITLE = {Invariance of the plurigenera of algebraic varieties under
              minimal model conjectures},
   JOURNAL = {Topology},
  FJOURNAL = {Topology. An International Journal of Mathematics},
    VOLUME = {25},
      YEAR = {1986},
    NUMBER = {2},
     PAGES = {237--251},
      ISSN = {0040-9383},
   MRCLASS = {14J10 (14E30 32J15)},
  MRNUMBER = {837624},
MRREVIEWER = {Eckart Viehweg},
       DOI = {10.1016/0040-9383(86)90042-X},
       URL = {https://doi.org/10.1016/0040-9383(86)90042-X},
    }

@book {Nak_ZDA,
    AUTHOR = {Nakayama, Noboru},
     TITLE = {Zariski-decomposition and abundance},
    SERIES = {MSJ Memoirs},
    VOLUME = {14},
 PUBLISHER = {Mathematical Society of Japan, Tokyo},
      YEAR = {2004},
     PAGES = {xiv+277},
      ISBN = {4-931469-31-0},
   MRCLASS = {14C20 (14E15 14E30 14J10)},
  MRNUMBER = {2104208},
MRREVIEWER = {Tommaso\ De Fernex},
}

@article {Siu_IPGT,
    AUTHOR = {Siu, Yum-Tong},
     TITLE = {Invariance of plurigenera},
   JOURNAL = {Invent. Math.},
  FJOURNAL = {Inventiones Mathematicae},
    VOLUME = {134},
      YEAR = {1998},
    NUMBER = {3},
     PAGES = {661--673},
      ISSN = {0020-9910,1432-1297},
   MRCLASS = {32L10 (32G13 32J18)},
  MRNUMBER = {1660941},
MRREVIEWER = {J\'anos\ Koll\'ar},
       DOI = {10.1007/s002220050276},
       URL = {https://doi.org/10.1007/s002220050276},
}

@incollection {Siu_IPNGT,
    AUTHOR = {Siu, Yum-Tong},
     TITLE = {Extension of twisted pluricanonical sections with
              plurisubharmonic weight and invariance of semipositively
              twisted plurigenera for manifolds not necessarily of general
              type},
 BOOKTITLE = {Complex geometry ({G}\"ottingen, 2000)},
     PAGES = {223--277},
 PUBLISHER = {Springer, Berlin},
      YEAR = {2002},
      ISBN = {3-540-43259-0},
   MRCLASS = {32L10 (14J15 32G13 32J18)},
  MRNUMBER = {1922108},
MRREVIEWER = {Laurent\ Manivel},
}

@article {Siu_FGCRAM,
    AUTHOR = {Siu, Yum-Tong},
     TITLE = {Finite generation of canonical ring by analytic method},
   JOURNAL = {Sci. China Ser. A},
  FJOURNAL = {Science in China. Series A. Mathematics},
    VOLUME = {51},
      YEAR = {2008},
    NUMBER = {4},
     PAGES = {481--502},
      ISSN = {1006-9283,1862-2763},
   MRCLASS = {32J25 (14C30 32L10)},
  MRNUMBER = {2395400},
MRREVIEWER = {Kimio\ Miyajima},
       DOI = {10.1007/s11425-008-0073-4},
       URL = {https://doi.org/10.1007/s11425-008-0073-4},
}

@article {Smith,
    AUTHOR = {Smith, Karen E.},
     TITLE = {{$F$}-rational rings have rational singularities},
   JOURNAL = {Amer. J. Math.},
  FJOURNAL = {American Journal of Mathematics},
    VOLUME = {119},
      YEAR = {1997},
    NUMBER = {1},
     PAGES = {159--180},
      ISSN = {0002-9327,1080-6377},
   MRCLASS = {13A35 (13D45 13F40 14B05)},
  MRNUMBER = {1428062},
MRREVIEWER = {Ian\ M.\ Aberbach},
       URL =
              {http://muse.jhu.edu/journals/american_journal_of_mathematics/v119/119.1smith.pdf},
}

@misc{SP,
    shorthand    = {Stacks},
    author       = {The {Stacks Project Authors}},
    title        = {\textit{Stacks Project}},
    note = {\url{https://stacks.math.columbia.edu}},
    year         = {2018},
  }

@article {Suh,
    AUTHOR = {Suh, J.},
     TITLE = {Plurigenera of general type surfaces in mixed characteristic},
   JOURNAL = {Compos. Math.},
  FJOURNAL = {Compositio Mathematica},
    VOLUME = {144},
      YEAR = {2008},
    NUMBER = {5},
     PAGES = {1214--1226},
}

@article {Tan_MMPABU,
    AUTHOR = {Tanaka, Hiromu},
     TITLE = {Minimal models and abundance for positive characteristic log
              surfaces},
   JOURNAL = {Nagoya Math. J.},
  FJOURNAL = {Nagoya Mathematical Journal},
    VOLUME = {216},
      YEAR = {2014},
     PAGES = {1--70},
      ISSN = {0027-7630,2152-6842},
   MRCLASS = {14E30},
  MRNUMBER = {3319838},
MRREVIEWER = {Paul\ A.\ Hacking},
       DOI = {10.1215/00277630-2801646},
       URL = {https://doi.org/10.1215/00277630-2801646},
}

@article {Tot_failKVfano,
    AUTHOR = {Totaro, Burt},
     TITLE = {The failure of {K}odaira vanishing for {F}ano varieties, and
              terminal singularities that are not {C}ohen-{M}acaulay},
   JOURNAL = {J. Algebraic Geom.},
  FJOURNAL = {Journal of Algebraic Geometry},
    VOLUME = {28},
      YEAR = {2019},
    NUMBER = {4},
     PAGES = {751--771},
      ISSN = {1056-3911,1534-7486},
   MRCLASS = {14F17 (14B05 14E30 14G17 14J45)},
  MRNUMBER = {3994312},
MRREVIEWER = {Zhiwei\ Wang},
       DOI = {10.1090/jag/724},
       URL = {https://doi.org/10.1090/jag/724},
}

@misc{Tot_terminalnonCM3fold,
      title={Terminal 3-folds that are not Cohen-Macaulay}, 
      author={Burt Totaro},
      year={2024},
      eprint={2407.02608},
      archivePrefix={arXiv},
      primaryClass={math.AG},
      url={https://arxiv.org/abs/2407.02608}, 
}

@article {Wal_FG,
    AUTHOR = {Waldron, Joe},
     TITLE = {Finite generation of the log canonical ring for 3-folds in
              {${\rm char}\,p$}},
   JOURNAL = {Math. Res. Lett.},
  FJOURNAL = {Mathematical Research Letters},
    VOLUME = {24},
      YEAR = {2017},
    NUMBER = {3},
     PAGES = {933--946},
      ISSN = {1073-2780,1945-001X},
   MRCLASS = {14E30 (14G17)},
  MRNUMBER = {3696609},
MRREVIEWER = {Sho\ Tanimoto},
       DOI = {10.4310/MRL.2017.v24.n3.a14},
       URL = {https://doi.org/10.4310/MRL.2017.v24.n3.a14},
}

@article {Wal_LMMP,
    AUTHOR = {Waldron, Joe},
     TITLE = {The {LMMP} for log canonical 3-folds in characteristic
              {$p>5$}},
   JOURNAL = {Nagoya Math. J.},
  FJOURNAL = {Nagoya Mathematical Journal},
    VOLUME = {230},
      YEAR = {2018},
     PAGES = {48--71},
      ISSN = {0027-7630,2152-6842},
   MRCLASS = {14E30 (14G17 14J30)},
  MRNUMBER = {3798618},
MRREVIEWER = {Kenta\ Hashizume},
       DOI = {10.1017/nmj.2017.2},
       URL = {https://doi.org/10.1017/nmj.2017.2},
}

@article{Waldron,
  title={Mori fibre spaces for $3 $-folds over imperfect fields},
  author={Waldron, Joe},
  journal={arXiv preprint arXiv:2303.00615},
  year={2023}
}

@article {Wit_CBF,
    AUTHOR = {Witaszek, Jakub},
     TITLE = {On the canonical bundle formula and log abundance in positive
              characteristic},
   JOURNAL = {Math. Ann.},
  FJOURNAL = {Mathematische Annalen},
    VOLUME = {381},
      YEAR = {2021},
    NUMBER = {3-4},
     PAGES = {1309--1344},
      ISSN = {0025-5831,1432-1807},
   MRCLASS = {14E30 (14E05)},
  MRNUMBER = {4333416},
MRREVIEWER = {Tom\ Ducat},
       DOI = {10.1007/s00208-021-02231-5},
       URL = {https://doi.org/10.1007/s00208-021-02231-5},
}

@article {Wit_RelSA,
    AUTHOR = {Witaszek, Jakub},
     TITLE = {Relative semiampleness in mixed characteristic},
   JOURNAL = {Duke Math. J.},
  FJOURNAL = {Duke Mathematical Journal},
    VOLUME = {173},
      YEAR = {2024},
    NUMBER = {9},
     PAGES = {1631--1675},
      ISSN = {0012-7094,1547-7398},
   MRCLASS = {14C20 (14G45 14L30)},
  MRNUMBER = {4766840},
MRREVIEWER = {J\polhk edrzej\ Garnek},
       DOI = {10.1215/00127094-2023-0053},
       URL = {https://doi.org/10.1215/00127094-2023-0053},
}

@article {Xu_BPF,
    AUTHOR = {Xu, Chenyang},
     TITLE = {On the base-point-free theorem of 3-folds in positive
              characteristic},
   JOURNAL = {J. Inst. Math. Jussieu},
  FJOURNAL = {Journal of the Institute of Mathematics of Jussieu. JIMJ.
              Journal de l'Institut de Math\'{e}matiques de Jussieu},
    VOLUME = {14},
      YEAR = {2015},
    NUMBER = {3},
     PAGES = {577--588},
      ISSN = {1474-7480},
   MRCLASS = {14C20 (14E05 14G17)},
  MRNUMBER = {3352529},
MRREVIEWER = {Alexandr V. Pukhlikov},
       DOI = {10.1017/S1474748014000097},
       URL = {https://doi.org/10.1017/S1474748014000097},
}

@misc{XuZheng_Abu,
      title={Abundance for threefolds in positive characteristic when $\nu=2$}, 
      author={Zheng Xu},
      year={2023},
      eprint={2307.03938},
      archivePrefix={arXiv},
      primaryClass={math.AG},
      url={https://arxiv.org/abs/2307.03938}, 
}

@article {Yas,
    AUTHOR = {Yasuda, Takehiko},
     TITLE = {Discrepancies of {$p$}-cyclic quotient varieties},
   JOURNAL = {J. Math. Sci. Univ. Tokyo},
  FJOURNAL = {The University of Tokyo. Journal of Mathematical Sciences},
    VOLUME = {26},
      YEAR = {2019},
    NUMBER = {1},
     PAGES = {1--14},
      ISSN = {1340-5705},
   MRCLASS = {14B05 (14E16 14E30)},
  MRNUMBER = {3929517},
MRREVIEWER = {Ana\ Bravo},
}

@article {zhang,
    AUTHOR = {Zhang, Lei},
     TITLE = {Frobenius stable pluricanonical systems on threefolds of
              general type in positive characteristic},
   JOURNAL = {Algebra Number Theory},
  FJOURNAL = {Algebra \& Number Theory},
    VOLUME = {16},
      YEAR = {2022},
    NUMBER = {10},
     PAGES = {2339--2384},
      ISSN = {1937-0652,1944-7833},
   MRCLASS = {14E05 (14E30 14G17 14J30)},
  MRNUMBER = {4546491},
MRREVIEWER = {Guolei\ Zhong},
       DOI = {10.2140/ant.2022.16.2339},
       URL = {https://doi.org/10.2140/ant.2022.16.2339},
}

@article{EH,
  title={Invariance of certain plurigenera for surfaces in mixed characteristics},
  author={Egbert, Andrew and Hacon, Christopher D},
  journal={Nagoya Mathematical Journal},
  volume={243},
  pages={1--10},
  year={2021},
  publisher={Cambridge University Press}
}

@article {ejiri,
    AUTHOR = {Ejiri, Sho},
     TITLE = {When is the {A}lbanese morphism an algebraic fiber space in
              positive characteristic?},
   JOURNAL = {Manuscripta Math.},
  FJOURNAL = {Manuscripta Mathematica},
    VOLUME = {160},
      YEAR = {2019},
    NUMBER = {1-2},
     PAGES = {239--264},
      ISSN = {0025-2611},
   MRCLASS = {14D06 (14G17)},
  MRNUMBER = {3983395},
MRREVIEWER = {Chen Jiang},
       DOI = {10.1007/s00229-018-1056-6},
       URL = {https://doi.org/10.1007/s00229-018-1056-6},
}

@article {KawDCS,
    AUTHOR = {Kawamata, Yujiro},
     TITLE = {Deformations of canonical singularities},
   JOURNAL = {J. Amer. Math. Soc.},
  FJOURNAL = {Journal of the American Mathematical Society},
    VOLUME = {12},
      YEAR = {1999},
    NUMBER = {1},
     PAGES = {85--92},
      ISSN = {0894-0347},
   MRCLASS = {14B07 (14F17)},
  MRNUMBER = {1631527},
MRREVIEWER = {Gerhard Pfister},
       DOI = {10.1090/S0894-0347-99-00285-4},
       URL = {https://doi.org/10.1090/S0894-0347-99-00285-4},
}

@article{Kol3foldcharp,
  title={Families of stable 3-folds in positive characteristic},
  author={Koll{\'a}r, J{\'a}nos},
  journal={{\'E}pijournal de G{\'e}om{\'e}trie Alg{\'e}brique},
  volume={7},
  year={2023},
  publisher={Episciences. org}
}

\end{document}